\title{Enumeration formulas for generalized $q$-Euler numbers}
\author{Jang Soo Kim}
\email{kimjs@math.umn.edu}
\subjclass[2000]{05A19, 05A30, 05E35}
\keywords{Euler numbers, Touchard-Riordan's formula, continued fractions}
\newtheorem{thm}{Theorem}[section]
\newtheorem{lem}[thm]{Lemma}
\newtheorem{prop}[thm]{Proposition}
\newtheorem{cor}[thm]{Corollary}
\theoremstyle{definition}
\newtheorem{defn}{Definition}
\newtheorem{problem}{Problem}
\theoremstyle{remark}
\def\leftbrace(#1,#2)#3[#4]{
\rput(-.3,0){\rput(#1,#2){\psscaleboxto(.5,#3){\{}}}
\rput(-.8,0){\rput[r](#1,#2){#4}}
}
\def\rightbrace(#1,#2)#3[#4]{
\rput(.3,0){\rput(#1,#2){\psscaleboxto(.5,#3){\}}}}
\rput(.8,0){\rput[l](#1,#2){#4}}
}
\def\upbrace(#1,#2)#3[#4]{
\rput(0,.3){\rput(#1,#2){\psscaleboxto(#3,.5){\rotateleft{\}}}}}
\rput(0,.8){\rput[b](#1,#2){#4}}
}
\def\downbrace(#1,#2)#3[#4]{
\rput(0,-.3){\rput(#1,#2){\psscaleboxto(#3,.5){\rotateleft{\{}}}}
\rput(0,-.8){\rput[t](#1,#2){#4}}
}
\newcommand{\dist}{\operatorname{dist}}
\newcommand{\Qbinom}[3]{\genfrac{[}{]}{0pt}{}{#1}{#2}_{#3}}
\newcommand{\qbinom}[2]{\genfrac{[}{]}{0pt}{}{#1}{#2}_q}
\newcommand\flr[1]{\left\lfloor #1\right\rfloor}
\newcommand\Qint[2]{\left[ #1\right]_{#2}}
\newcommand\qint[1]{\left[ #1\right]_q}
\newcommand\A{\mathcal{A}}
\newcommand\B{\mathcal{B}}
\newcommand\U{\mathcal{U}}
\newcommand\V{\mathcal{V}}
\newcommand\D{\mathcal{D}}
\newcommand\wt{\operatorname{wt}}
\newcommand\ha{\operatorname{h}}
\newcommand\one{\mathbf{1}}
\newcommand\norm[1]{\lVert #1\rVert}
\newcommand\tot{\textrm{31-2}}
\newcommand\aep{\alpha_\epsilon}
\newcommand\bep{\beta_\epsilon}
\def\dk{\delta_k}
\def\dkc.{$\delta_k$-configuration}
\def\dkpc.{$\delta_k^+$-configuration}
\def\dkmc.{$\delta_{k}^-$-configuration}
\def\Dp#1{\Delta_{#1}^+}
\def\Dm#1{\Delta_{#1}^-}
\def\ywt.{$(y,q)$-weight}
\newcommand\MD{\overline{\mathcal{D}}}
\newcommand\sop{\mathcal{SOP}}
\newcommand\tr{\mathrm{tr}}
\newcommand\mk{\mathrm{mark}}
\newcommand\diag{\mathrm{diag}}
\def\JV.{Josuat-Verg\`es}
\def\sch.{Schr\"oder}
\def\cell(#1,#2)[#3]{
\ax=#2 \ay=#1
\multiply\ay by-1
\bx=\ax \by=\ay
\cx=\ax \cy=\ay
\dx=\ax \dy=\ay
\advance\bx by-1
\advance\dy by1
\advance\cx by-1
\advance\cy by1
\psline (\dx,\dy)(\ax,\ay)(\bx,\by)
\rput(\number\cx.5,
\ifnum\cy=0 -0.5\else\number\cy.5\fi){#3}
}
\def\mcell(#1,#2){
\rput(-.5,.5){\pscircle(#2,-#1){.2}}
}
\def\smcell(#1,#2){
\rput(-.5,.5){\pscircle*(#2,-#1){.2}}
}
\def\gcell(#1,#2)[#3]{
\ax=#2 \ay=#1
\multiply\ay by-1
\bx=\ax \by=\ay
\cx=\ax \cy=\ay
\dx=\ax \dy=\ay
\advance\bx by-1
\advance\dy by1
\advance\cx by-1
\advance\cy by1
\psframe[linestyle=none,fillstyle=solid,fillcolor=gray!40!white](\ax,\ay)(\cx,\cy)
\rput(\number\cx.5,
\ifnum\cy=0 -0.5\else\number\cy.5\fi){#3}
}
\def\Gcell(#1,#2)[#3]{
\gcell(#1,#2)[#3] \cell(#1,#2)[#3]
}
\def\psrow(#1,#2){\multido{\i=1+1}{#2}{\cell(#1,\i)[]}}
\def\psdk#1{
\psline(#1,0)(0,0)(0,-#1)
\multido{\n=1+1}{#1}{
  \ax=#1 \advance\ax by 1
  \advance\ax by -\n
  \psrow(\n,\ax)}
}
\def\rowseg#1[#2]{\rput(0,-#1){\rput(0,.5){
      \psline[linewidth=1.5pt](0,0)(#2,0)}}}
\def\rowarr#1[#2]{\rput(0,-#1){\rput(0,.5){
      \psline[linewidth=1.5pt,arrowsize=.5, arrowlength=.6]{->}(0,0)(#2,0)}}}
\def\colseg#1[#2]{\rput(#1,0){\rput(-.5,0){
      \psline[linewidth=1.5pt](0,0)(0,-#2)}}}
\def\colarr#1[#2]{\rput(#1,0){\rput(-.5,0){
      \psline[linewidth=1.5pt,arrowsize=.5, arrowlength=.6]{->}(0,0)(0,-#2)}}}
\def\harrow(#1,#2)[#3]{\rput(#2,-#1){\rput(-1,.5){
      \psline[linewidth=1.5pt,arrowsize=.5, arrowlength=.6]{->}(0,0)(#3,0)}}}
\def\varrow(#1,#2)[#3]{\rput(#2,-#1){\rput(-.5,1){
      \psline[linewidth=1.5pt,arrowsize=.5, arrowlength=.6]{->}(0,0)(0,-#3)}}}
\def\hzero(#1,#2){\rput(#2,-#1){\rput(-1,.5){\psarc*(0,0){.2}{-90}{90}}}}
\def\vzero(#1,#2){\rput(#2,-#1){\rput(-.5,1){\psarc*(0,0){.2}{180}{0}}}}
\def\hzerow(#1,#2){\rput(#2,-#1){\rput(-1,.5){\psarc(0,0){.2}{-90}{90}}}}
\def\vzerow(#1,#2){\rput(#2,-#1){\rput(-.5,1){\psarc(0,0){.2}{180}{0}}}}
\def\UP(#1,#2)[#3]{\rput(#1,#2){\psline(0,0)(1,1) 
}}
\def\DW(#1,#2)[#3]{\rput(#1,#2){\psline(0,0)(1,-1) 
}}
\def\MUP(#1,#2)[#3]{\rput(#1,#2){\psline[linewidth=2pt](0,0)(1,1)
}}
\def\MDW(#1,#2)[#3]{\rput(#1,#2){\psline[linewidth=2pt](0,0)(1,-1)
}}
\begin{document}

\begin{abstract}
  We find an enumeration formula for a $(t,q)$-Euler number which is a
  generalization of the $q$-Euler number introduced by Han, Randrianarivony, and
  Zeng. We also give a combinatorial expression for the $(t,q)$-Euler number and
  find another formula when $t=\pm q^r$ for any integer $r$. Special cases of
  our latter formula include the formula of the $q$-Euler number recently found
  by Josuat-Verg\`es and Touchard-Riordan's formula.
\end{abstract}

\maketitle

\section{Introduction}
\label{sec:introduction}

The \emph{Euler number} $E_n$ is defined by 
\[ 
\sum_{n\geq0}E_n\frac{x^n}{n!} = \sec x + \tan x.
\] 
Thus $E_{2n}$ and $E_{2n+1}$ are also called the \emph{secant number} and the
\emph{tangent number} respectively. In 1879, Andr\'e \cite{Andre1879} showed
that $E_{n}$ is equal to the number of \emph{alternating permutations} of
$\{1,2,\ldots,n\}$, i.e., the permutations $\pi=\pi_1\ldots \pi_{n}$ such that
$\pi_1<\pi_2 > \pi_3 < \cdots$.

There are several $q$-Euler numbers studied in the literature, for instance, see
\cite{Foata2010, Han1999, Huber2010, JV2010, Shin2010}. In this paper we
consider the following $q$-Euler number $E_n(q)$ introduced by Han et
al. \cite{Han1999}:
\begin{equation}
  \label{eq:2}
  \sum_{n\geq0} E_{2n}(q) x^n = \cfrac{1}{
    1- \cfrac{\qint{1}^2 x}{
       1- \cfrac{\qint{2}^2 x}{ \cdots}}}, \qquad
 \sum_{n\geq0} E_{2n+1}(q) x^n = \cfrac{1}{
    1- \cfrac{\qint{1}\qint2 x}{
       1- \cfrac{\qint{2}\qint3 x}{ \cdots}}},
\end{equation}
where $\qint{n}=(1-q^n)/(1-q)$. We will use the standard notations:
\[
(a;q)_n = (1-a)(1-aq)\cdots(1-aq^{n-1}), \qquad
\qbinom{n}{k} = \frac{(q;q)_n}{(q;q)_{k} (q;q)_{n-k}}.
\] 
 This $q$-Euler number also has a nice
combinatorial expression found by Chebikin \cite{Chebikin2008}:
\[E_n(q) = \sum_{\pi\in \mathfrak{A}_n} q^{\tot(\pi)},\] where $\mathfrak{A}_n$
denotes the set of alternating permutations of $\{1,2,\ldots,n\}$ and
$\tot(\pi)$ denotes the number of $\tot$ patterns in $\pi$.  

Recently, \JV. \cite{JV2010} found a formula for $E_n(q)$. In
Section~\ref{sec:jv.-orig-form} we show that, by elementary manipulations, his
formula can be rewritten as follows:
 \begin{align}
E_{2n}(q) &= \frac{1}{(1-q)^{2n}} 
\sum_{k=0}^n \left( \binom{2n}{n-k} - \binom{2n}{n-k-1} \right)
 q^{k(k+1)}\sum_{i=-k}^{k} (-q)^{-i^2},   \label{eq:seq}\\
 E_{2n+1}(q) &= \frac{1}{(1-q)^{2n}} 
 \sum_{k=0}^n \left( \binom{2n}{n-k} - \binom{2n}{n-k-1} \right)
 q^{k(k+2)} A_k(q^{-1}),
\label{eq:tan}
 \end{align}
where $A_0(q)=1$ and for $k\geq1$,
\[
A_k(q) =  \frac{1}{1-q} \sum_{i=-k}^{k}(-q)^{i^2}
+ \frac{q^{2k+1}}{1-q} \sum_{i=-(k-1)}^{k-1}(-q)^{i^2}.
\]
Shin and Zeng \cite[Theorem~12]{Shin2010} found a parity-independent formula for
$E_n(q)$.

 We note that \eqref{eq:seq} is similar to the following formula of Touchard
 \cite{Touchard1952} and Riordan \cite{Riordan1975}:
\begin{equation}
  \label{eq:TR}
d_n = \frac{1}{(1-q)^n}
  \sum_{k=0}^n \left( \binom{2n}{n-k} - \binom{2n}{n-k-1} \right)
  (-1)^k q^{\frac{k(k+1)}2},
\end{equation}
where $d_n$ is defined by
\begin{equation}
  \label{eq:3}
  \sum_{n\geq0} d_n x^n = \cfrac{1}{
    1- \cfrac{\qint{1} x}{
       1- \cfrac{\qint{2} x}{ \cdots}}}.
\end{equation}

In this paper we introduce the \emph{$(t,q)$-Euler numbers} $E_n(t,q)$ defined
by
\begin{equation}
  \label{eq:Etq}
 \sum_{n\geq0} E_{n}(t,q) x^n = \cfrac{1}{
    1- \cfrac{\qint{1}\Qint{1}{t,q} x}{
       1- \cfrac{\qint{2}\Qint{2}{t,q} x}{\cdots}}},
\end{equation}
where $\Qint{n}{t,q} = (1-tq^n)/(1-q)$. Note that $(1-q)^{2n} E_n(0,q)= (1-\sqrt
q)^{2n} E_n(-1,\sqrt q)=(1-q)^n d_n$, $E_n(1,q)=E_{2n}(q)$, and
$E_n(q,q)=E_{2n+1}(q)$.  In fact $E_n(t,q)$ is a special case of the $2n$th
moment $\mu_{2n}(a,b;q)$ of Al-Salam-Chihara polynomials $Q_n(x)$ defined by the
recurrence
\[
2xQ_n(x) = Q_{n+1} + (a+b)q^n Q_n(x) + (1-q^n)(1-abq^{n-1}) Q_{n-1}(x),
\]
and the initial conditions $Q_{-1}(x)=0$ and $Q_0(x)=1$.  If $a=\sqrt{-qt}$ and
$b=-\sqrt{-qt}$, then the $2n$th moment $\mu_{2n}(a,b;q)$ satisfies $(1-q)^{2n}
E_n(t,q) = 2^{2n} \mu_{2n}(\sqrt{-qt}, -\sqrt{-qt};q)$.
\JV. \cite[Theorem~6.1.1 or Equation~46]{JV_PASEP} found a formula for
$\mu_n(a,b;q)$, which implies that
\begin{equation}
  \label{eq:JV_mu}
E_n(t,q) = \frac{1}{(1-q)^{2n}} \sum_{k=0}^n 
\left( \binom{2n}{n-k}-\binom{2n}{n-k-1} \right) 
\sum_{i,j\ge0} (-1)^{k+i}  q^{\binom{j+1}2} (qt)^{k-j} 
\qbinom{2k-j}{j} \qbinom{2k-2j}{i}. 
\end{equation}
In the same paper, \JV. showed that \eqref{eq:seq} and \eqref{eq:tan} can be
obtained from \eqref{eq:JV_mu} using certain summation formulas.  

The original motivation of this paper is to find a formula from which one can
easily obtain \eqref{eq:seq}, \eqref{eq:tan}, and \eqref{eq:TR}.  The main
results in this paper are Theorems~\ref{thm:main2} and \ref{thm:int} below.

\begin{thm}\label{thm:main2}
We have
\[ E_n(t,q) = \frac{1}{(1-q)^{2n}} \sum_{k=0}^n \left( \binom{2n}{n-k} -
  \binom{2n}{n-k-1} \right) t^k q^{k(k+1)} T_k(t^{-1}, q^{-1}),\] where
$\{T_k(t,q)\}_{k\ge0}$ is a family of polynomials in $t$ and $q$ determined
uniquely by the recurrence relation: $T_0(t,q)=1$ and for $k\geq1$,
\begin{equation}
  \label{eq:rec_Tk}
T_k(t,q) = T_{k-1}(t,q) + (1+t)(-q)^{k^2} + 
(1-t^2) \sum_{i=1}^{k-1} (-q)^{k^2-i^2} T_{i-1}(t,q).
\end{equation}
\end{thm}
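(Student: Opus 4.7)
The plan is to deduce the theorem from \JV.'s formula \eqref{eq:JV_mu}, which already expresses $E_n(t,q)$ in exactly the same outer shape as Theorem~\ref{thm:main2}, namely
\[E_n(t,q)=\frac{1}{(1-q)^{2n}}\sum_{k=0}^n\left(\binom{2n}{n-k}-\binom{2n}{n-k-1}\right)K_k(t,q),\]
where
\[K_k(t,q):=\sum_{i,j\ge0}(-1)^{k+i}q^{\binom{j+1}{2}}(qt)^{k-j}\qbinom{2k-j}{j}\qbinom{2k-2j}{i}.\]
Since the coefficients $M_{n,k}=\binom{2n}{n-k}-\binom{2n}{n-k-1}$ vanish for $k>n$ and satisfy $M_{n,n}=1$, the $K_k$'s are uniquely determined by the $E_n$'s; hence the theorem reduces to the single inner identity $K_k(t,q)=t^{k}q^{k(k+1)}T_k(t^{-1},q^{-1})$ for every $k\ge0$.

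The first step is to collapse the $i$-sum using the alternating $q$-binomial identity $\sum_{i\ge0}(-1)^i\qbinom{m}{i}=(q;q^2)_{m/2}$ for $m$ even and $0$ for $m$ odd. Since $2k-2j$ is always even, this reduces the kernel to
\[K_k(t,q)=(-1)^k\sum_{j=0}^{k}q^{\binom{j+1}{2}}(qt)^{k-j}\qbinom{2k-j}{j}(q;q^2)_{k-j}.\]
The second step is to translate the target: setting $\widetilde T_k(t,q):=t^{k}q^{k(k+1)}T_k(t^{-1},q^{-1})$, substituting $(t,q)\mapsto(t^{-1},q^{-1})$ in \eqref{eq:rec_Tk}, multiplying by $t^{k}q^{k(k+1)}$, and using $T_{i-1}(t^{-1},q^{-1})=t^{-(i-1)}q^{-(i-1)i}\widetilde T_{i-1}$ inside the sum, the recurrence becomes (for $k\ge 1$, with $\widetilde T_0=1$)
\[\widetilde T_k=tq^{2k}\widetilde T_{k-1}+(-1)^{k}(t^{k}+t^{k-1})q^{k}+\sum_{i=1}^{k-1}(-1)^{k-i}(t^{k-i+1}-t^{k-i-1})q^{k+i}\widetilde T_{i-1}.\]

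It then suffices to verify by strong induction on $k$ that the simplified $K_k(t,q)$ satisfies exactly this recurrence; the base case $K_0=1$ is immediate. For the inductive step I would peel off the $j=k$ term of $K_k$, which contributes the constant $(-1)^{k}q^{k}$, then apply the $q$-Pascal rule $\qbinom{2k-j}{j}=\qbinom{2k-j-1}{j}+q^{2k-2j}\qbinom{2k-j-1}{j-1}$ to the remaining sum and the factorization $(q;q^2)_{k-j}=(1-q^{2(k-j)-1})(q;q^2)_{k-j-1}$ to produce the $(1-t^{2})$-type weights appearing in the transformed recurrence. After re-indexing and regrouping, the various pieces should reassemble into $tq^{2k}K_{k-1}$, the remaining $(-1)^{k}t^{k-1}q^{k}$ contribution, and the sum $\sum_{i=1}^{k-1}(-1)^{k-i}(t^{k-i+1}-t^{k-i-1})q^{k+i}K_{i-1}$. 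The main obstacle is the bookkeeping at this final step: matching signs, $q$-powers, and summation limits exactly. I expect no new ideas to be needed beyond routine $q$-binomial manipulation, and the cases $k=1,2$ (already consistent by direct computation) serve as sanity checks to fix the correct grouping.
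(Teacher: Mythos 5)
Your route is genuinely different from the paper's. The paper never invokes \eqref{eq:JV_mu}: it defines $T_k(t,q)$ as a weighted sum over \dkpc.s, so that the displayed formula for $E_n(t,q)$ holds essentially by construction (via Lemmas~\ref{lem1} and \ref{lem2} and the weight identity \eqref{eq:uv2sum}), and then obtains the recurrence \eqref{eq:rec_Tk} from a sign-reversing involution on $\Dm{k}\setminus\Dp{k-1}$ (Lemma~\ref{lem:rec}). You instead take the external result \eqref{eq:JV_mu} as the starting point and reduce the theorem to a kernel identity. The parts of your argument that are actually carried out are correct: the triangularity of $M_{n,k}=\binom{2n}{n-k}-\binom{2n}{n-k-1}$ does reduce the theorem to $K_k=t^kq^{k(k+1)}T_k(t^{-1},q^{-1})$; Gauss's identity does collapse the $i$-sum to $(q;q^2)_{k-j}$; and your transformed recurrence for $\widetilde T_k$ is exactly right (I verified the coefficients $tq^{2k}$, $(-1)^k(t^k+t^{k-1})q^k$ and $(-1)^{k-i}(t^{k-i+1}-t^{k-i-1})q^{k+i}$), as are the checks for $k=1,2$.

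However, the decisive step --- showing that $K_k=(-1)^k\sum_{j=0}^k q^{\binom{j+1}{2}}(qt)^{k-j}\qbinom{2k-j}{j}(q;q^2)_{k-j}$ satisfies that recurrence --- is not carried out, and the one concrete detail you supply is wrong: the $j=k$ term of $K_k$ is $(-1)^kq^{\binom{k+1}{2}}$, not $(-1)^kq^k$, so it cannot by itself produce the inhomogeneous term $(-1)^k(t^k+t^{k-1})q^k$, and the proposed single application of $q$-Pascal plus the factorization of $(q;q^2)_{k-j}$ will not by itself generate the full-history terms $q^{k+i}K_{i-1}$ for every $i$; a genuine telescoping argument is needed and you have not exhibited it. This is the heart of the proof and remains a gap. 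A concrete way to close it within your framework: \eqref{eq:rec_Tk} is equivalent (subtract $(-q)^{2k-1}$ times its instance at $k-1$ from its instance at $k$) to the three-term recurrence $T_k=(1-q^{2k-1})T_{k-1}+t^2q^{2k-1}T_{k-2}$ for $k\ge2$, which after your substitution reads $\widetilde T_k=t(q^{2k}-q)\widetilde T_{k-1}+q^{2k-1}\widetilde T_{k-2}$. Verifying that $K_k$ satisfies this two-step recurrence is a bounded contiguous-relation computation on a single $j$-sum and is the missing piece you would need to supply; as written, the proposal stops short of a proof.
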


From the recurrence of $T_k(t,q)$, we immediately get $T_k(-1,q)=1$ and
$T_k(1,q)=\sum_{i=-k}^k (-q)^{i^2}$, which imply \eqref{eq:TR} and
\eqref{eq:seq} respectively. Using certain weighted lattice paths satisfying the
same recurrence relation we obtain the following formula for $T_k(t,q)$.

\begin{cor}\label{cor:Tk}
We have
\begin{align*}
T_k(t,q)=\sum_{j=0}^k \sum_{i=0}^j (-1)^{j+i}
t^{2i} q^{j^2+i^2+i} \Qbinom{k-j}{i}{q^2}
\left( \Qbinom{k-i}{j-i}{q^2}
+ t \Qbinom{k-i-1}{j-i-1}{q^2}
\right).
\end{align*}
\end{cor}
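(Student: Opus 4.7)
The plan is to enumerate a suitable family of weighted lattice paths in two different ways: once by a first-return decomposition that yields the defining recurrence \eqref{eq:rec_Tk}, and once by a global count that yields the claimed double sum. Since $T_0=1$ together with \eqref{eq:rec_Tk} determines $\{T_k(t,q)\}$ uniquely, matching both counts to the same weighted set of paths gives the corollary.

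For the recursive step I would set up a path model $\mathcal{P}_k$---for example, Motzkin-type prefixes of length bounded by $2k$ with height capped at $k$, with up- and down-steps at height $h$ carrying a signed weight $\pm q^h$ so that paired up/down steps contribute $1-t^2$ or $1+t$ depending on position. The first-return-to-the-axis decomposition should then produce exactly three contributions: $T_{k-1}(t,q)$ from paths that never touch height $k$, $(1+t)(-q)^{k^2}$ from a single saturated path, and $(1-t^2)(-q)^{k^2-i^2}T_{i-1}(t,q)$ from paths whose first visit to height $k$ occurs at step $i$. The elementary identity $\sum_{h=i+1}^{k}(2h-1)=k^2-i^2$ delivers the quadratic exponents, while the alternating $\pm$ sign produces the $(-q)$ factors.

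For the global count, I would classify the same paths by two statistics $(i,j)$ counting distinguished horizontal and up-steps; the well-known interpretation of $\Qbinom{m}{r}{q^2}$ as the $q^2$-area generating function of lattice paths in an $(m-r)\times r$ rectangle would explain the two binomials $\Qbinom{k-j}{i}{q^2}$ and $\Qbinom{k-i}{j-i}{q^2}$ (two disjoint rectangular regions swept by the path), the skeleton weight $q^{j^2+i^2+i}$, and the binary factor $\Qbinom{k-i}{j-i}{q^2}+t\,\Qbinom{k-i-1}{j-i-1}{q^2}$ (a single $\{0,1\}$ decoration at the top of the path). The main obstacle is calibrating the path weights so that both computations land on exactly the recurrence and exactly the closed form without introducing spurious terms; in particular, the signs $(-1)^{j+i}$ must emerge naturally from the $\pm q^h$ weights rather than from a separate inclusion-exclusion, and the coefficient $(1-t^2)$ in \eqref{eq:rec_Tk}---rather than $1-t$ or $1+t$---has to arise from two independent binary choices per returning path. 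As a safety net, one can instead prove by induction on $k$ that the double sum satisfies \eqref{eq:rec_Tk}, peeling off the $j=k$ term with the $q^2$-Pascal relation $\Qbinom{n}{r}{q^2}=\Qbinom{n-1}{r}{q^2}+q^{2(n-r)}\Qbinom{n-1}{r-1}{q^2}$ and swapping the order of summation---this is pure bookkeeping, but the cancellations among signed double sums are the most error-prone step.
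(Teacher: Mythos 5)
Your overall skeleton---introduce a weighted lattice-path family, check that it satisfies the recurrence \eqref{eq:rec_Tk} by a first-step decomposition, and then evaluate the same weighted sum in closed form---is exactly the strategy of the paper, and the first half of your plan is sound: the paper's model consists of paths from $(k,0)$ to $(0,-j)$ made of west and southwest steps, weighted by $(-1)^j q^{A(R)}(1-t^2)^s V$, and decomposing by the position of the first west step yields precisely the three contributions $T_{k-1}$, $(1+t)(-q)^{k^2}$, and $(1-t^2)(-q)^{k^2-i^2}T_{i-1}$ that you predict. (You do not need signed step weights $\pm q^h$ to make $(1-t^2)$ ``emerge naturally''; it is cleaner to attach $(1-t^2)$ directly to each southwest step immediately followed by a west step, and $(1+t)$ to a terminal southwest step, as the paper does.)

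The genuine gap is in the second half. Once the region cut out by a path is identified with a partition $\lambda\subset B(j,k-j)$, the sum becomes
\[
T_k=\sum_{j=0}^k(-q)^{j^2}\Bigl(\sum_{\lambda\subset B(j,k-j)}q^{2|\lambda|}(1-t^2)^{\dist(\lambda)}+\sum_{\lambda\subset B(j-1,k-j)}t\,q^{2|\lambda|}(1-t^2)^{\dist(\lambda)}\Bigr),
\]
where $\dist(\lambda)$ is the number of distinct parts (equivalently, inner corners) of $\lambda$. The entire content of the corollary is then the identity
\[
\sum_{\lambda\subset B(m,n)}x^{\dist(\lambda)}q^{|\lambda|}=\sum_{i=0}^m q^{\binom{i+1}{2}}\qbinom{n}{i}\qbinom{n+m-i}{m-i}(x-1)^i,
\]
applied with $x=1-t^2$ and $q$ replaced by $q^2$, so that $(x-1)^i=(-t^2)^i$ produces the factor $(-1)^i t^{2i}$ and the two $q^2$-binomials of the statement. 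Your ``global count'' does not identify this identity: the two binomials do not arise from two disjoint rectangular regions swept by a single path, but from expanding $\prod_{i=1}^n\bigl(1+y(x-1)q^i\bigr)$ by the $q$-binomial theorem and then truncating $\prod_{j=1}^n(1-yq^j)^{-1}$ at degree $m-i$ in $y$. Without this lemma (or a worked-out equivalent), neither your primary plan nor your safety net is actually executed; the safety net in particular reduces to verifying that the signed double sum satisfies \eqref{eq:rec_Tk}, which is precisely the error-prone computation you flag but do not perform.
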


As a consequence of the proof of Corollary~\ref{cor:Tk} we can express
$T_k(t,q)$ using what we call self-conjugate overpartitions, see
Theorem~\ref{thm:sop}.  This combinatorial expression allows us to find a
functional equation for $T_k(t,q)$ which gives a recurrence relation for
$T_k(\pm q^r,q)$, see Corollary~\ref{cor:rec_T}. Solving the recurrence
relation, we get the following formulas for $T_n(\pm q^r,q)$ for any integer
$r$.

\begin{thm}\label{thm:int}
 For $b\geq0$ and $k\geq1$, we have
\begin{align}
T_k(q^b,q) &= 
\sum_{i=0}^{k-1} \frac{q^{i(2k+1)}}{(q;q)_b} \Qbinom{b}{i}{q^2} 
\sum_{j=-(k-i)}^{k-i} (-q)^{j^2}
+\sum_{i=0}^{b-1}\frac{(q;q)_i}{(q;q)_b} q^{k(2k+2i+1)} 
\Qbinom{b-i-1}{k-1}{q^2}, \label{eq:++b} \\
T_k(-q^b,q) &= 
\sum_{i=0}^{k-1} \frac{q^{i(2k+1)}}{(-q;q)_b} \Qbinom{b}{i}{q^2} 
+\sum_{i=0}^{b-1}\frac{(-q;q)_i}{(-q;q)_b} q^{k(2k+2i+1)} 
\Qbinom{b-i-1}{k-1}{q^2}, \label{eq:-+b}
\end{align}
and for $b\geq1$ and $k\geq0$, we have
\begin{align}
T_k(q^{-b},q) &= \sum_{i=0}^{b-1} (q^{1-b};q)_i (-q)^{k(k-2b+2)+2i} 
\Qbinom{k+i-1}{i}{q^2}, \label{eq:+-b}\\
T_k(-q^{-b},q) &= 
\sum_{i=0}^{k-1} (-q^{1-b};q)_b (-q)^{i(2k-2b-i+2)}
\Qbinom{b+i-1}{i}{q^2} \notag\\
&\quad +(-q)^{k^2+2k-2kb} \sum_{i=0}^{b-1} 
(-q^{1-b};q)_i q^{2i} \Qbinom{k+i-1}{i}{q^2}. \label{eq:--b}
\end{align}
\end{thm}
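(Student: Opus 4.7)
The plan is to apply the recurrence relation from Corollary~\ref{cor:rec_T} and prove each of the four identities by induction on $b$. Because the functional equation for $T_k(t,q)$ obtained from the self-conjugate overpartition interpretation expresses $T_k(\pm q^b,q)$ in terms of $T_k$ at a shifted argument, the natural induction parameter is $b$, with $k$ held fixed. The four cases are parallel in structure, differing only by the sign of $t$ and by the direction (positive versus negative) of the exponent.

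For the base cases $b=0$ in \eqref{eq:++b} and \eqref{eq:-+b}, only the $i=0$ summand in the first sum survives, since $\Qbinom{0}{i}{q^2}$ vanishes for $i>0$, and the second sum is empty. This recovers $T_k(1,q)=\sum_{j=-k}^{k}(-q)^{j^2}$ and $T_k(-1,q)=1$, which are the values already noted in the paper immediately after Theorem~\ref{thm:main2}. For the negative-exponent formulas \eqref{eq:+-b} and \eqref{eq:--b}, I would take $b=1$ as the base case, where the expressions collapse to a single geometric-like sum that can be verified directly against the recurrence.

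For the inductive step, I would substitute the conjectured formula at $b$ into the recurrence from Corollary~\ref{cor:rec_T} and show that the output equals the conjectured formula at $b+1$ (or $b-1$, depending on the direction of the induction). The manipulation involves splitting each of the two sums according to whether the newly produced inhomogeneous term contributes, reindexing, and then reassembling using standard $q$-Pascal identities such as $\Qbinom{n}{k}{q^2}=\Qbinom{n-1}{k}{q^2}+q^{2(n-k)}\Qbinom{n-1}{k-1}{q^2}$. The two-part structure of each formula mirrors this decomposition: one sum represents the homogeneous accumulation from the initial value, while the other records the running contribution from the inhomogeneous term of the recurrence.

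The main obstacle is the bookkeeping of the prefactors $(q;q)_i/(q;q)_b$, $(-q;q)_i/(-q;q)_b$, and $(q^{1-b};q)_i$, which transform nontrivially under $b\mapsto b\pm 1$. In the negative-exponent cases especially, the factor $(\pm q^{1-b};q)_i$ couples with the shift $q^{-b}\mapsto q^{-(b-1)}$ so that the boundary term $i=b-1$ of the second sum must be matched against the new contribution produced when applying the recurrence. Once this boundary accounting is arranged correctly, what remains is a routine, if somewhat lengthy, simplification using elementary identities for $q$-Pochhammer symbols and Gaussian binomials, and the four cases are then discharged in turn.
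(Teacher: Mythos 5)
Your overall strategy---derive $b$-recurrences from Corollary~\ref{cor:rec_T} and then verify the stated closed forms against them---is workable in principle, but the induction scheme as you state it does not close. You propose ``induction on $b$, with $k$ held fixed,'' yet Corollary~\ref{cor:rec_T} reads $(1-tq)T_k(tq,q)=T_k(t,q)+t^2q^{2k+1}T_{k-1}(t,q)$: the right-hand side involves $T_{k-1}$, so the inductive step for $T_k(\epsilon q^b,q)$ needs the formula for $T_{k-1}(\epsilon q^{b-1},q)$ as well, which a fixed-$k$ hypothesis does not supply. The situation is worse in the negative-exponent cases: substituting $t=\epsilon q^{-b}$ gives $T_k(\epsilon q^{-b},q)=(1-\epsilon q^{1-b})T_k(\epsilon q^{-(b-1)},q)-q^{2k-2b+1}T_{k-1}(\epsilon q^{-b},q)$, where the unknown $T_{k-1}$ value sits at the \emph{same} $b$; there the relation is really a recurrence in $k$, and your proposed $b=1$ ``base case'' $T_k(q^{-1},q)=(-q)^{k^2}$ is itself only obtainable by an induction on $k$. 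A nested induction (outer on $k$, inner on $b$, or a single induction on $b+k$) repairs this, and with that repair the $q$-Pascal bookkeeping you describe does go through for \eqref{eq:++b} and \eqref{eq:-+b}; but as written the plan omits the essential fact that both indices move.

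For comparison, the paper does not guess-and-verify. It records the same two recurrences (Lemmas~\ref{lem:aep} and \ref{lem:bep}) and then \emph{solves} them by iterating down to the boundary $\{mn=0\}$: $\aep(b,k)$ (resp.~$\bep(b,k)$) is expanded as a sum over weighted lattice paths from $(b,k)$ to boundary points, and each path sum is evaluated in closed form as a $q^2$-binomial coefficient by identifying the region cut out by the path with a partition in a box (Lemmas~\ref{lem:(0,n)} and \ref{lem:(m,0)} and their primed analogues). This yields all four formulas uniformly from the initial data $\aep(m,0)=\bep(m,0)=1$ and $\aep(0,n)=\bep(0,n)=T_n(\epsilon,q)$, and it explains \emph{why} each answer splits into two sums (paths ending on the $y$-axis versus on the $x$-axis)---the structural feature you correctly observed but would otherwise have to reconstruct by hand in the verification route.
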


Note that \eqref{eq:seq} and \eqref{eq:tan} follows immediately from
\eqref{eq:++b} when $b=0$ and $b=1$, and \eqref{eq:TR} from \eqref{eq:-+b} when
$b=0$.  When $t=-q$ and $t=-1/q$, we get simple formulas, see
Propositions~\ref{thm:-q} and \ref{thm:-1/q}. 

We note that it is possible to obtain another formula for $T_k(q^b,q)$ for a
positive integer $b$ from a result in \cite[Section~6]{Prodinger}, see
Section~\ref{sec:concluding-remarks}.

The rest of this paper is organized as follows. In
Section~\ref{sec:reformulation} we interprete $E_n(t,q)$ using \dkc.s introduced
in \cite{JVKim}. In Section~\ref{sec:proofs} we prove Theorem~\ref{thm:main2}
and Corollary~\ref{cor:Tk}. In Section~\ref{sec:self-conj-overp} we show that
$T_k(t,q)$ can be expressed as the sum of certain weights of symmetric
overpartitions. Using this expression we also find a functional equation for
$T_k(t,q)$. In Section~\ref{sec:another_formula} using the functional equation
obtained in the previous section we prove Theorem~\ref{thm:int} which is divided
into Corollaries~\ref{cor:(b,k)+}, \ref{cor:(b,k)-}, \ref{cor:(-b,k)+}, and
\ref{cor:(-b,k)-}. In Section~\ref{sec:jv.-orig-form} we show that the original
formula of $E_n(q)$ in \cite{JV2010} is equivalent to \eqref{eq:seq} and
\eqref{eq:tan}. In Section~\ref{sec:concluding-remarks} we propose some open
problems.

\section{Interpretation of $E_n(t,q)$ using \dkc.s}
\label{sec:reformulation}

In this section we interpretate $E_n(t,q)$ using \dkc.s introduced in
\cite{JVKim}. The idea is basically the same as in \cite{JVKim}.

\subsection{S-fractions and weighted lattice paths}

An \emph{S-fraction} is a continued fraction of the following form:
\[
\cfrac{1}{
  1- \cfrac{c_1 x}{
    1- \cfrac{c_2 x}{\cdots}}}.
\]
Thus all continued fractions appeared in the introduction are S-fractions. There
is a simple combinatorial interpretation for S-fractions using weighted Dyck
paths. In this subsection we will find formulas equivalent to
Theorem~\ref{thm:main2} using this combinatorial interpretation.

\begin{defn}
  A \emph{Dyck path} of length $2n$ is a lattice path from $(0,0)$ to $(2n,0)$
  in $\mathbb{N}^2$ consisting of up steps $(1,1)$ and down steps $(1,-1)$. We
  denote by $\D_n$ the set of Dyck paths of length $2n$.  A \emph{marked Dyck
    path} is a Dyck path in which each up step and down step may be marked.  We
  denote by $\MD_n$ the set of marked Dyck paths of length $2n$.  We also denote
  by $\MD^*_n$ the subset of $\MD_n$ consisting of the marked Dyck paths without
  marked peaks. Here, a \emph{marked peak} means a marked up step immediately
  followed by a marked down step.  Given two sequences $\A=(a_1,a_2,\dots)$,
  $\B=(b_1,b_2,\dots)$ and $p\in\MD_n$, we define the weight $\wt(p;\A,\B)$ to
  be the product of $a_h$ (resp.~$b_h$) for each non-marked up step
  (resp.~non-marked down step) between height $h$ and $h-1$.
\end{defn}

Observe that every marked step can be considered as a step of weight 1.  We will
consider a Dyck path as a marked Dyck path without marked steps.  In this
identification we have $\D_n\subset \MD_n$.

The following combinatorial interpretation of S-fractions is well-known, see
\cite{Flajolet1980}.

\begin{lem}\label{lem1}
For two sequences $\A=(a_1,a_2,\dots)$, $\B=(b_1,b_2,\dots)$, we have
\[
\cfrac{1}{
  1- \cfrac{a_1 b_1 x}{
    1- \cfrac{a_2 b_2 x}{\cdots}}} =
\sum_{n\ge0} x^n \sum_{ p \in \D_n} \wt(p;\A,\B).
\]
\end{lem}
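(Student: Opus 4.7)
The plan is to prove the identity as an equality of formal power series in $x$, using the standard first-return decomposition of Dyck paths. For weight sequences $\A=(a_1,a_2,\ldots)$ and $\B=(b_1,b_2,\ldots)$, write $F(\A,\B;x):=\sum_{n\ge 0}x^n\sum_{p\in\D_n}\wt(p;\A,\B)$, and for each $k\ge 1$ let $\A^{(k)}=(a_{k+1},a_{k+2},\ldots)$ and $\B^{(k)}=(b_{k+1},b_{k+2},\ldots)$ denote the shifted sequences.

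The key step is a first-return decomposition. Any nonempty Dyck path $p\in\D_n$ factors uniquely as $p=U\cdot p_1\cdot D\cdot p_2$, where $U$ is the initial up step (contributing weight $a_1$), $D$ is its matching down step (contributing weight $b_1$), $p_1$ is the portion strictly between $U$ and $D$ (which stays at heights $\ge 1$), and $p_2$ is the remainder of the path at level $0$. Translating $p_1$ down by one unit yields an arbitrary Dyck path whose heights are governed by the shifted sequences $\A^{(1)},\B^{(1)}$, so its weight contributes $\wt(\cdot;\A^{(1)},\B^{(1)})$. Summing over all such decompositions yields
\[
F(\A,\B;x)=1+a_1 b_1\,x\,F(\A^{(1)},\B^{(1)};x)\,F(\A,\B;x),
\]
equivalently $F(\A,\B;x)=\bigl(1-a_1b_1 x\,F(\A^{(1)},\B^{(1)};x)\bigr)^{-1}$.

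Iterating this relation $N$ times unfolds $F(\A,\B;x)$ into the claimed S-fraction truncated at depth $N$, with $F(\A^{(N)},\B^{(N)};x)$ sitting at the bottom. To make the infinite iteration rigorous as a formal power series identity, note that the coefficient of $x^n$ in $F(\A,\B;x)$ only involves Dyck paths of length $2n$, which have height at most $n$; hence choosing $N>n$, the factor $F(\A^{(N)},\B^{(N)};x)=1+O(x)$ no longer affects the coefficient of $x^n$, and the same is true for the S-fraction truncated at depth $N$. Thus both sides agree modulo $x^{n+1}$ for every $n$, proving the identity. There is no real obstacle here: the argument is a standard special case of Flajolet's combinatorial theory of continued fractions, and the only point requiring care is the truncation step, which reduces to the elementary height bound on Dyck paths.
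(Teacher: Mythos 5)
The paper offers no proof of this lemma at all --- it is stated as well-known with a citation to Flajolet --- and your argument is precisely the standard first-return decomposition that underlies that citation, carried out correctly. Both of the points that need care are handled properly: the weight bookkeeping under the downward translation of $p_1$ (a step of $p$ between heights $h$ and $h-1$ carries $a_h$ or $b_h$, which is exactly the $(h-1)$-st entry of the shifted sequence $\A^{(1)}$ or $\B^{(1)}$), and the justification of the infinite iteration as a formal power series identity via the height bound on Dyck paths of length $2n$.
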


The reader may have noticed that every formula in the introduction has the
factor $\binom{2n}{n-k} - \binom{2n}{n-k-1}$ in its summand. This can be
explained by the following lemma.

\begin{lem} \label{lem2}\cite[Lemma~1.2]{JVKim} 
For two sequences $\A$ and $\B$
  we have
\[
 \sum_{p\in \D_n} \wt(p;\A,\B)
=\sum_{k=0}^n \left( \binom{2n}{n-k} - \binom{2n}{n-k-1} \right) 
\sum_{p\in \MD^*_k} \wt(p;\A-\one,\B-\one),
\]
where, if $\A=(a_1,a_2,\dots)$, the sequence $\A-\one$ means
$(a_1-1,a_2-1,\dots)$.
\end{lem}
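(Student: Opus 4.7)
The plan is to combine two ingredients: a weight expansion that introduces marked steps, and a weight-preserving decomposition of marked Dyck paths by iteratively collapsing marked peaks. First, for each edge of a Dyck path $p \in \D_n$ I would write its weight as $a_h = 1 + (a_h-1)$ (for an up step at height $h$) or $b_h = 1 + (b_h-1)$ (for a down step) and fully distribute the product $\wt(p;\A,\B)$. Declaring the steps that contribute the ``$1$'' factor to be marked identifies the expansion with a sum over $\MD_n$, so that
\[
\sum_{p \in \D_n} \wt(p;\A,\B) \;=\; \sum_{p \in \MD_n} \wt(p;\A-\one,\B-\one).
\]

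Next, I would define a reduction $p \mapsto p^*$ on $\MD_n$ that iteratively deletes marked peaks. Two distinct marked peaks never share a step, so the process is confluent and terminates at a unique $p^* \in \MD^*_k$ for some $k$; the deleted steps are all marked, hence of weight $1$, so $\wt(p;\A-\one,\B-\one) = \wt(p^*;\A-\one,\B-\one)$. The inverse procedure inserts, into each of the $2k+1$ gaps of a fixed $p^* \in \MD^*_k$, an all-marked Dyck path of some length $2m_i$ with $m_0 + \cdots + m_{2k} = n-k$. An all-marked Dyck path collapses entirely under the reduction, and because every Dyck path begins with an up step and ends with a down step, the inserted portion never forms a marked peak with a neighboring step of $p^*$. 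Consequently the number $N_{n,k}$ of $p \in \MD_n$ reducing to a fixed $p^*$ equals
\[
N_{n,k} \;=\; \sum_{m_0 + \cdots + m_{2k} = n-k} C_{m_0} \cdots C_{m_{2k}} \;=\; [x^{n-k}]\, C(x)^{2k+1},
\]
where $C(x) = \sum_{m\ge 0} C_m x^m$ is the Catalan generating function. A standard Lagrange-inversion computation using $C(x) = 1 + x\,C(x)^2$ gives $N_{n,k} = \tfrac{2k+1}{2n+1}\binom{2n+1}{n-k}$, and a short algebraic manipulation shows that this equals $\binom{2n}{n-k} - \binom{2n}{n-k-1}$.

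The main obstacle is the bijective step, specifically confluence of iterative marked-peak removal and verification that the insertion procedure is its exact inverse (no marked peak ``straddles'' the boundary between an inserted portion and the core $p^*$). Both points follow from the facts that a marked peak is a local pattern on two adjacent steps and that every Dyck path starts with an up step and ends with a down step, so boundary interactions are excluded. Once this is in place, summing $N_{n,k}\cdot \wt(p^*;\A-\one,\B-\one)$ over all $p^* \in \MD^*_k$ and over $k$ yields the right-hand side of the claim.
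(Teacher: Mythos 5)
The paper does not prove this lemma at all: it is imported verbatim from \cite[Lemma~1.2]{JVKim}, so there is no in-paper argument to compare against. Your proposal is a correct, self-contained proof, and it follows the same route as the cited source: first the expansion $a_h=(a_h-1)+1$, $b_h=(b_h-1)+1$ turning $\sum_{p\in\D_n}\wt(p;\A,\B)$ into $\sum_{p\in\MD_n}\wt(p;\A-\one,\B-\one)$, then the collapse of marked peaks onto a core in $\MD^*_k$, and finally the fiber count $[x^{n-k}]C(x)^{2k+1}=\tfrac{2k+1}{2n+1}\binom{2n+1}{n-k}=\binom{2n}{n-k}-\binom{2n}{n-k-1}$, all of which check out (e.g.\ at $n=1$ the three unmarked-peak-free markings of $UD$ contribute $a_1b_1-1$ and the all-marked one contributes $1$). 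The only place where you assert more than you argue is the claim that the insertion map is the \emph{exact} inverse of the reduction: this needs not just uniqueness of the normal form but canonicity of the set of deleted positions, so that the decomposition of $p$ into core steps and inserted all-marked Dyck factors is unambiguous. That does follow from your observation that two marked peaks are always disjoint (local confluence with position tracking, plus termination), and one should also note that each maximal run of deleted steps is automatically an all-marked Dyck factor because a deleted step cannot be matched across a surviving step; with those two sentences added the proof is complete.
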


From now on we fix the following sequences:
\[
\U = (-q, -q^2,\dots), \qquad \V_t = (-tq, -tq^{2}, \ldots). 
\]

By Lemmas~\ref{lem1} and \ref{lem2}, we have
\begin{align}
E_{n}(t,q) &= \sum_{p\in \D_n} 
\wt(p;(\qint{1}, \qint{2}, \ldots), (\Qint{1}{t,q}, \Qint{2}{t,q}, \ldots))\notag\\
&=\frac{1}{(1-q)^{2n}} \sum_{p\in \D_n} 
\wt(p;(1-q, 1-q^{2}, \ldots),(1-tq, 1-tq^{2}, \ldots))\notag\\
&=\frac{1}{(1-q)^{2n}} \sum_{k=0}^n \left( \binom{2n}{n-k} - \binom{2n}{n-k-1} \right) 
\sum_{p\in \MD^*_k} \wt(p;\U,\V_t).   \label{eq:m2}
\end{align}

\subsection{ \dkpc.s}

We now recall \dkc.s.  We first need some terminologies on integer partitions.

\begin{defn}
  A \emph{partition} is a weakly decreasing sequence
  $\lambda=(\lambda_1,\lambda_2,\dots,\lambda_\ell)$ of positive integer.
  Sometime we will consider that infinitely many zeros are attached at the end
  of $\lambda$ so that $\lambda_i=0$ for all $i>\ell$. Each integer $\lambda_i$
  is called a \emph{part} of $\lambda$ and the size of $\lambda$, denoted
  $|\lambda|$, is the sum of all parts.  The \emph{Ferrers diagram} of $\lambda$
  is the arrangement of left-justified square cells in which the $i$th topmost
  row has $\lambda_i$ cells. We will identify a partition with its Ferrers
  diagram.  \emph{Row $i$} (resp.~\emph{Column $i$}) means the $i$th topmost row
  (resp.~leftmost column). The \emph{$(i,j)$-cell} means the cell in Row $i$ and
  Column $j$. An \emph{inner corner} (resp.~\emph{outer corner}) of $\lambda$ is
  a cell $c\in\lambda$ (resp.~$c\in\dk/\lambda$) such that
  $\lambda\setminus\{c\}$ (resp.~$\lambda\cup\{c\}$) is a partition.  For a
  partition $\lambda$, the \emph{transpose} (or \emph{conjugate}) of $\lambda$
  is the partition, denoted $\lambda^{\tr}$, such that $\lambda^{\tr}$ has the
  $(i,j)$-cell if and only if $\lambda$ has the $(j,i)$-cell.  For two partition
  $\lambda$ and $\mu$ we write $\mu\subset\lambda$ if the Ferrers diagram of
  $\mu$ is contained in that of $\lambda$. In this case we denote their
  difference as sets by $\lambda/\mu$.
\end{defn}

Let $\delta_k$ denote the staircase partition $(k,k-1,\dots,1)$.  Let $B(m,n)$
denote the box with $m$ rows and $n$ columns, that is, $B(m,n) =
(\overbrace{n,n,\dots,n}^m)$. It is well-known, for instance see
\cite{Stanley1997}, that
\begin{equation}
  \label{eq:box}
  \sum_{\lambda\subset B(m,n)} q^{|\lambda|} = \qbinom{m+n}{m}.
\end{equation}

\begin{defn}
  A \emph{\dkc.}  is a pair $(\lambda,A)$ of a partition
  $\lambda\subset\delta_{k-1}$ and a set $A$ of arrows each of which occupies a
  whole row or a whole column of $\dk/\lambda$ or $\delta_{k-1}/\lambda$. If an
  arrow occupies a whole row or a whole column of $\dk/\lambda$
  (resp.~$\delta_{k-1}/\lambda$), we call the arrow a $k$-arrow
  (resp.~$(k-1)$-arrow).  The \emph{length} of an arrow is the number of cells
  occupied by the arrow.  A \emph{fillable corner} is an outer corner which is
  occupied by one $k$-arrow and one $(k-1)$-arrow. A \emph{forbidden corner} is
  an outer corner which is occupied by two $k$-arrows.  A \emph{\dkpc.} is a
  \dkc. without forbidden corners nor $(k-1)$-arrows.
\end{defn}

We note that an arrow in a \dkc. can have length $0$. We will represent an arrow
of length $0$ as a half dot as shown in Figure~\ref{fig:dkconfig}.

\begin{figure}
  \centering
  \begin{pspicture}(0,0)(5,-5)
    \gcell(1,1)[] \gcell(1,2)[] \gcell(1,3)[] \gcell(1,4)[]
    \gcell(2,1)[] \gcell(2,2)[]
    \psdk5 \vzero(2,4) \varrow(2,3)[1] \harrow(3,1)[3] \harrow(4,1)[1]
  \end{pspicture}
 \caption{An example of \dkc..}
  \label{fig:dkconfig}
\end{figure}

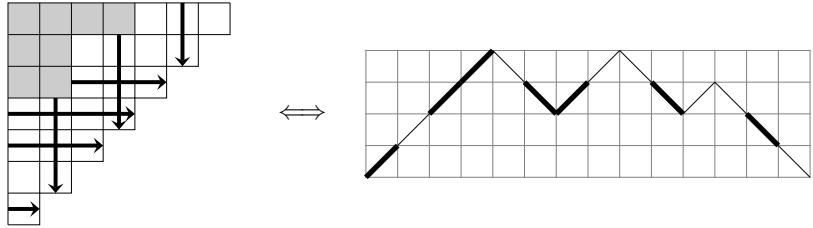
\begin{figure}
 \centering
\begin{pspicture}(0,0)(7,-7)
\gcell(1,1)[]\gcell(1,2)[]\gcell(1,3)[]\gcell(1,4)[]
\gcell(2,1)[]\gcell(2,2)[]
\gcell(3,1)[]\gcell(3,2)[]
\psdk7
\harrow(4,1)[4] \harrow(5,1)[3] \harrow(7,1)[1] \harrow(3,3)[3] 
\varrow(4,2)[3] \varrow(2,4)[3] \varrow(1,6)[2]
\end{pspicture} 
\begin{pspicture}(-4,-1.5)(14,5)
\psgrid[gridcolor=gray](0,0)(14,4)
\rput(-2,2){$\Longleftrightarrow$}
\MUP(0,0)[] \UP(1,1)[$-q^2$] \MUP(2,2)[] \MUP(3,3)[]
\DW(4,4)[$-q^4$] \MDW(5,3)[] \MUP(6,2)[] \UP(7,3)[$-q^4$]
\DW(8,4)[$-q^4$] \MDW(9,3)[] \UP(10,2)[$-q^3$] \DW(11,3)[$-q^3$]
\MDW(12,2)[] \DW(13,1)[$-q$]
\end{pspicture}
\caption{A \dkpc. and the corresponding marked Dyck path, where the marked steps
  are the thicker steps.}
  \label{fig:dk}
\end{figure}

There is a natural bijection between $\MD^*_k$ and $\Dp{k}$ as follows. For
$(\lambda,A)\in\Dp{k}$, the north-west border of $\dk/\lambda$ defines a
marked Dyck path of length $2k$ where the marked steps correspond to the
segments on the border with arrows, see Figure~\ref{fig:dk}.

For a \dkc. $C=(\lambda,A)$, we define the weight $\wt_{t,q}(C)$ by
\[
\wt_{t,q}(C) = (-1)^{|A|} t^{\ha(A)} q^{2|\lambda| + \norm{A}},
\]
where $\norm{A}$ is the sum of the arrow lengths and $\ha(A)$ is the number of
horizontal arrows.  For example, if $C$ is the \dkc. in Figure~\ref{fig:dk}, we
have $\wt_{t,q}(C)=(-1)^7 t^4 q^{2\cdot 8 + 1+3+4+3+3+3+2} $. 

\begin{lem}
  Suppose that $C\in\Dp{k}$ corresponds to $p\in\MD^*_k$ in the bijection
  described above. Then we have
  \begin{equation}
\label{eq:uv2}
\wt(p;\U,\V_t) =t^k q^{k(k+1)}\wt_{t^{-1},q^{-1}}(C),
 \end{equation}
which implies that
\begin{equation}
\label{eq:uv2sum}
\sum_{p\in \MD^*_k} \wt(p;\U,\V_t) =
t^k q^{k(k+1)} \sum_{C\in\Dp{k}}\wt_{t^{-1},q^{-1}}(C).
\end{equation}
\end{lem}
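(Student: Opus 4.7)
The plan is to verify \eqref{eq:uv2} factor-by-factor under the bijection $C\leftrightarrow p$ and then sum to obtain \eqref{eq:uv2sum}. Expanding the right-hand side,
\[
t^k q^{k(k+1)}\wt_{t^{-1},q^{-1}}(C) \;=\; (-1)^{|A|}\, t^{\,k-\ha(A)}\, q^{\,k(k+1)-2|\lambda|-\norm{A}},
\]
whereas
\[
\wt(p;\U,\V_t) \;=\; \prod_{\substack{s\text{ non-marked up}\\\text{at height }h}}(-q^{h}) \;\cdot\; \prod_{\substack{s\text{ non-marked down}\\\text{at height }h}}(-t q^{h}),
\]
so it suffices to match the sign, the power of $t$, and the power of $q$ separately.

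First I would extract from the NW-border construction recalled before the lemma three structural facts: (i) the arrows of $A$ are in one-to-one correspondence with the marked steps of $p$; (ii) horizontal and vertical arrows go to marked steps of opposite parities, in a way that makes the number of non-marked down steps equal to $k-\ha(A)$; (iii) the length of each arrow equals the height of its matched marked step. Granting these, the sign matches at once, since there are $2k-|A|$ non-marked steps so $\wt(p;\U,\V_t)$ carries the sign $(-1)^{2k-|A|}=(-1)^{|A|}$, and the power of $t$ also matches by (ii), since only $\V_t$ carries a $t$.

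The power of $q$ is where the bulk of the work lies. Using (iii), the total height contributed by the marked steps is $\norm{A}$, so the $q$-matching reduces to the purely combinatorial identity
\[
\sum_{s\text{ all steps of }p} h(s) \;=\; k(k+1)-2|\lambda|.
\]
This is the main obstacle. I would prove it by induction on $|\lambda|$. For the base case $\lambda=\emptyset$, the NW border of $\dk$ is a fixed Dyck path whose total height sum one computes directly to be $k(k+1)$. For the inductive step, adding a cell at an outer corner of $\lambda\subset\delta_{k-1}$ modifies $p$ only locally, replacing a peak $UD$ at heights $(h-1,h,h-1)$ by a valley $DU$ at heights $(h-1,h-2,h-1)$; this decreases $\sum_s h(s)$ by exactly $2$, matching the increase of $|\lambda|$ by one, and the induction closes.

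Combining the three matched factors gives \eqref{eq:uv2}, and summing \eqref{eq:uv2} over $C\in\Dp{k}$ (equivalently over its bijective image $p\in\MD^*_k$) yields \eqref{eq:uv2sum}. Aside from the inductive identity for $\sum_s h(s)$, the only remaining bookkeeping is to fix the orientation convention in (ii) consistently with the direction in which the NW border is traversed; once this is pinned down, the three matches above complete the argument.
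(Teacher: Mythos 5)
Your proposal is correct and takes essentially the same route as the paper: both proofs verify \eqref{eq:uv2} by matching the sign, the $t$-power and the $q$-power directly under the bijection, using the facts that arrows correspond to marked steps, that horizontal (resp.\ vertical) arrows correspond to marked down (resp.\ up) steps, and that an arrow's length equals the height of its step. The only difference is in how the exponent identity $\sum_{s}h(s)=k(k+1)-2|\lambda|$ is obtained: the paper indexes the $2k$ steps by the rows and columns of $\delta_k$, so the step heights are $(k+1-i)-\lambda_i$ and $(k+1-i)-\lambda_i^{\tr}$ and the identity follows at once from $2|\lambda|=\sum_i\lambda_i+\sum_i\lambda_i^{\tr}$, whereas you prove it by induction on $|\lambda|$ via the local peak-to-valley move --- a harmless variation.
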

\begin{proof}
  Let $C=(\lambda,A)$. By the construction of the bijection sending $C$ to $p$,
  it is easy to see that
\[
\wt(p;\U,\V_t)=\prod_{i=1}^k r(i) c(i),
\]
where $r(i)=-tq^{(k+1-i)-\lambda_i}$ if there is no horizontal arrow in Row $i$
and $r(i)=1$ otherwise, and $c(i) =-tq^{(k+1-i)-\lambda^\tr_i}$ if there is no
vertical arrow in Column $i$ and $c(i)=1$ otherwise.

Now consider $t^kq^{k(k+1)} \wt_{t^{-1},q^{-1}}(C)$.  By the identities
\[
t^kq^{k(k+1)} = \prod_{i=1}^k (-tq^{k+1-i})(-q^{k+1-i}),
\]
\[
2|\lambda| = \sum_{i=1}^k \lambda_i + \sum_{i=1}^k \lambda^\tr_i,
\]
and the fact that if there is an arrow in Row $i$ (resp.~Column $i$) then its
length is $(k+1-i)-\lambda_i$ (resp.~$(k+1-i)-\lambda^\tr_i$), it is easy to
check
\[
t^kq^{k(k+1)}\wt_{t^{-1},q^{-1}}(C) =\prod_{i=1}^k r(i) c(i),
\]
which finishes the proof.
\end{proof}

\section{Proofs of Theorem~\ref{thm:main2} and Corollary~\ref{cor:Tk}}
\label{sec:proofs}

From now on we denote
\[
T_k(t,q) = \sum_{C\in \Dp k} \wt_{t,q}(C).
\]
For brevity we will also write $T_k$ instead of $T_k(t,q)$. 

By \eqref{eq:m2} and \eqref{eq:uv2sum}, we have
\[ E_n(t,q) = \frac{1}{(1-q)^{2n}} \sum_{k=0}^n \left( \binom{2n}{n-k} -
  \binom{2n}{n-k-1} \right) t^k q^{k(k+1)} T_k(t^{-1}, q^{-1}).\] Thus in order
to prove Theorem~\ref{thm:main2}, it remains to prove the recurrence relation
\eqref{eq:rec_Tk}.  We need some results in \cite{JVKim}. We begin by defining a
set which is in bijection with $\Dp k$.

A \emph{miniature} of a \dkc. is the restriction of it to the $(k-i,i)$-cell,
the $(k-i,i+1)$-cell, and the $(k-i+1,i)$-cell for some $1\leq i\leq k-1$, where
any $(k-1)$-arrows in Column $i+1$ or Row $k-i+1$ are ignored.  For example, the
miniatures of the \dkc. in Figure~\ref{fig:dkconfig} are
\begin{center}
  \begin{pspicture}(0,0)(2,-2)
\psdk2 \harrow(1,1)[1]
 \end{pspicture}, \qquad
 \begin{pspicture}(0,0)(2,-2)
\psdk2 \harrow(1,1)[2]
  \end{pspicture}, \qquad
  \begin{pspicture}(0,0)(2,-2)
\psdk2 \varrow(1,1)[1] \harrow(2,1)[1]
  \end{pspicture}, \qquad
\begin{pspicture}(0,0)(2,-2)
    \gcell(1,1)[]
\psdk2 \vzero(2,1)
  \end{pspicture},
\end{center}
where the bottommost miniature appears first.

\begin{defn}\label{def:dm}
  A \emph{\dkmc.} is a \dkc.  $(\lambda,A)$ satisfying the following conditions.
  \begin{enumerate}
  \item There is neither fillable corner nor forbidden corner.
  \item Every $k$-arrow has length $1$.
  \item For any miniature, if there is a horizontal (resp.~vertical) $k$-arrow
    in the bottom (resp.~right) cell, then the middle cell is contained in
    $\lambda$. Moreover, if the bottom (resp.~right) cell has a horizontal
    (resp.~vertical) $k$-arrow and a vertical (resp.~horizontal) $(k-1)$-arrow,
    then the right (resp.~bottom) cell has a horizontal (resp.~vertical)
    $k$-arrow. Pictorially, these mean the following:
    \begin{center}
   \begin{pspicture}(0,0)(2,-2)
       \cell(1,1)[$?$] \cell(1,2)[$?$] \cell(2,1)[] \psline(2,0)(0,0)(0,-2) \harrow(2,1)[1]
    \end{pspicture}
   \begin{pspicture}(-2,0)(2,-2)
\rput(-1,-1){$\Rightarrow$}
     \Gcell(1,1)[] \cell(1,2)[$?$] \cell(2,1)[]  \psline(2,0)(0,0)(0,-2) \harrow(2,1)[1]
    \end{pspicture},
   \begin{pspicture}(-4,0)(2,-2)
      \Gcell(1,1)[] \cell(1,2)[$?$] \cell(2,1)[] \psline(2,0)(0,0)(0,-2)
      \harrow(2,1)[1] \vzero(2,1)
   \end{pspicture}
   \begin{pspicture}(-2,0)(2,-2)
\rput(-1,-1){$\Rightarrow$}
      \Gcell(1,1)[] \cell(1,2)[] \cell(2,1)[]  \psline(2,0)(0,0)(0,-2)
      \harrow(2,1)[1] \vzero(2,1) \harrow(1,2)[1]
    \end{pspicture},

   \begin{pspicture}(0,0)(2,-2)
      \cell(1,1)[$?$] \cell(1,2)[] \cell(2,1)[$?$] \psline(2,0)(0,0)(0,-2) \varrow(1,2)[1]
    \end{pspicture}
   \begin{pspicture}(-2,0)(2,-2)
\rput(-1,-1){$\Rightarrow$}
       \Gcell(1,1)[] \cell(1,2)[] \cell(2,1)[$?$] \psline(2,0)(0,0)(0,-2) \varrow(1,2)[1]
    \end{pspicture},
   \begin{pspicture}(-4,0)(2,-2)
     \Gcell(1,1)[] \cell(1,2)[] \cell(2,1)[$?$] \psline(2,0)(0,0)(0,-2)
      \varrow(1,2)[1] \hzero(1,2)
   \end{pspicture}
   \begin{pspicture}(-2,1)(2,-2)
\rput(-1,-1){$\Rightarrow$}
      \Gcell(1,1)[] \cell(1,2)[] \cell(2,1)[] \psline(2,0)(0,0)(0,-2)
      \varrow(1,2)[1] \hzero(1,2) \varrow(2,1)[1]
    \end{pspicture}.
   \end{center}
 \end{enumerate}
 The set of \dkmc.s is denoted by $\Dm{k}$.
\end{defn}

\JV. and the author \cite[Proposition~4.1]{JVKim} found a bijection $\psi:\Dp
k\to\Dm k$ preserving $\wt_{1,q}$, i.e. $\wt_{1,q}(\psi(C))=\wt_{1,q}(C)$ for
all $C\in \Dp k$. From the construction of $\psi$ in their paper, it is clear
that $\psi$ also preserves the number of horizontal arrows. Thus we also have
$\wt_{t,q}(\psi(C))=\wt_{t,q}(C)$ for all $C\in \Dp k$, which implies
\begin{equation}
  \label{eq:Dp-Dm}
T_k = \sum_{C\in \Dm k} \wt_{t,q}(C).  
\end{equation}

Since $\Dp{k-1}\subset\Dm k$,  we can rewrite
\eqref{eq:Dp-Dm} as
\begin{equation}
  \label{eq:11}
T_k= T_{k-1} + \sum_{C\in \Dm k\setminus \Dp{k-1}} \wt_{t,q}(C).
\end{equation}

In order to compute the sum in \eqref{eq:11}, we need a property of the elements
in $\Dm k\setminus \Dp{k-1}$.

\begin{lem}\cite[Lemma~4.2]{JVKim} \label{lem:mini}
  Let $C\in \Dm k$. Then $C\in \Dm k \setminus \Dp {k-1}$ if and only if $C$ has
  a miniature listed in Figure~\ref{fig:list}.
\end{lem}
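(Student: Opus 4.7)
The goal is to pin down precisely which $\delta_k^-$-configurations lie outside the embedded subset $\Dp{k-1}$, and the plan is to characterize this locally by inspecting the boundary behavior of $C=(\lambda,A)$.

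First I would identify how $\Dp{k-1}$ sits inside $\Dm k$. A $\delta_{k-1}^+$-configuration has $\lambda \subset \delta_{k-2}$ and all its arrows run through rows or columns of $\delta_{k-1}/\lambda$; when such a configuration is embedded into $\Dm k$ it contains no $k$-arrows (every arrow becomes a $(k-1)$-arrow), and $\lambda$ never reaches the outermost diagonal $\delta_{k-1}/\delta_{k-2}$. Conversely, if $C\in\Dm k$ contains no $k$-arrow and $\lambda\subset\delta_{k-2}$, then re-reading $C$ as a $\delta_{k-1}$-configuration the conditions in Definition~\ref{def:dm} translate into the defining conditions of $\Dp{k-1}$. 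So the membership $C\in\Dm k\setminus\Dp{k-1}$ is equivalent to the presence of at least one $k$-arrow or a cell of $\lambda$ on the outermost diagonal of $\delta_{k-1}$.

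Next I would translate ``$k$-arrow present'' or ``$\lambda$ meets the outer diagonal'' into a statement about miniatures. Since Definition~\ref{def:dm}(2) forces every $k$-arrow to have length~$1$, any such arrow sits in a single boundary cell of $\delta_k/\delta_{k-1}$, and the propagation rules in Definition~\ref{def:dm}(3) control what can appear in the two adjacent cells to the left and above: the middle cell must lie in $\lambda$, and certain arrow pairings force a second adjacent $k$-arrow. Walking along the staircase and examining, for each $i$ with $1\le i\le k-1$, the restriction to the triple $(k-i,i)$, $(k-i,i+1)$, $(k-i+1,i)$, one finds that the local configurations split into a short list of types. The same analysis applied to a cell of $\lambda$ lying in the outer diagonal, combined with the absence of fillable and forbidden corners from Definition~\ref{def:dm}(1), produces exactly the remaining patterns in Figure~\ref{fig:list}. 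The reverse implication is then a direct check: every miniature in the list either contains a $k$-arrow or exhibits a cell of $\lambda$ outside $\delta_{k-2}$, either of which is incompatible with $C\in\Dp{k-1}$.

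The main obstacle is the case analysis in the second step: one must verify that the propagation rules in Definition~\ref{def:dm}(3) are strong enough that every $k$-arrow (or outer-diagonal cell of $\lambda$) automatically gives rise to a miniature on the published list, and that no further local pattern can arise, particularly at the extreme positions $i=1$ and $i=k-1$ where the staircase geometry degenerates. Once that finite case-check is completed the equivalence follows; in any event, as indicated in the statement, the argument is already carried out in \cite[Lemma~4.2]{JVKim}, so the present role of the lemma is simply to feed the enumeration in \eqref{eq:11}.
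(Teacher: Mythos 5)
The paper does not prove this lemma at all: it is quoted verbatim from \cite[Lemma~4.2]{JVKim}, and its only role here is to feed the fixed-point analysis in Lemma~\ref{lem:rec}. So there is no in-paper argument to compare yours against; what you have written is a sketch of a proof the paper deliberately omits, and your closing sentence ultimately falls back on the same citation.

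Judged as a sketch, the outline is plausible but has one step that would fail as stated and one step that is merely announced. The problematic step is your reduction of membership: you claim that a $C\in\Dm k$ with no $k$-arrows and $\lambda\subset\delta_{k-2}$ automatically lies in $\Dp{k-1}$ because ``the conditions in Definition~\ref{def:dm} translate into the defining conditions of $\Dp{k-1}$.'' They do not, at least with the definitions as recalled in this paper: membership in $\Dp{k-1}$ additionally forbids an outer corner occupied by \emph{two $(k-1)$-arrows} (the forbidden-corner condition one level down), whereas Definition~\ref{def:dm}(1) only excludes corners involving a $k$-arrow. One must argue separately that such a doubly occupied corner cannot occur for an element of $\Dm k$, or that it forces a listed miniature; your sketch is silent on this, and your proposed equivalence ``$C\notin\Dp{k-1}$ iff a $k$-arrow is present or $\lambda$ meets $\delta_{k-1}/\delta_{k-2}$'' is exactly what needs proof rather than something that can be asserted. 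Note also that every miniature in Figure~\ref{fig:list} has its middle cell in $\lambda$, so for the forward implication your first disjunct must collapse into the second: you need the (true, but unstated) observation that conditions (2) and (3) of Definition~\ref{def:dm} force the cell of $\delta_{k-1}/\delta_{k-2}$ adjacent to any length-one $k$-arrow to lie in $\lambda$. Finally, the case analysis producing exactly the eighteen patterns of Figure~\ref{fig:list} is the entire content of the lemma, and you describe it without carrying it out. As written the proposal is a plan plus a citation, not a proof.
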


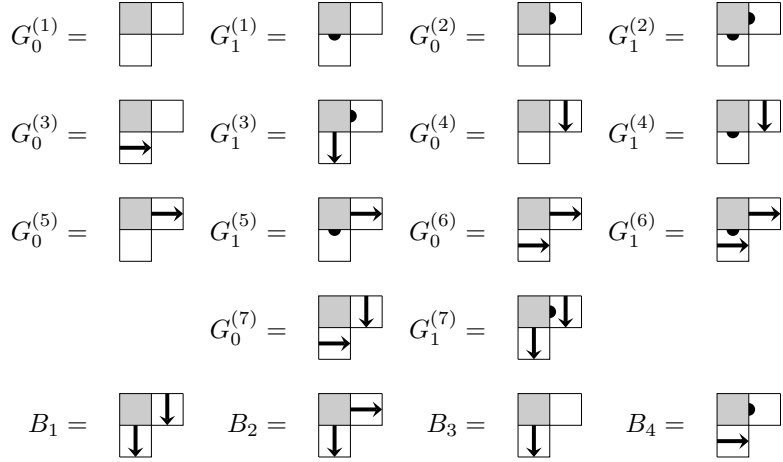
\begin{figure}
\begin{center}
\begin{pspicture}(-2,0)(4,-3)
\rput[r](-1,-1){$G^{(1)}_0=$}
\Gcell(1,1)[] \cell(2,1)[] \cell(1,2)[] \psline(2,0)(0,0)(0,-2)
\end{pspicture}
\begin{pspicture}(-2,0)(4,-3)
\rput[r](-1,-1){$G^{(1)}_1=$}
\Gcell(1,1)[] \cell(2,1)[] \cell(1,2)[] \psline(2,0)(0,0)(0,-2)
\vzero(2,1)
\end{pspicture}
\begin{pspicture}(-2,0)(4,-3)
\rput[r](-1,-1){$G^{(2)}_0=$}
\Gcell(1,1)[] \cell(2,1)[] \cell(1,2)[] \psline(2,0)(0,0)(0,-2)
\hzero(1,2)
\end{pspicture}
\begin{pspicture}(-2,0)(4,-3)
\rput[r](-1,-1){$G^{(2)}_1=$}
\Gcell(1,1)[] \cell(2,1)[] \cell(1,2)[] \psline(2,0)(0,0)(0,-2)
\hzero(1,2)
\vzero(2,1)
\end{pspicture}

\begin{pspicture}(-2,0)(4,-3)
\rput[r](-1,-1){$G^{(3)}_0=$}
\Gcell(1,1)[] \cell(2,1)[] \cell(1,2)[] \psline(2,0)(0,0)(0,-2)
\harrow(2,1)[1]
\end{pspicture}
\begin{pspicture}(-2,0)(4,-3)
\rput[r](-1,-1){$G^{(3)}_1=$}
\Gcell(1,1)[] \cell(2,1)[] \cell(1,2)[] \psline(2,0)(0,0)(0,-2)
\varrow(2,1)[1]
\hzero(1,2)
\end{pspicture}
\begin{pspicture}(-2,0)(4,-3)
\rput[r](-1,-1){$G^{(4)}_0=$}
\Gcell(1,1)[] \cell(2,1)[] \cell(1,2)[] \psline(2,0)(0,0)(0,-2)
\varrow(1,2)[1]
\end{pspicture}
\begin{pspicture}(-2,0)(4,-3)
\rput[r](-1,-1){$G^{(4)}_1=$}
\Gcell(1,1)[] \cell(2,1)[] \cell(1,2)[] \psline(2,0)(0,0)(0,-2)
\vzero(2,1)
\varrow(1,2)[1]
\end{pspicture}

\begin{pspicture}(-2,0)(4,-3)
\rput[r](-1,-1){$G^{(5)}_0=$}
\Gcell(1,1)[] \cell(2,1)[] \cell(1,2)[] \psline(2,0)(0,0)(0,-2)
\harrow(1,2)[1]
\end{pspicture}
\begin{pspicture}(-2,0)(4,-3)
\rput[r](-1,-1){$G^{(5)}_1=$}
\Gcell(1,1)[] \cell(2,1)[] \cell(1,2)[] \psline(2,0)(0,0)(0,-2)
\harrow(1,2)[1]
\vzero(2,1)
\end{pspicture}
\begin{pspicture}(-2,0)(4,-3)
\rput[r](-1,-1){$G^{(6)}_0=$}
\Gcell(1,1)[] \cell(2,1)[] \cell(1,2)[] \psline(2,0)(0,0)(0,-2)
\harrow(2,1)[1]\harrow(1,2)[1]
\end{pspicture}
\begin{pspicture}(-2,0)(4,-3)
\rput[r](-1,-1){$G^{(6)}_1=$}
\Gcell(1,1)[] \cell(2,1)[] \cell(1,2)[] \psline(2,0)(0,0)(0,-2)
\harrow(2,1)[1]\harrow(1,2)[1]  \vzero(2,1)
\end{pspicture}

\begin{pspicture}(-2,0)(4,-3)
\rput[r](-1,-1){$G^{(7)}_0=$}
\Gcell(1,1)[] \cell(2,1)[] \cell(1,2)[] \psline(2,0)(0,0)(0,-2)
\harrow(2,1)[1]\varrow(1,2)[1]
\end{pspicture}
\begin{pspicture}(-2,0)(4,-3)
\rput[r](-1,-1){$G^{(7)}_1=$}
\Gcell(1,1)[] \cell(2,1)[] \cell(1,2)[] \psline(2,0)(0,0)(0,-2)
\varrow(2,1)[1]\varrow(1,2)[1]  \hzero(1,2)
\end{pspicture}

\begin{pspicture}(-2,0)(4,-3)
\rput[r](-1,-1){$B_1=$}
\Gcell(1,1)[] \cell(2,1)[] \cell(1,2)[] \psline(2,0)(0,0)(0,-2)
\varrow(2,1)[1]\varrow(1,2)[1]
\end{pspicture}
\begin{pspicture}(-2,0)(4,-3)
\rput[r](-1,-1){$B_2=$}
\Gcell(1,1)[] \cell(2,1)[] \cell(1,2)[] \psline(2,0)(0,0)(0,-2)
\varrow(2,1)[1]\harrow(1,2)[1]
\end{pspicture}
\begin{pspicture}(-2,0)(4,-3)
\rput[r](-1,-1){$B_3=$}
\Gcell(1,1)[] \cell(2,1)[] \cell(1,2)[] \psline(2,0)(0,0)(0,-2)
\varrow(2,1)[1]
\end{pspicture}
\begin{pspicture}(-2,0)(4,-3)
\rput[r](-1,-1){$B_4=$}
\Gcell(1,1)[] \cell(2,1)[] \cell(1,2)[] \psline(2,0)(0,0)(0,-2)
\harrow(2,1)[1]
\hzero(1,2)
\end{pspicture}
\end{center}
 \caption{List of exceptions.}
  \label{fig:list}
\end{figure}

\begin{figure}
   \centering
\begin{pspicture}(0,0)(8,-8)
\gcell(1,1)[]  \gcell(1,2)[] \gcell(1,3)[] \gcell(1,4)[] \gcell(1,5)[]\gcell(1,6)[]
\gcell(2,1)[]  \gcell(2,2)[] \gcell(2,3)[] \gcell(2,4)[] 
\gcell(3,1)[]  \gcell(3,2)[] \gcell(3,3)[] \gcell(3,4)[]
\gcell(4,1)[]  \gcell(4,2)[] \gcell(4,3)[] \gcell(4,4)[]
\gcell(5,1)[]  \gcell(5,2)[] \gcell(5,3)[]
\gcell(6,1)[]
\gcell(7,1)[]
\psdk8
\harrow(1,7)[1] \harrow(4,5)[1]\varrow(5,4)[1]\vzero(6,3)
\harrow(7,2)[1] \harrow(8,1)[1]\varrow(2,5)[2]
\rput(-2,-2){\pspolygon[linewidth=1pt,linecolor=red](4,-2)(4,-4)(5,-4)(5,-3)(6,-3)(6,-2)}
\end{pspicture}
\begin{pspicture}(0,0)(4,-8)
\rput(2,-3){$\phi$}
\rput(2,-4){$\Longleftrightarrow$}
\end{pspicture}
\begin{pspicture}(0,0)(8,-8)
\gcell(1,1)[]  \gcell(1,2)[] \gcell(1,3)[] \gcell(1,4)[] \gcell(1,5)[]\gcell(1,6)[]
\gcell(2,1)[]  \gcell(2,2)[] \gcell(2,3)[] \gcell(2,4)[] 
\gcell(3,1)[]  \gcell(3,2)[] \gcell(3,3)[] \gcell(3,4)[]
\gcell(4,1)[]  \gcell(4,2)[] \gcell(4,3)[] \gcell(4,4)[]
\gcell(5,1)[]  \gcell(5,2)[] \gcell(5,3)[]
\gcell(6,1)[]
\gcell(7,1)[]
\psdk8
\harrow(1,7)[1] \harrow(4,5)[1]\varrow(5,4)[1]
\harrow(7,2)[1] \harrow(8,1)[1] \varrow(2,5)[2]
\rput(-2,-2){\pspolygon[linewidth=1pt,linecolor=red](4,-2)(4,-4)(5,-4)(5,-3)(6,-3)(6,-2)}
\end{pspicture}
\caption{The sign-reversing involution $\phi$ on $\Dm k\setminus\Dp{k-1}$. The
  topmost good exception is colored red.}
   \label{fig:sign-reversing}
 \end{figure}
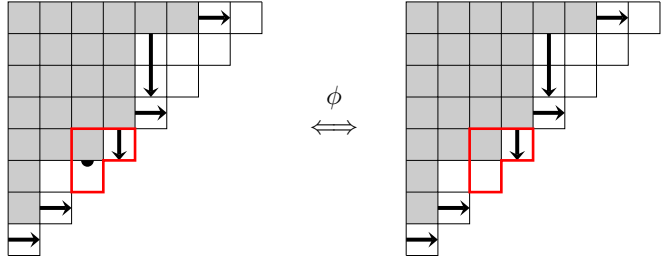

 We call the miniatures in Figure~\ref{fig:list} \emph{exceptions}. The exceptions
 $B_1$, $B_2$, $B_3$, and $B_4$ are called \emph{bad exceptions}, and the others
 are called \emph{good exceptions}.

Now we can compute the sum in \eqref{eq:11}. 

\begin{lem}\label{lem:rec}
We have
\[
\sum_{C\in \Dm k\setminus \Dp {k-1}} \wt_{t,q}(C)
=(1+t)(-q)^{k^2} + (1-t^2) \sum_{i=1}^{k-1} (-q)^{k^2-i^2} T_{i-1}.
\]
\end{lem}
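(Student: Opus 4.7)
The plan is to decompose $\Dm k \setminus \Dp{k-1}$ according to the topmost exception of each configuration (breaking ties, say, by leftmost column), which exists by Lemma \ref{lem:mini}. The good-exception contributions will cancel through a sign-reversing involution, while the bad-exception contributions will account for the entire right-hand side of the claimed identity.

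For the good-exception part, observe that the fourteen good exceptions in Figure \ref{fig:list} naturally pair as $(G^{(j)}_0, G^{(j)}_1)$ for $j = 1,\dots,7$, where $G^{(j)}_1$ differs from $G^{(j)}_0$ by exactly one length-$0$ arrow at a designated cell of the miniature (either a length-$0$ vertical arrow at the bottom cell or a length-$0$ horizontal arrow at the right cell). Define an involution $\phi$ on configurations whose topmost exception is good by toggling that designated length-$0$ arrow in the topmost good miniature. Since the toggle preserves $\lambda$, $\norm{A}$, and $\ha(A)$ but changes $|A|$ by exactly one, we obtain $\wt_{t,q}(\phi(C)) = -\wt_{t,q}(C)$. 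Provided $\phi(C) \in \Dm k$ and that the topmost exception of $\phi(C)$ is still located at the same miniature (both facts follow because the conditions of Definition \ref{def:dm} are local and the toggle does not affect any cell outside the miniature), $\phi$ is a fixed-point-free sign-reversing involution, so these configurations contribute zero.

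It remains to sum $\wt_{t,q}(C)$ over configurations $C$ whose topmost exception is a bad exception $B_1$, $B_2$, $B_3$, or $B_4$, located at a miniature with column index $i \in \{1,\dots,k-1\}$. The key geometric observation is that the absence of any exception strictly above this miniature forces a rigid structure on the northeast region of $C$: the partition $\lambda$ and the $k$-arrows in rows $1$ through $k-i-1$ are uniquely determined. This forced piece contributes a uniform factor of the form $(-q)^{k^2-i^2}$ times a specific $t$-power. Meanwhile, the restriction of $C$ to the sub-staircase southwest of the miniature is an arbitrary element of $\Dp{i-1}$, contributing $T_{i-1}$. The four local weights of $B_1,\dots,B_4$ (computed from the three-cell miniature), combined with the $t$-powers from the forced northern piece, yield the net factor $(1-t^2)(-q)^{k^2-i^2}T_{i-1}$ for each such $i$. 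The isolated term $(1+t)(-q)^{k^2}$ collects the degenerate extremal contributions in which no southwestern sub-staircase is present; the base case $k=1$ (where $\Dm 1 \setminus \Dp 0$ consists of the two single-arrow configurations whose combined weight equals $-q - tq = (1+t)(-q)$) serves both as a sanity check and as the starting point of an inductive verification.

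The main technical obstacle is the rigidity argument in the previous paragraph: one must verify, by induction on rows (or columns), that in the absence of any good or bad exception strictly above column $i$, the partition $\lambda$ and the $k$-arrows in the northern region are forced into the asserted unique configuration, and that their combined $\wt_{t,q}$-contribution is precisely $(-q)^{k^2-i^2}$ times the appropriate $t$-factor. Once this rigidity is established, the involution $\phi$ of the second paragraph and the subsequent local weight sum for $B_1,\dots,B_4$ are routine applications of the definition of $\wt_{t,q}$.
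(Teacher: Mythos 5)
Your overall strategy---a sign-reversing involution cancelling the good exceptions plus a direct enumeration of what remains---is the same as the paper's, and your $k=1$ check is correct, but your structural analysis of the bad-exception configurations is exactly backwards, and that is where the content of the lemma lies. In a fixed point of the involution, the free part contributing $T_{i-1}$ is the sub-configuration sitting \emph{above} the topmost bad exception, in the top-right corner of the staircase (the region marked with question marks in Figure~\ref{fig:bad_exceptions}); it is free precisely because that region contains no exceptions and hence forms an arbitrary element of $\Dp{i-1}$. What is rigid is the region \emph{below}: the miniature below $B_1$, $B_2$, $B_3$ is forced to be $B_1$ and the miniature below $B_4$ is forced to be $B_2$, so the bad exceptions propagate in a forced chain all the way down to the bottom-left corner, and it is this chain that produces the factor $(-q)^{k^2-i^2}$. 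Your ``key geometric observation'' asserts the opposite---that rows $1$ through $k-i-1$ are uniquely determined while the southwest sub-staircase is a free $\Dp{i-1}$---and the rigidity induction you defer to cannot succeed: a region with no exceptions is not unique (any exception-free sub-configuration is allowed there), while the sub-staircase below the topmost bad exception is not free at all. Relatedly, the factor $(1-t^2)$ does not arise from summing four local weights of $B_1,\dots,B_4$ at a single miniature; it arises from comparing the two entire forced chains headed by $B_3$ (all vertical arrows, total weight $(-q)^{k^2-i^2}$) and by $B_4$ (two horizontal arrows plus a length-zero arrow, total weight $-t^2(-q)^{k^2-i^2}$). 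Likewise the term $(1+t)(-q)^{k^2}$ comes from the chains headed by $B_1$ or $B_2$, which can only occur when the chain reaches the first row, i.e., when the \emph{northeastern} free part is empty---not, as you write, when the southwestern part is absent.

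Two smaller points. First, the pairs $G^{(3)}_0\leftrightarrow G^{(3)}_1$ and $G^{(7)}_0\leftrightarrow G^{(7)}_1$ are not related by toggling a single length-zero arrow (a horizontal arrow on the bottom cell is traded for a vertical arrow there together with a length-zero horizontal arrow on the right cell), although the weight does still change sign. Second, you apply the involution only when the topmost exception overall is good, whereas the paper toggles the topmost \emph{good} exception of any configuration possessing one; these two decompositions coincide only because bad exceptions force bad exceptions all the way below them, which is precisely the propagation fact your write-up has inverted and therefore cannot invoke.
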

\begin{proof}
  We will construct a sign-reversing involution $\phi$ on $\Dm k\setminus \Dp
  {k-1}$, i.e. an involution satisfying $\wt_{t,q}(\phi(C)) = - \wt_{t,q}(C)$
  if $\phi(C)\ne C$. If $\phi(C)= C$, we call $C$ a \emph{fixed point} of
  $\phi$.

  Suppose $C\in\Dm k\setminus \Dp {k-1}$. By Lemma~\ref{lem:mini}, $C$ has an
  exception. If $C$ has a good exception, find the topmost good exception. If
  the topmost good exception is $G_0^{(i)}$ (resp.~$G_1^{(i)}$) for some
  $i=1,2,\dots,7$, we define $\phi(C)$ to be the configuration obtained from $C$
  by replacing the topmost good exception with $G_1^{(i)}$ (resp.~$G_0^{(i)}$),
  see Figure~\ref{fig:sign-reversing}.  If $C$ has no good exceptions, we define
  $\phi(C)=C$. The map $\phi$ is certainly a sign-reversing involution whose
  fixed points are those containing only bad exceptions.

  Now suppose that $C$ has only bad exceptions. Note that the topmost bad
  exception determined the miniatures below it because the miniature below
  $B_1,B_2,B_3$ must be $B_1$ and the miniature below $B_4$ must be $B_2$.
  Furthermore, $B_1$ or $B_2$ can be the topmost exception only if it intersects
  with the first row. Thus $C$ looks like one of the configurations in
  Figure~\ref{fig:bad_exceptions}. Since there is no exception above the topmost
  bad exception, the sub-configurations consisting of ?'s in the last two
  configurations in Figure~\ref{fig:bad_exceptions} are contained in $\Dp
  {i-1}$, where $i$ can be any integer in $\{1,2,\dots,k-1\}$. Thus the weight
  sum of the configurations in Figure~\ref{fig:bad_exceptions} are, from left to
  right,
\[
(-q)^{k^2},\quad t(-q)^{k^2},\quad \sum_{i=1}^{k-1}(-q)^{k^2-i^2}T_{i-1}, \quad
\sum_{i=1}^{k-1}-t^2(-q)^{k^2-i^2}  T_{i-1}.
\]
Since the left hand side of the equation of the lemma is the weight sum of fixed
points of $\phi$, we are done. 
\end{proof}

  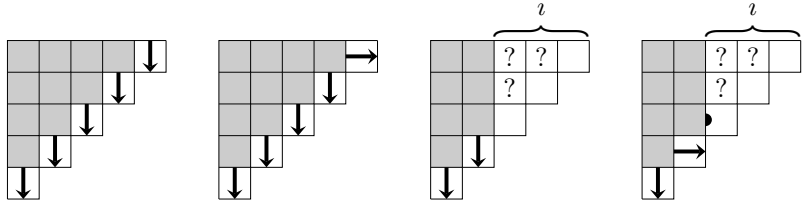
\begin{figure}
    \centering
    \begin{pspicture}(0,1.5)(5,-5) 
\gcell(1,1)[] \gcell(1,2)[] \gcell(1,3)[] \gcell(1,4)[]
\gcell(2,1)[] \gcell(2,2)[] \gcell(2,3)[] 
\gcell(3,1)[] \gcell(3,2)[] 
\gcell(4,1)[] 
\psdk5
\varrow(1,5)[1] \varrow(2,4)[1] \varrow(3,3)[1] \varrow(4,2)[1] \varrow(5,1)[1]
  \end{pspicture}\qquad
    \begin{pspicture}(0,1.5)(5,-5) 
\gcell(1,1)[] \gcell(1,2)[] \gcell(1,3)[] \gcell(1,4)[]
\gcell(2,1)[] \gcell(2,2)[] \gcell(2,3)[] 
\gcell(3,1)[] \gcell(3,2)[] 
\gcell(4,1)[] 
\psdk5
\harrow(1,5)[1] \varrow(2,4)[1] \varrow(3,3)[1] \varrow(4,2)[1] \varrow(5,1)[1]
  \end{pspicture}\qquad
    \begin{pspicture}(0,1.5)(5,-5) 
\gcell(1,1)[] \gcell(1,2)[]
\gcell(2,1)[] \gcell(2,2)[]
\gcell(3,1)[] \gcell(3,2)[] 
\gcell(4,1)[] 
\psdk5
\varrow(4,2)[1] \varrow(5,1)[1]
\rput(-.5,.5){
\rput(3,-1){?} \rput(4,-1){?}
\rput(3,-2){?}
}
\upbrace(3.5,0){3}[$i$]
   \end{pspicture}\qquad
    \begin{pspicture}(0,1.5)(5,-5) 
\gcell(1,1)[] \gcell(1,2)[]
\gcell(2,1)[] \gcell(2,2)[]
\gcell(3,1)[] \gcell(3,2)[] 
\gcell(4,1)[] 
\psdk5
\hzero(3,3)\harrow(4,2)[1] \varrow(5,1)[1]
\rput(-.5,.5){
\rput(3,-1){?} \rput(4,-1){?}
\rput(3,-2){?}
}
\upbrace(3.5,0){3}[$i$]
   \end{pspicture}
    \caption{The elements in $\Dm k\setminus \Dp {k-1}$ containing only bad exceptions.}
    \label{fig:bad_exceptions}
  \end{figure}

  From \eqref{eq:11} and Lemma~\ref{lem:rec} we get the recurrence relation
  \eqref{eq:rec_Tk} for $T_k$, thus completing the proof of
  Theorem~\ref{thm:main2}. 

In order to find a formula for $T_k$ from the above recurrence relation, we
introduce a lattice path model for $T_k$.  We consider the integer lattice
$\mathbb{Z}\times \mathbb{Z}$ in which the unit length is defined to be $\sqrt2$
so that the area of a unit square is $2$. In this lattice the area of the right
triangle with three vertices $(0,0)$, $(1,0)$, and $(0,-1)$ is $1$.

For nonnegative integers $k$ and $j$, let $M[(k,0)\to(0,-j)]$ denote the set of
paths from $(k,0)$ to $(0,-j)$ consisting of west steps $(-1,0)$ and southwest
steps $(-1,-1)$. We define the weight $w(p)$ of $p\in M[(k,0)\to(0,-j)]$ to be
\begin{equation}
  \label{eq:4}
w(p) = (-1)^j q^{A(R)} (1-t^2)^s V,  
\end{equation}
where $A(R)$ is the area of the region $R$ bounded by the $x$-axis, the $y$-axis,
and $p$, $s$ is the number of southwest steps immediately followed by a west
step, and $V=1+t$ if the last step is southwest, and $V=1$ otherwise.

\begin{lem}\label{lem:path}
  For $k\geq0$, we have
  \[
T_k = \sum_{j\geq0} \sum_{p\in M[(k,0)\to(0,-j)]} w(p).
\]
\end{lem}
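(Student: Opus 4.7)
The plan is to show that the right-hand side, call it $R_k$, satisfies the same recurrence \eqref{eq:rec_Tk} as $T_k$, together with $R_0 = 1$, so that $R_k = T_k$ follows by induction. The base case $R_0 = 1$ is immediate once one adopts the natural convention $V = 1$ for the empty path, which is the unique element of $M[(0,0)\to(0,0)]$. For the inductive step I would condition on the initial block of steps of $p$, splitting into three cases: (i) $p$ begins with a west step W; (ii) $p$ consists entirely of southwest steps SW; (iii) $p$ begins with a maximal initial SW-run of length $m\in\{1,\dots,k-1\}$ followed by a W.

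In case (i), writing $p = W\cdot p'$ with $p'\in M[(k-1,0)\to(0,-j)]$, the initial W lies on the $x$-axis, so it contributes nothing to $A(R)$, creates no SW-followed-by-W adjacency, and preserves the final step. Hence $w(p)=w(p')$, and summing yields $R_{k-1}$, matching the first term of \eqref{eq:rec_Tk}. In case (ii), the unique path encloses a right triangle of lattice area $k^2$, has $s=0$ and $V=1+t$; using the parity identity $(-1)^k=(-1)^{k^2}$, its weight equals $(1+t)(-q)^{k^2}$, matching the second term. In case (iii), set $i=k-m$ and write $p=\mathrm{SW}^{k-i}\cdot W\cdot p''$, where $p''$, after a vertical translation by $k-i$, is an arbitrary element of $M[(i-1,0)\to(0,-j'')]$ for some $j''\ge 0$. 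The goal reduces to establishing the weight factorization
\[
w(p) = (1-t^2)(-q)^{k^2-i^2}\, w(p''),
\]
which, summed over $p''$ and then over $i\in\{1,\dots,k-1\}$, reproduces the third term of \eqref{eq:rec_Tk}.

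To establish the factorization I would verify four ingredients. (a) The area decomposes by splitting the enclosed region along the vertical line $x=i-1$ into the quadrilateral under $\mathrm{SW}^{k-i}W$ with vertices $(i-1,0), (k,0), (i,-(k-i)), (i-1,-(k-i))$ (lattice area $(k-i)(k-i+2)$ by a shoelace computation), the rectangle $[0,i-1]\times[-(k-i),0]$ of lattice area $2(i-1)(k-i)$ sitting above the translated $p''$, and the region $R''$ itself of area $A(R'')$; these telescope to $A(R)=(k^2-i^2)+A(R'')$. (b) Signs multiply as $(-1)^j=(-1)^{k-i}(-1)^{j''}$, and combining with (a) gives the factor $(-q)^{k^2-i^2}$ via the parity identity $n^2\equiv n\pmod 2$. (c) Exactly one new SW-followed-by-W adjacency is introduced, at the boundary between the last SW of the initial run and the connecting W, so $s=s''+1$, producing the $(1-t^2)$. (d) The terminal factor $V$ matches: if $p''$ is non-empty it shares its last step with $p$, and if $p''$ is empty (possible only when $i=1$) the last step of $p$ is the connecting W, so $V=1$ agrees with the empty-path convention. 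Summing the three cases then reproduces \eqref{eq:rec_Tk}. The main obstacle is the area decomposition in (a), which requires carefully tracking the lattice-area convention in which each unit square has area $2$ and each unit right triangle has area $1$; once this is handled, the remaining steps are essentially bookkeeping.
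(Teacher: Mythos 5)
Your proof is correct and takes essentially the same approach as the paper's: both verify that the right-hand side satisfies the recurrence \eqref{eq:rec_Tk} by conditioning on the initial block of steps (leading west step, all-southwest path, or an initial southwest run followed by a west step) and establishing the same area/sign/adjacency factorization of the weight. The only cosmetic difference is that you index the third case by $i=k-m$ from the outset, whereas the paper indexes by the length of the initial southwest run and reindexes at the end.
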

\begin{proof}
  Let $T'_k$ denote the right hand side of the equation. We will show that
  $T'_k$ satisfies the same recurrence relation in \eqref{eq:rec_Tk}.

  Observe that $T'_k$ is the sum of $w(p)$ for all paths $p$ from $(k,0)$ to a
  point on the $y$-axis consisting of west steps and southwest steps. The weight
  sum of such paths $p$ starting with a west step is $T'_{k-1}$. Suppose now that
  $p$ starts with a southwest step. If $p$ has only southwest steps, then $p$
  must be a path from $(k,0)$ to $(0,-k)$ and $w(p) = (-1)^k q^{k^2}
  (1+t)$. Otherwise we may assume that the first west step of $p$ is the
  $(i+1)$st step for some $1\leq i\leq k-1$. Let $p'$ be the path obtained from
  $p$ by removing the first $i+1$ steps and shifting the remaining path upwards
  by $i$ units. Then $p'$ is a path from $(k-i-1,0)$ to a point on the $y$-axis
  and $w(p) = (-1)^i q^{k^2-(k-i)^2} (1-t^2) w(p')$. Summarizing these, we get
\[
T'_k = T'_{k-1} + (1+t)(-q)^{k^2} + 
(1-t^2) \sum_{i=1}^{k-1} (-1)^i q^{k^2-(k-i)^2} T'_{k-i-1}.
\]
Changing the index $i$ to $k-i$ in the above sum, we obtain that $T_k$ and
$T'_k$ satisfy the same recurrence relation. Since $T_0=T'_0=1$, we have
$T_k=T'_k$.
\end{proof}

\begin{figure}
  \centering
\begin{pspicture}(-3,-1.5)(15,6.5) 
\psgrid[gridcolor=gray](0,0)(12,5) \psset{linewidth=1.5pt}
\psline(0,0)(1,1)(3,1)(5,3)(7,3)(8,4)(9,4)(10,5)(12,5)
\uput[225](0,0){$(0,-j)$}
\uput[135](0,5){$(0,0)$}
\uput[45](12,5){$(k,0)$}
\uput[-45](12,0){$(k,-j)$}
\rput(3,3.5){\Large $R$}
\end{pspicture}%
\begin{pspicture}(0,-1.5)(1,6.5) 
\rput(.5, 2.5){$\Leftrightarrow$}
\end{pspicture}%
\begin{pspicture}(-3,-1.5)(15,6.5) 
\psgrid[gridcolor=gray](0,0)(12,5) \psset{linewidth=1.5pt}
\psline(0,0)(1,1)(3,1)(5,3)(7,3)(8,4)(9,4)(10,5)(12,5)
\psline(1,1)(5,5)
\psline(2,1)(6,5)
\psline(5,3)(7,5)
\psline(6,3)(8,5)
\psline(8,4)(9,5)
\psline(5,5)(10,5)
\psline(4,4)(8,4)
\psline(3,3)(5,3)
\psline(2,2)(4,2)
\pspolygon[linestyle=dashed, linecolor=blue](0,0)(5,5)(12,5)(7,0)
\psdot(0,0)
\psdot(5,5)
\psdot(12,5)
\psdot(7,0)
\uput[225](0,0){$(0,-j)$}
\uput[135](0,5){$(0,0)$}
\uput[45](12,5){$(k,0)$}
\uput[-45](12,0){$(k,-j)$}
\uput[90](5,5){$(j,0)$}
\uput[270](7,0){$(k-j,-j)$}
\end{pspicture}
\caption{An example of $p\in M[(b,k)\to(0,-j)]$. The region $S$ obtained from
  $R$ by removing the right triangle with three vertices $(0,0)$, $(0,-j)$, and
  $(j,0)$ can be identified with the partition $\lambda=(5,4,2,2)\subset
  B(j,k-j)$.}
  \label{fig:M-path}
\end{figure}
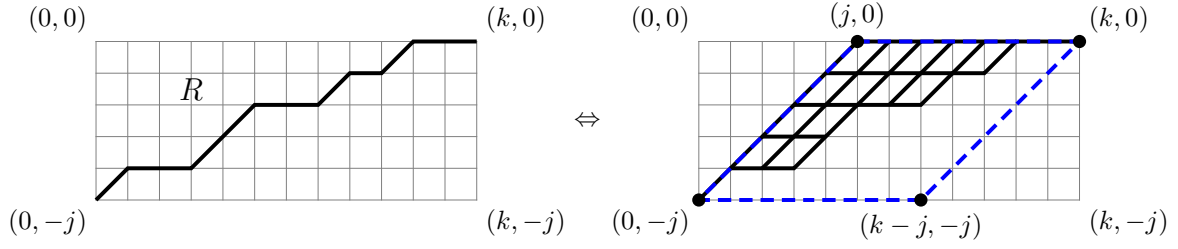
 
Suppose $p\in M[(k,0)\to(0,-j)]$. Then the region $R$ in \eqref{eq:4} contains
the right triangle with three vertices $(0,0)$, $(j,0)$, and $(0,-j)$ whose area
is $j^2$. If we remove this right triangle from $R$, the remaining region $S$
can be identified with a partition $\lambda\subset B(j,k-j)$ as shown in
Figure~\ref{fig:M-path}. Then we have $A(S) = 2|\lambda|$. Moreover, $s$ equals
the number of inner corners of $\lambda$, which is the number $\dist(\lambda)$
of distinct parts, and $V=1+t$ if $\lambda_j=0$, and $V=1$ if
$\lambda_j>0$. Therefore, we have
\begin{equation}
  \label{eq:5}
  w(p) = (-q)^{j^2} q^{2|\lambda|} (1-t^2)^{\dist(\lambda)} V,
\end{equation}
where $V=1+t$ if $\lambda_j=0$, and $V=1$ if $\lambda_j>0$.  Since 
$M[(b,k)\to(0,-j)]=\emptyset$ if $j>k$, we get
\begin{equation}
  \label{eq:6}
T_k = \sum_{j=0}^k (-q)^{j^2} \left( \sum_{\lambda\subset B(j,k-j)}
  q^{2|\lambda|} (1-t^2)^{\dist(\lambda)} + \sum_{\lambda\subset
    B(j-1,k-j)} t q^{2|\lambda|} (1-t^2)^{\dist(\lambda)} \right).
\end{equation}

\begin{lem}\label{lem:dist}
For nonnegative integers $m$ and $n$, we have
\[
\sum_{\lambda\subset B(m,n)} x^{\dist(\lambda)} q^{|\lambda|}  
 =\sum_{i=0}^m q^{\binom{i+1}{2}} \qbinom{n}{i} 
\qbinom{n+m-i}{m-i} (x-1)^i.
\]
\end{lem}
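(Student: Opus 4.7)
The plan is to expand in powers of $(x-1)$ on the left-hand side using the binomial identity $x^k=\sum_i \binom{k}{i}(x-1)^i$ with $k=\dist(\lambda)$, and then match coefficients of $(x-1)^i$ with the right-hand side. This reduces the claim to proving that, for each $i\geq 0$,
\[
\sum_{\lambda\subset B(m,n)} \binom{\dist(\lambda)}{i} q^{|\lambda|}
= q^{\binom{i+1}{2}} \qbinom{n}{i} \qbinom{n+m-i}{m-i}.
\]
The left-hand side now counts pairs $(\lambda, T)$ weighted by $q^{|\lambda|}$, where $T$ is an $i$-subset of the distinct positive parts of $\lambda$.

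Next I would exhibit a size-preserving bijection between these pairs and pairs $(\mu,\nu)$, where $\mu=(c_1>c_2>\cdots>c_i\geq 1)$ is a strict partition with parts in $\{1,\dots,n\}$ and $\nu\subset B(m-i,n)$, with $|\lambda|=|\mu|+|\nu|$. Given $(\lambda,T)$ with $T=\{c_1>\cdots>c_i\}$, the map sends it to $\mu=(c_1,\dots,c_i)$ together with the partition $\nu$ obtained from $\lambda$ by deleting exactly one row of length $c_j$ for each $j$. The inverse merges the parts of $\mu$ into $\nu$ by sorting the combined multiset of parts to recover $\lambda$, and takes $T$ to be the underlying set of $\mu$; both directions clearly preserve sizes and land in the prescribed sets.

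Finally, the right-hand side factors as expected: the sum $\sum_\mu q^{|\mu|}$ over strict partitions $\mu$ with $i$ parts in $\{1,\dots,n\}$ equals $q^{\binom{i+1}{2}}\qbinom{n}{i}$ via the standard shift $\mu_j=\alpha_j+(i-j+1)$ with $\alpha\subset B(i,n-i)$, while $\sum_\nu q^{|\nu|}$ over $\nu\subset B(m-i,n)$ equals $\qbinom{n+m-i}{m-i}$ by \eqref{eq:box}. The only step requiring real care is verifying that each $c_j\in T$ really is a distinct part of $\lambda$, so that the row deletions are well-defined and leave a partition in $B(m-i,n)$, and conversely that re-inserting the $c_j$'s into $\nu$ by sorting always produces a partition in $B(m,n)$ containing every $c_j$ as a part; both are immediate from the definition of $T$, so no serious obstacle is expected.
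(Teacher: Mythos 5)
Your proof is correct, and it takes a genuinely different (though related) route from the paper's. Both arguments hinge on expanding in powers of $(x-1)$, but the paper gets the coefficient of $(x-1)^i$ by introducing an auxiliary variable $y$ marking the number of parts, writing $\sum_{\lambda\in P_n} y^{\ell(\lambda)} x^{\dist(\lambda)} q^{|\lambda|}=\prod_{i=1}^n\bigl(1+y(x-1)q^i\bigr)\prod_{j=1}^n (1-yq^j)^{-1}$, applying the $q$-binomial theorem to the first product, and then extracting $[y^{\leq m}]$; you instead write $x^{\dist(\lambda)}=\sum_i\binom{\dist(\lambda)}{i}(x-1)^i$ and prove the resulting identity $\sum_{\lambda\subset B(m,n)}\binom{\dist(\lambda)}{i}q^{|\lambda|}=q^{\binom{i+1}{2}}\qbinom{n}{i}\qbinom{n+m-i}{m-i}$ by a direct size-preserving bijection $(\lambda,T)\leftrightarrow(\mu,\nu)$ that strips one row of each selected length. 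Your bijection is sound: deleting one row of each length in $T$ is well defined on the multiset of parts, lands in $B(m-i,n)$, and re-sorting the merged multiset inverts it (even when $\nu$ retains parts equal to some $c_j$, since a partition is determined by its multiset of parts). What your approach buys is a self-contained combinatorial argument that avoids the $q$-binomial theorem and the coefficient-extraction operator $[y^{\leq m}]$; what the paper's approach buys is brevity and a uniform generating-function framework in which the finite-product manipulation does the bookkeeping automatically. In essence your bijection is the combinatorial shadow of the paper's algebra: the strict partition $\mu$ realizes the factor $q^{\binom{i+1}{2}}\qbinom{n}{i}$ coming from the $q$-binomial theorem, and $\nu\subset B(m-i,n)$ realizes the truncated geometric product.
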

\begin{proof}
  Let $P_n$ denote the set of partitions such that the largest part is at most
  $n$ and every part is nonzero. It is not hard to see that
\[
\sum_{\lambda\in P_n} y^{\ell(\lambda)} x^{\dist(\lambda)} q^{|\lambda|} 
=\prod_{i=1}^n \left( 1+ \frac{yxq^i}{1-yq^i} \right)
= \prod_{i=1}^n \left( 1+ y(x-1)q^i\right) \prod_{j=1}^n \frac{1}{1-yq^j},
\]
where $\ell(\lambda)$ is the number of parts of $\lambda$.  Then by the
$q$-binomial theorem \cite[Exercise~1.2 (vi)]{Gasper2004}, we have
\[
\prod_{i=1}^n \left( 1+ y(x-1)q^i\right)
=\sum_{i=0}^n q^{\binom{i+1}{2}} \qbinom{n}{i} y^i (x-1)^i.
\]
Since the condition $\lambda\subset B(m,n)$ is equivalent to $\lambda\in P_n$
with $\ell(\lambda)\leq m$, we have
\begin{align*}
\sum_{\lambda\subset B(m,n)} x^{\dist(\lambda)} q^{|\lambda|}   &=
[y^{\leq m}] \left( 
\sum_{\lambda\in P_n} y^{\ell(\lambda)} x^{\dist(\lambda)} q^{|\lambda|} \right)\\  
&=[y^{\leq m}] \left( 
\sum_{i=0}^n q^{\binom{i+1}{2}} \qbinom{n}{i} y^i (x-1)^i 
 \prod_{j=1}^n \frac{1}{1-yq^j}
\right)\\
&= \sum_{i=0}^{\min(m,n)} q^{\binom{i+1}{2}} \qbinom{n}{i} (x-1)^i 
\cdot [y^{\leq m-i}] \left( 
\prod_{j=1}^n \frac{1}{1-yq^j}
\right),
\end{align*}
where $[y^{\leq m}] f(y)$ means the sum of the coefficients of $y^j$ in $f(y)$
for $j\leq m$. Note that it is no harm to replace $\min(m,n)$ with $m$ in the
last sum of the above equation. Since
\[
[y^{\leq m-i}] \left( \prod_{j=1}^n \frac{1}{1-yq^j} \right)
= \sum_{\lambda\subset B(m-i, n)} q^{|\lambda|}
=\qbinom{n+m-i}{m-i},
\] 
we are done.
\end{proof}

Now we can complete the proof of Corollary~\ref{cor:Tk}.

\begin{proof}[Proof of Corollary~\ref{cor:Tk}]
  Applying Lemma~\ref{lem:dist} to \eqref{eq:6}, we obtain that $T_k$ is equal
  to
\begin{align*}
&\sum_{j=0}^k (-q)^{j^2} \left(
\sum_{i=0}^j q^{i^2+i} \Qbinom{k-j}{i}{q^2} \Qbinom{k-i}{j-i}{q^2}(-t^2)^i
+ \sum_{i=0}^{j-1} t q^{i^2+i} 
\Qbinom{k-j}{i}{q^2} \Qbinom{k-i-1}{j-i-1}{q^2}(-t^2)^i
\right),
\end{align*}
which gives the desired formula.
\end{proof}

\section{Self-conjugate overpartitions}
\label{sec:self-conj-overp}

In this section we will express the sum $T_k(t,q)$ in the previous section using
overpartitions. Overpartitions were first introduced by Corteel and Lovejoy
\cite{Corteel2004}. We define overpartitions in a slightly different way, but it
should be clear that the two definitions are equivalent. 

\begin{defn}
  An \emph{overpartition} is a partition in which each inner corner may be
  marked. For an overpartition $\lambda$, we define the \emph{conjugate} of
  $\lambda$ in the natural way: the partition is transposed and the cell $(i,j)$
  is marked if and only if the cell $(j,i)$ is marked in $\lambda$, see
  Figure~\ref{fig:overpartition}.  A \emph{self-conjugate overpartition} is an
  overpartition whose conjugate is equal to itself. We denote by $\sop(k)$ the
  set of self-conjugate overpartitions whose underlying partitions are contained
  in $B(k,k)$. A \emph{diagonal cell} is the $(i,i)$-cell for some $i$.  For an
  overpartition $\lambda$, the number of diagonal cells is denoted by
  $\diag(\lambda)$, and the number of marked cells is denoted by
  $\mk(\lambda)$. The \emph{main diagonal} is the infinite set of $(i,i)$-cells
  (not necessarily contained in $\lambda$) for all $i$.
\end{defn}

 \begin{figure}
   \centering
\begin{pspicture}(0,0)(5,-5)
   \psline(5,0)(0,0)(0,-4)
   \cell(1,1)[] \cell(1,2)[]   \cell(1,3)[]   \cell(1,4)[]   \cell(1,5)[]
   \cell(2,1)[] \cell(2,2)[]   \cell(2,3)[]   \cell(2,4)[]  
   \cell(3,1)[] \cell(3,2)[]   
   \cell(4,1)[] 
   \mcell(2,4) \mcell(4,1)
\end{pspicture}%
\begin{pspicture}(0,0)(6,-5) 
\rput(3, -3){$\Leftrightarrow$}
\rput(3,-2){conjugate}
\end{pspicture}%
 \begin{pspicture}(0,0)(5,-5)
   \psline(4,0)(0,0)(0,-5)
   \cell(1,1)[] \cell(1,2)[]   \cell(1,3)[]   \cell(1,4)[]   
   \cell(2,1)[] \cell(2,2)[]   \cell(2,3)[]   
   \cell(3,1)[] \cell(3,2)[]   
   \cell(4,1)[] \cell(4,2)[]
   \cell(5,1)[] 
   \mcell(1,4) \mcell(4,2)
\end{pspicture}
   \caption{An overpartition and its conjugate}
   \label{fig:overpartition}
 \end{figure}
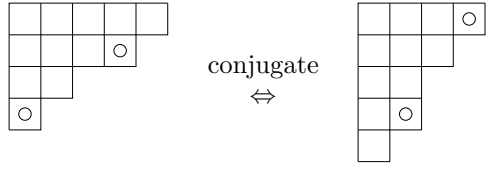

\begin{figure}
   \centering
\begin{pspicture}(0,1.5)(5,-8) 
\psline(3,0)(0,0)(0,-4)
   \cell(1,1)[] \cell(1,2)[]   \cell(1,3)[]  
   \cell(2,1)[] \cell(2,2)[] 
   \cell(3,1)[] \cell(3,2)[]   
   \cell(4,1)[] \mcell(3,2) \mcell(1,3)
\end{pspicture}
\begin{pspicture}(-3,1.5)(8,-8) 
\rput(-3,-2){$\Rightarrow$}
\psframe(0,0)(5,-5) 
\rput(5,0){\psline(3,0)(0,0)
   \cell(1,1)[] \cell(1,2)[]   \cell(1,3)[]  
   \cell(2,1)[] \cell(2,2)[] 
   \cell(3,1)[] \cell(3,2)[]   
   \cell(4,1)[] \mcell(3,2) \mcell(1,3)
}
\rput(0,-5){\psline(0,0)(0,-3)
   \cell(1,1)[] \cell(1,2)[]   \cell(1,3)[]  \cell(1,4)[]  
   \cell(2,1)[] \cell(2,2)[]  \cell(2,3)[]  
   \cell(3,1)[] \mcell(2,3) \mcell(3,1)
}
 \upbrace(2.5,0){5}[$j$]
 \leftbrace(0,-2.5){5}[$j$]
\end{pspicture}
\caption{The construction of $\nu\in\sop(k)$ from an overpartition $\lambda$
  whose underlying partition is contained in $B(j,k-j)$.}
   \label{fig:self-conjugate}
 \end{figure}
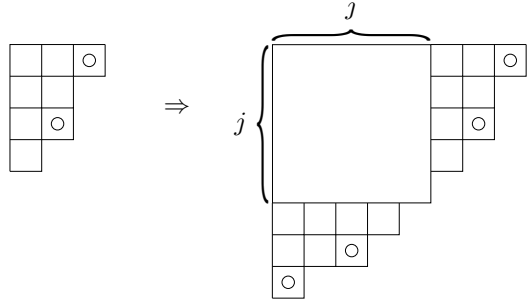

 Recall that by Lemma~\ref{lem:path} and \eqref{eq:5} we have
 \begin{equation}
   \label{eq:7}
T_k(t,q)=\sum_{j=0}^k \sum_{\lambda \subset B(j,k-j)} (-q)^{j^2} q^{2|\lambda|}
(1-t^2)^{\dist(\lambda)} V,
\end{equation}
where $\dist(\lambda)$ is the number of distinct parts of $\lambda$, and $V=1+t$
if $\lambda_j=0$, and $V=1$ if $\lambda_j>0$. Since $\dist(\lambda)$ is equal to
the number of inner corners of $\lambda$, the factor $(1-t^2)^{\dist(\lambda)}$
in \eqref{eq:7} can be understood as marking each inner corner or not. Thus
\eqref{eq:7} can be rewritten as
\begin{equation}
  \label{eq:8}
T_k(t,q)=\sum_{j=0}^k \sum_{\lambda} (-1)^{j+\mk(\lambda)} t^{2\mk(\lambda)} 
q^{j^2} q^{2|\lambda|}V,
\end{equation}
where the latter sum is over all overpartitions $\lambda$ whose underlying
partitions are contained in $B(j,k-j)$. For such an overpartition $\lambda$, we
construct $\nu\in \sop(k)$ which is obtained from the box $B(j,j)$ by attaching
$\lambda$ to the right of the box and its conjugate to the bottom of the box as
shown in Figure~\ref{fig:self-conjugate}. Then $\nu$ always has even number of
marked cells and 
\[
(-1)^{j+\mk(\lambda)} t^{2\mk(\lambda)} 
q^{j^2} q^{2|\lambda|} = 
(-1)^{\diag(\nu)+\frac{\mk(\nu)}2} t^{\mk(\nu)} q^{|\nu|}.
\]
On the other hand, in \eqref{eq:8} $V=1+t$ if $\lambda_j =0$, and $V=1$ if
$\lambda_j>0$, equivalently, $V=1+t$ if $\nu$ has an inner corner on the main
diagonal, and $V=1$ otherwise.  Considering $V=1+ t$ as marking the diagonal
inner corner or not, we can express $T_k(t,q)$ as follows.

\begin{thm}\label{thm:sop}
We have
\[
T_k(t,q)=\sum_{\nu \in \sop(k)} (-1)^{\diag(\nu)+\flr{\frac{\mk(\nu)}2}} 
 t^{\mk(\nu)} q^{|\nu|}.
\]
\end{thm}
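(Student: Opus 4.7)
The plan is to deduce the theorem directly from equation~\eqref{eq:8} by means of the bijective construction already sketched in Figure~\ref{fig:self-conjugate}, reinterpreting the factor $V$ as the choice of whether to mark a diagonal inner corner. The bulk of the work is a careful check that all statistics and signs line up under this correspondence.

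I would start from
\[
T_k(t,q)=\sum_{j=0}^k \sum_{\lambda} (-1)^{j+\mk(\lambda)} t^{2\mk(\lambda)} q^{j^2+2|\lambda|} V,
\]
where $\lambda$ ranges over overpartitions whose underlying partition lies in $B(j,k-j)$, and $V=1+t$ if $\lambda_j=0$ while $V=1$ otherwise. To each such $\lambda$ I associate $\nu\in\sop(k)$ by placing the block $B(j,j)$ in the upper-left corner, attaching $\lambda$ to its right, and attaching the conjugate of $\lambda$ below, with marked cells inherited symmetrically. By construction $\nu$ is a self-conjugate overpartition contained in $B(k,k)$ with $\diag(\nu)=j$, $|\nu|=j^2+2|\lambda|$, and (before any optional diagonal mark) $\mk(\nu)=2\mk(\lambda)$, since the marks of $\lambda$ pair off with their reflections across the main diagonal. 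The $V=1$ contribution then matches term-by-term via
\[
(-1)^{j+\mk(\lambda)} t^{2\mk(\lambda)} q^{j^2+2|\lambda|}
=(-1)^{\diag(\nu)+\flr{\mk(\nu)/2}} t^{\mk(\nu)} q^{|\nu|}.
\]

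The key step, which I expect to require the most careful verification, is the interpretation of the $V=1+t$ contribution as the choice of marking a diagonal cell. I would show that $\lambda_j=0$ is exactly the condition that the $(j,j)$-cell is an inner corner of $\nu$ lying on the main diagonal: for $i<j$ one has $\nu_i=j+\lambda_i>i$, and for $i>j$ one has $\nu_i=\lambda^{\tr}_{i-j}\le j<i$, so $(j,j)$ is the only diagonal cell that can ever be an inner corner of $\nu$, and it is one precisely when $\lambda_j=0$. Optionally marking this cell increases $\mk(\nu)$ from the even value $2\mk(\lambda)$ to $2\mk(\lambda)+1$, which leaves $\flr{\mk(\nu)/2}$ unchanged while multiplying the weight by one extra factor of $t$, matching the ``$t$'' summand of $V=1+t$.

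Finally I would verify that the assignment $(j,\lambda,\text{diagonal-mark choice})\mapsto\nu$ is a bijection onto $\sop(k)$. Given $\nu\in\sop(k)$, the integer $j$ is recovered as $\diag(\nu)$ (noting that the diagonal marking, when present, does not affect the count of diagonal cells of the underlying partition), the diagonal-mark choice records whether the unique possible diagonal inner corner of $\nu$ is marked, and $\lambda$ is the sub-overpartition lying strictly to the right of the main diagonal, with marks inherited from $\nu$. Summing over all triples then reorganizes~\eqref{eq:8} into the claimed sum over $\sop(k)$, completing the proof.
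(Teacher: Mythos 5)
Your proposal is correct and follows exactly the paper's own argument: the paper likewise derives the theorem from \eqref{eq:8} by gluing $B(j,j)$, $\lambda$, and its conjugate into $\nu\in\sop(k)$, checking that $\diag(\nu)=j$, $|\nu|=j^2+2|\lambda|$, $\mk(\nu)=2\mk(\lambda)$, and interpreting $V=1+t$ as the optional marking of the unique diagonal inner corner (which exists precisely when $\lambda_j=0$). Your write-up simply makes explicit the verifications that the paper leaves to the reader.
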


We close this section by finding a functional equation for $T_k(t,q)$ which will
serve as a recurrence relation in the next section.

\begin{cor}\label{cor:rec_T}
  For $k\geq1$, we have
\[
(1-tq) T_k(tq,q) = T_k(t,q) + t^2 q^{2k+1} T_{k-1}(t,q).
\]
\end{cor}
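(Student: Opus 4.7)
My plan is to prove the identity by induction on $k$, using the recurrence \eqref{eq:rec_Tk} as the main tool. The base case $k = 1$ is direct: from \eqref{eq:rec_Tk}, $T_1(t,q) = 1 - q - tq$, so $(1-tq)T_1(tq,q) = (1-tq)(1 - q - tq^2) = 1 - q - tq + t^2 q^3$, which matches $T_1(t,q) + t^2 q^3 T_0(t,q)$.

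For the inductive step, I would first apply \eqref{eq:rec_Tk} with $t \mapsto tq$ and multiply through by $(1 - tq)$, using $(1-tq)(1+tq) = 1 - t^2 q^2$ to obtain
\[
(1-tq) T_k(tq, q) = (1-tq) T_{k-1}(tq, q) + (1-t^2q^2)(-q)^{k^2} + (1-t^2q^2) \sum_{i=1}^{k-1} (-q)^{k^2 - i^2}\, (1-tq) T_{i-1}(tq, q).
\]
The induction hypothesis rewrites each $(1 - tq) T_{i-1}(tq, q)$ for $i \geq 2$ as $T_{i-1}(t,q) + t^2 q^{2i-1} T_{i-2}(t,q)$, while $(1-tq) T_0(tq, q) = 1 - tq$ handles $i = 1$. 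In parallel, I would expand both $T_k(t,q)$ and the inner $T_{k-1}(t,q)$ (appearing in the term $t^2 q^{2k+1} T_{k-1}(t,q)$) using \eqref{eq:rec_Tk}.

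Two algebraic identities then carry the rest:
\[
t^2 q^{2j+1}(-q)^{k^2-(j+1)^2} = -t^2 (-q)^{k^2-j^2}, \qquad t^2 q^{2k+1}(-q)^{(k-1)^2-i^2} = -t^2 q^2 (-q)^{k^2-i^2},
\]
both coming from the parity flip $(-1)^{(a+1)^2-a^2} = -1$ and the exponent collapse $(2a+1)+a^2-(a+1)^2 = 0$. After substituting these and reindexing $j = i-1$ in the $T_{i-2}$ sum on the left-hand side, both sides of the claimed equation become linear combinations of $T_{k-1}(t,q)$, $T_{k-2}(t,q)$, the generic $T_{i-1}(t,q)$ for $2 \leq i \leq k-2$, and scalar multiples of $(-q)^{k^2}$ and $(-q)^{k^2-1}$.

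The main obstacle is the careful bookkeeping of sum ranges and boundary contributions at $i = 1$, $i = k-1$, and the recursive expansion's $i = 0$ endpoint. After combining: the coefficients of $T_{k-1}(t,q)$ are both $1$; the $T_{k-2}(t,q)$ coefficients both simplify to $q^{2k-1}(t^2 + t^2 q^2 - 1)$; the generic $T_{i-1}(t,q)$ coefficients for the middle range both equal $(1-t^2)(1-t^2q^2)(-q)^{k^2-i^2}$; and the pure scalar parts telescope on both sides to $(1-t^2q^2)(-q)^{k^2-1}(1 - q - tq - t^2)$. A more conceptual proof via Theorem~\ref{thm:sop} and a sign-reversing involution on self-conjugate overpartitions could be sought, but the direct inductive computation above is short once the two identities are noted.
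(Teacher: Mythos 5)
Your proof is correct, but it takes a genuinely different route from the paper's. The paper derives this functional equation combinatorially from Theorem~\ref{thm:sop}: it realizes $(1-tq)T_k(tq,q)$ as a weighted sum over self-conjugate overpartitions carrying an optional special mark at the diagonal corner, cancels part of that sum with a sign-reversing involution on the set of configurations with a singly-marked diagonal inner corner, and then pushes the remaining marks outward by one cell to land bijectively on $\sop(k)$ together with a copy of $\sop(k-1)$ weighted by $t^2q^{2k+1}$ (obtained by stripping the marked first row and column). You instead run a strong induction on $k$ using only the recurrence \eqref{eq:rec_Tk}: substitute $t\mapsto tq$, distribute $(1-tq)$ onto the inner terms $T_{i-1}(tq,q)$, apply the induction hypothesis, and match coefficients. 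I verified your two sign/exponent identities and all four claimed coefficient matches: the coefficient of $T_{k-1}$ is $1$ on both sides, that of $T_{k-2}$ is $q^{2k-1}(t^2+t^2q^2-1)$ on both sides, the generic coefficient of $T_{i-1}$ is $(1-t^2)(1-t^2q^2)(-q)^{k^2-i^2}$ on both sides, and both scalar parts reduce to $(1-t^2q^2)(-q)^{k^2-1}(1-q-tq-t^2)$; the base case $T_1(t,q)=1-q-tq$ also checks out. Your argument is more elementary and self-contained---it shows the functional equation is a formal consequence of \eqref{eq:rec_Tk} alone, with no need for the lattice-path or overpartition models of Sections~\ref{sec:proofs} and \ref{sec:self-conj-overp}---at the cost of opaque bookkeeping; the paper's proof buys a combinatorial explanation of where the two terms on the right-hand side come from, which is in keeping with the theme of Section~\ref{sec:self-conj-overp}.
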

\begin{proof}
  For $\nu\in \sop(k)$, let
  $\omega(\nu)=(-1)^{\diag(\nu)+\flr{\frac{\mk(\nu)}2}} t^{\mk(\nu)} q^{|\nu|}$. Then
\[
T_k(t,q)=\sum_{\nu \in \sop(k)} \omega(\nu).
\]

We can think of $\omega(\nu)$ as the product of the weight of the cells and
marks in $\nu$, which are defined as follows:
\begin{enumerate}
\item every non-diagonal cell has weight $q$,
\item every diagonal cell has weight $-q$,
\item every mark above the main diagonal has weight $-t$, and
\item every mark below or on the main diagonal has weight $t$.
\end{enumerate}

In order to express the left hand side of the equation we define $\sop'(k)$ to
be the set of $\nu\in\sop(k)$ in which the unique corner on the main diagonal
may have a special mark. Note that the corner of the main diagonal can be an
inner corner or an outer corner depending on $\nu$, and if it is an inner
corner, then this corner may have two marks, one is non-special and the other is
special.  For $\nu\in \sop'(k)$, we define $\omega'(\nu)$ to be the product of
weights of the cells and marks, which are defined as follows:
\begin{enumerate}
\item every non-diagonal cell has weight $q$,
\item every diagonal cell has weight $-q$,
\item every mark above the main diagonal has weight $-tq$,
\item every mark below or on the main diagonal has weight $tq$, and
\item if there is a special mark, it has weight $-tq$.
\end{enumerate}
It is easy to see that
\[
(1-tq)T_k(tq,q)=\sum_{\nu \in \sop'(k)} \omega'(\nu).
\]

Let $X$ be the set of $\nu \in \sop'(k)$ which has an inner corner on the main
diagonal with only one mark. For $\nu\in X$, we define $\nu'$ to be the element
in $X$ that is obtained by switching the mark in the inner corner on the main
diagonal to special one or non-special one. It is clear that
$\omega'(\nu')=-\omega'(\nu)$. Thus the sum of $\omega'(\nu)$ for all $\nu\in X$
is zero and we get
\[
(1-tq)T_k(tq,q)=\sum_{\nu \in \sop'(k)\setminus X} \omega'(\nu).
\]

Now suppose $\nu\in \sop'(k)\setminus X$. For each mark above (resp.~below) the
main diagonal, if it is in Row $i$ (resp.~Column $i$), delete the mark and add a
cell in Row $i+1$ (resp.~Column $i+1$) and mark the new cell. If there is a
special mark in the outer corner on the diagonal, then add a cell to $\nu$ to
fill this outer corner and change the special mark to a non-special mark, see
Figure~\ref{fig:outer}. If there are one non-special mark and one special mark
in the inner corner on the main diagonal, which is in Row $i$ and Column $i$,
then delete the two marks, add one cell to Row $i+1$ and one cell to Column
$i+1$, and mark the two new cells, see Figure~\ref{fig:inner}. Let $\mu$ be the
resulting overpartition. From the construction it is clear that $\omega'(\nu) =
\omega(\mu)$. Also, it is not hard to see that $\mu$ is an element in $\sop(k)$
or an element in $\sop(k+1)$. Moreover, if $\mu\in\sop(k+1)$, the $(1,k+1)$-cell
and the $(k+1,1)$-cell of $\mu$ are marked inner corners, and the overpartition
$\mu'$ obtained from $\mu$ by deleting Row $1$ and Column $1$ satisfies $\mu'\in
\sop(k-1)$ and $\omega(\mu)=t^2 q^{2k+1} \omega(\mu')$. Note that the sign does
not change because $\mu'$ has one less diagonal cells and two less marks than
$\mu$.  Thus we have
\[
\sum_{\nu \in \sop'(k)\setminus X} \omega'(\nu) = 
\sum_{\nu \in \sop(k)} \omega(\nu)
+ t^2q^{2k+1} \sum_{\nu \in \sop(k-1)} \omega(\nu),
\]
which finishes the proof.
\end{proof}

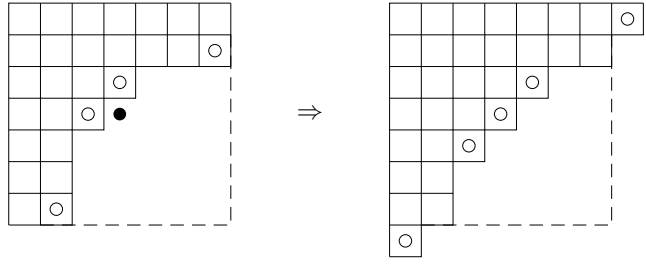
\begin{figure}
  \centering
\begin{pspicture}(0,0)(8,-8)
\cell(1,1)[]\cell(1,2)[]\cell(1,3)[]\cell(1,4)[]\cell(1,5)[]\cell(1,6)[]\cell(1,7)[]
\cell(2,1)[]\cell(2,2)[]\cell(2,3)[]\cell(2,4)[]\cell(2,5)[]\cell(2,6)[]\cell(2,7)[]  
\cell(3,1)[]\cell(3,2)[]\cell(3,3)[]\cell(3,4)[]   
\cell(4,1)[]\cell(4,2)[]\cell(4,3)[] 
\cell(5,1)[]\cell(5,2)[]
\cell(6,1)[]\cell(6,2)[]
\cell(7,1)[]\cell(7,2)[]
\mcell(2,7) \mcell(7,2) \mcell(3,4) \mcell(4,3) \smcell(4,4)
\psline(0,-7)(0,0)(7,0) \psline[linestyle=dashed](1,-7)(7,-7)(7,-1)
\end{pspicture}%
\begin{pspicture}(0,0)(4,-8)
\rput(1.5,-3.5){$\Rightarrow$}
\end{pspicture}%
\begin{pspicture}(0,0)(8,-8)
\cell(1,1)[]\cell(1,2)[]\cell(1,3)[]\cell(1,4)[]\cell(1,5)[]\cell(1,6)[]\cell(1,7)[]\cell(1,8)[]
\cell(2,1)[]\cell(2,2)[]\cell(2,3)[]\cell(2,4)[]\cell(2,5)[]\cell(2,6)[]\cell(2,7)[]    
\cell(3,1)[]\cell(3,2)[]\cell(3,3)[]\cell(3,4)[]\cell(3,5)[]  
\cell(4,1)[]\cell(4,2)[]\cell(4,3)[]\cell(4,4)[] 
\cell(5,1)[]\cell(5,2)[]\cell(5,3)[]
\cell(6,1)[]\cell(6,2)[]
\cell(7,1)[]\cell(7,2)[]
\cell(8,1)[]
\mcell(1,8) \mcell(8,1) \mcell(3,5) \mcell(5,3) \mcell(4,4)
\psline(0,-8)(0,0)(8,0) \psline[linestyle=dashed](1,-7)(7,-7)(7,-1)
\end{pspicture}%
\caption{Moving the marks in $\nu\in\sop'(k)$ when there is a special mark in
   the outer corner on the main diagonal.}
  \label{fig:outer}
\end{figure}

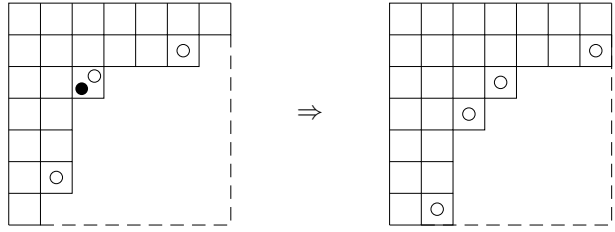
\begin{figure}
  \centering
\begin{pspicture}(0,0)(8,-7)
\cell(1,1)[]\cell(1,2)[]\cell(1,3)[]\cell(1,4)[]\cell(1,5)[]\cell(1,6)[]\cell(1,7)[]
\cell(2,1)[]\cell(2,2)[]\cell(2,3)[]\cell(2,4)[]\cell(2,5)[]\cell(2,6)[]  
\cell(3,1)[]\cell(3,2)[]\cell(3,3)[]
\cell(4,1)[]\cell(4,2)[]
\cell(5,1)[]\cell(5,2)[]
\cell(6,1)[]\cell(6,2)[] 
\cell(7,1)[]
\mcell(2,6) \mcell(6,2) 
\rput(.2,.2){\mcell(3,3)}
\rput(-.2,-.2){\smcell(3,3)}
\psline(0,-7)(0,0)(7,0) \psline[linestyle=dashed](1,-7)(7,-7)(7,-1)
\end{pspicture}%
\begin{pspicture}(0,0)(4,-7)
\rput(1.5,-3.5){$\Rightarrow$}
\end{pspicture}%
\begin{pspicture}(0,0)(8,-7)
\cell(1,1)[]\cell(1,2)[]\cell(1,3)[]\cell(1,4)[]\cell(1,5)[]\cell(1,6)[]\cell(1,7)[]
\cell(2,1)[]\cell(2,2)[]\cell(2,3)[]\cell(2,4)[]\cell(2,5)[]\cell(2,6)[]\cell(2,7)[]
\cell(3,1)[]\cell(3,2)[]\cell(3,3)[]\cell(3,4)[]
\cell(4,1)[]\cell(4,2)[]\cell(4,3)[]
\cell(5,1)[]\cell(5,2)[]
\cell(6,1)[]\cell(6,2)[]
\cell(7,1)[]\cell(7,2)[]
\mcell(2,7) \mcell(7,2) \mcell(3,4) \mcell(4,3)
\psline(0,-7)(0,0)(7,0) \psline[linestyle=dashed](1,-7)(7,-7)(7,-1)
\end{pspicture}%
 \caption{Moving the marks in $\nu\in\sop'(k)$ when there is a special mark in
   the inner corner on the main diagonal.}
  \label{fig:inner}
\end{figure}

\section{Another formula for $T_k(\pm q^r,q)$}
\label{sec:another_formula}

In this section we will find another formula for $T_k(t,q)$ when $t=\pm q^r$ for
any integer $r$.  To this end we need to divide the cases when $r\geq0$ and
$r\leq 0$.  For a sign $\epsilon\in \{+,-\}$, and nonnegative integers $b$ and
$k$, we define
\[
\aep(b,k) = T_k(\epsilon q^b, q), \qquad \bep(b,k) = T_k(\epsilon q^{-b}, q).
\]
Note that for $b\geq0$, we have
\begin{equation}
  \label{eq:initial k=0}
\aep(b,0)  = \bep(b,0)  =1.
\end{equation}
Recall that from the recurrence \eqref{eq:rec_Tk} of $T_k(t,q)$, we immediately
get $T_k(-1,q)=1$ and $T_k(1,q)=\sum_{i=-k}^k (-q)^{i^2}$.  Thus we have
\begin{equation}
  \label{eq:(0,k)-}
\alpha_-(0,k) = \beta_-(0,k) = T_k(-1,q) = 1,
\end{equation}
\begin{equation}
  \label{eq:(0,k)+}
\alpha_+(0,k) = \beta_+(0,k) = T_k(1,q) = \sum_{i=-k}^k (-q)^{i^2}.
\end{equation}
  
Substituting $t=\epsilon q^{b-1}$ in Corollary~\ref{cor:rec_T}, we obtain
\[
(1-\epsilon q^b) T_k(\epsilon q^b,q) 
= T_k(\epsilon q^{b-1},q) +q^{2k+2b-1} T_{k-1}(\epsilon q^{b-1},q).
\]
If $b\geq1$, we can divide the both sides of the above equation by $1-\epsilon
q^b$  to get the following lemma.

\begin{lem}\label{lem:aep}
For integers $b,k\geq1$, we have
\[
\aep(b,k)= \frac{1}{1-\epsilon q^b} \aep(b-1,k)
+\frac{q^{2k+2b-1}}{1-\epsilon q^b} \aep(b-1,k-1).
\]  
\end{lem}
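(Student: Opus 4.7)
The plan is to derive this identity as a direct specialization of the functional equation in Corollary~\ref{cor:rec_T}. Recall that corollary states
\[
(1-tq)\,T_k(tq,q) = T_k(t,q) + t^2 q^{2k+1}\,T_{k-1}(t,q),
\]
which holds for any $t$ and any $k\geq 1$. The idea is to substitute $t = \epsilon q^{b-1}$ so that $tq = \epsilon q^b$, and observe that since $\epsilon \in \{+,-\}$ we have $\epsilon^2 = 1$, hence $t^2 = q^{2b-2}$. Combining this with the $q^{2k+1}$ factor gives $q^{2k+2b-1}$.

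Carrying out the substitution yields
\[
(1 - \epsilon q^b)\,T_k(\epsilon q^b,q)
= T_k(\epsilon q^{b-1},q) + q^{2k+2b-1}\,T_{k-1}(\epsilon q^{b-1},q).
\]
Since $b\geq 1$, the factor $1 - \epsilon q^b$ is a nonzero element of the ring of formal power series in $q$ (its constant term is $1$ when $\epsilon = -$, and the expression is a legitimate rational function when $\epsilon = +$), so we may divide both sides by it. Translating back via the definitions $\aep(b,k) = T_k(\epsilon q^b,q)$, $\aep(b-1,k) = T_k(\epsilon q^{b-1},q)$, and $\aep(b-1,k-1) = T_{k-1}(\epsilon q^{b-1},q)$ gives exactly the claimed identity. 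There is no real obstacle here: the lemma is essentially a notational repackaging of Corollary~\ref{cor:rec_T}, whose combinatorial content (the sign-reversing arguments on self-conjugate overpartitions) has already been established.
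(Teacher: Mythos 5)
Your proposal is correct and follows exactly the paper's route: the text preceding Lemma~\ref{lem:aep} makes the same substitution $t=\epsilon q^{b-1}$ into Corollary~\ref{cor:rec_T}, uses $\epsilon^2=1$ to get the exponent $2k+2b-1$, and divides by $1-\epsilon q^b$ for $b\geq 1$. Nothing is missing.
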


Substituting $t=\epsilon q^{-b}$ in Corollary~\ref{cor:rec_T}, we obtain
\[
(1-\epsilon q^{1-b}) T_k(\epsilon q^{1-b},q) 
= T_k(\epsilon q^{-b},q) +q^{2k-2b+1} T_{k-1}(\epsilon q^{-b},q),
\]
which implies the following lemma.

\begin{lem}\label{lem:bep}
For integers $b,k\geq1$, we have
\[
\bep(b,k)= (1-\epsilon q^{1-b}) \bep(b-1,k) - q^{2k-2b+1} \bep(b,k-1).
\]  
\end{lem}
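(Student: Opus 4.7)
The plan is to derive Lemma~\ref{lem:bep} by a direct substitution into the functional equation of Corollary~\ref{cor:rec_T}, exactly paralleling the derivation of Lemma~\ref{lem:aep}. First I would set $t = \epsilon q^{-b}$ in the identity
\[
(1-tq)T_k(tq,q) = T_k(t,q) + t^2 q^{2k+1} T_{k-1}(t,q).
\]
The key simplification is that $tq = \epsilon q^{1-b}$ and, since $\epsilon^2=1$, the factor $t^2$ becomes $q^{-2b}$, so that $t^2 q^{2k+1} = q^{2k-2b+1}$. This yields the preliminary identity stated just before the lemma:
\[
(1-\epsilon q^{1-b}) T_k(\epsilon q^{1-b},q) = T_k(\epsilon q^{-b},q) + q^{2k-2b+1} T_{k-1}(\epsilon q^{-b},q).
\]

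Next, I would translate each $T_\bullet$ back into the $\bep$-notation via the definitions $\bep(b,k) = T_k(\epsilon q^{-b},q)$, noting that $T_k(\epsilon q^{1-b},q) = T_k(\epsilon q^{-(b-1)},q) = \bep(b-1,k)$ is valid for $b\ge 1$ since then $b-1\ge 0$. This rewrites the identity as
\[
(1-\epsilon q^{1-b}) \bep(b-1,k) = \bep(b,k) + q^{2k-2b+1} \bep(b,k-1).
\]
Finally, solving for $\bep(b,k)$ gives precisely the claimed recurrence. Unlike the $\aep$ case, no division is required here, so the hypothesis $b\ge 1$ is needed only to ensure $\bep(b-1,k)$ is defined.

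There is no real obstacle: the entire argument is a one-line substitution plus re-indexing, with the only subtlety being to remember that $\epsilon^2 = 1$ when simplifying $t^2 q^{2k+1}$. The corresponding $k\ge 1$ assumption is needed so that $\bep(b,k-1)$ makes sense.
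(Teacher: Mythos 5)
Your proposal is correct and is exactly the paper's derivation: the author likewise substitutes $t=\epsilon q^{-b}$ into Corollary~\ref{cor:rec_T}, simplifies $t^2q^{2k+1}$ to $q^{2k-2b+1}$ using $\epsilon^2=1$, and rearranges into the $\bep$-notation. No differences worth noting.
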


Now we have recurrence relations and initial conditions for $\aep(b,k)$ and
$\bep(b,k)$. Thus we can use the idea in Section~\ref{sec:self-conj-overp} to
compute $\aep(b,k)$ and $\bep(b,k)$.  As we did in
Section~\ref{sec:self-conj-overp} we define the unit length in the lattice
$\mathbb{Z}\times \mathbb{Z}$ to be $\sqrt2$.

\subsection{Formula for $T_k(\pm q^r,q)$ when $r\geq0$}

Suppose $m$ and $n$ are nonnegative integers with $m=0$ or $n=0$. We define
$L[(b,k)\to(m,n)]$ to be the set of lattice paths from $(b,k)$ to $(m,n)$
consisting of west steps $(-1,0)$ and southwest steps $(-1,-1)$ without any west
steps on the $x$-axis. The condition that there is no west step on the $x$-axis
guarantees that the lattice path ends when it first touches the $x$-axis or the
$y$-axis.

For $p\in L[(b,k)\to(m,n)]$ we define the weight $w(p)$ by
\begin{equation}
  \label{eq:9}
w(p) = q^{A(R)} \prod_{i=m+1}^b \frac1{1-\epsilon q^i} \prod_{i=n+1}^k q^{2i},  
\end{equation}
where $A(R)$ is the area of the upper region $R$ of the rectangle with four
vertices $(0,0)$, $(b,0)$, $(0,k)$, and $(b,k)$ divided by the path $p$.

\begin{lem}\label{lem:alpha_wt}
For $b,k\geq0$, we have
  \[
\aep(b,k) = \sum_{\substack{m,n\geq0\\ mn=0}} \aep(m,n) \sum_{p\in L[(b,k)\to(m,n)]} w(p).
\]
\end{lem}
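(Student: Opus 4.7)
The plan is to iterate the recurrence of Lemma~\ref{lem:aep} and interpret the resulting expansion combinatorially as a sum over lattice paths. Starting from $\aep(b,k)$, whenever we have an entry $\aep(j,i)$ with $j,i\geq 1$, the recurrence rewrites it as
\[
\aep(j,i) = \frac{1}{1-\epsilon q^j}\aep(j-1,i) + \frac{q^{2i+2j-1}}{1-\epsilon q^j}\aep(j-1,i-1),
\]
which corresponds to appending either a west step to $(j-1,i)$ with factor $\frac{1}{1-\epsilon q^j}$ or a southwest step to $(j-1,i-1)$ with factor $\frac{q^{2i+2j-1}}{1-\epsilon q^j}$. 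I would continue applying the recurrence until the arguments first satisfy $mn=0$, i.e., we hit the $x$-axis or $y$-axis; this stopping condition is precisely encoded by the definition of $L[(b,k)\to(m,n)]$, which forbids west steps on the $x$-axis. This unfolds $\aep(b,k)$ as
\[
\aep(b,k)=\sum_{\substack{m,n\ge 0\\ mn=0}}\aep(m,n)\sum_{p\in L[(b,k)\to(m,n)]}W(p),
\]
where $W(p)$ is the product of step weights along $p$.

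It remains to show $W(p)=w(p)$ for every $p$. The factor $\prod_{j=m+1}^b\frac{1}{1-\epsilon q^j}$ is immediate since every step out of column $j$ contributes exactly $\frac{1}{1-\epsilon q^j}$ and each column from $m+1$ to $b$ is traversed once. What remains is to match the $q$-power, i.e., to prove
\[
\sum_{\text{SW at }(j,i)}(2i+2j-1) = A(R)+\sum_{i=n+1}^{k}2i,
\]
which I would establish by induction on the length of $p$. The base case (empty path, $(b,k)=(m,n)$ on an axis) is $0=0$. For the inductive step, a west step from $(b,k)$ to $(b-1,k)$ preserves both sides verbatim (the top strip above the step has area $0$). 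A southwest step from $(b,k)$ to $(b-1,k-1)$ adds $2k+2b-1$ to the left side, and adds $2k$ to the right side through the new factor $q^{2k}$ in $\prod_{i=n+1}^{k}q^{2i}$ together with the increment $A(R_p)-A(R_{p'})=2b-1$.

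The main obstacle, and the only nontrivial computation, is this geometric identification $A(R_p)-A(R_{p'})=2b-1$ for a southwest step. This I would verify directly: the southwest diagonal from $(b,k)$ to $(b-1,k-1)$ cuts off from the strip $[b-1,b]\times[0,k]$ a right triangle above the path with area $1$ in the chosen metric (where the unit triangle has area $1$ and the unit square area $2$); and in the sub-rectangle $[0,b-1]\times[0,k]$, the continuation $p'$ starts at height $k-1$, so the entire horizontal strip $[0,b-1]\times[k-1,k]$ of area $2(b-1)$ lies above $p'$ and is added to $R_{p'}$ to form $R_p$. Together these contribute $1+2(b-1)=2b-1$, completing the induction and hence the proof.
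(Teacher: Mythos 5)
Your proof is correct and follows essentially the same route as the paper's: the paper shows the path-weighted sum satisfies the recurrence of Lemma~\ref{lem:aep} (by the same first-step decomposition and area bookkeeping you carry out, which it relegates to ``a similar argument as in the proof of Lemma~\ref{lem:path}'') and then matches boundary values, which is just the other face of your unfolding argument. Your explicit verification that a southwest step out of $(b,k)$ changes $A(R)$ by $2b-1$ and contributes $q^{2k}$ is exactly the computation the paper leaves implicit.
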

\begin{proof}
  Let $F(b,k)$ denote the right hand side and let $f_{m,n}(b,k)$ denote the
  latter sum there. Using a similar argument as in the proof of
  Lemma~\ref{lem:path}, one can easily check that for $b,k\geq1$,
\[
f_{m,n}(b,k)= \frac{1}{1-\epsilon q^b} f_{m,n}(b-1,k)
+\frac{q^{2k+2b-1}}{1-\epsilon q^b} f_{m,n}(b-1,k-1).
\]  
Thus $F(b,k)$ and $\aep(b,k)$ satisfy the same recurrence relation. Since
$F(b,k)=\aep(b,k)$ when $b=0$ or $k=0$, we get $F(b,k)=\aep(b,k)$ for all
$b,k\geq0$. 
\end{proof}

Since $\aep(m,0)=1$, the formula in the previous lemma can be written as
\begin{equation}
  \label{eq:12}
\aep(b,k) = \sum_{n\geq1} \aep(0,n)\sum_{p\in L[(b,k)\to(0,n)]} w(p) 
+\sum_{m\geq0} \sum_{p\in L[(b,k)\to(m,0)]} w(p). 
\end{equation}

Now we compute the weight sums in \eqref{eq:12}.

\begin{lem}\label{lem:(0,n)}
For $b,k\geq0$ and $n\geq1$, we have
\[
\sum_{p\in L[(b,k)\to(0,n)]} w(p) =  \frac{q^{(k-n)(2k+1)}}{(\epsilon q;q)_b} \Qbinom{b}{k-n}{q^2}. 
\]
\end{lem}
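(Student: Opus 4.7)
The plan is to translate the paths into partitions contained in a box and then apply the standard box identity \eqref{eq:box}. Setting $\ell = k-n$, every $p \in L[(b,k)\to(0,n)]$ consists of exactly $\ell$ southwest steps and $b-\ell$ west steps. Since $n \geq 1$ and each step is non-increasing in $y$, the path stays in $\{y\ge n\}$, so the no-west-step-on-the-$x$-axis condition is automatically vacuous. Shifting by $(0,-n)$, I view $p$ as a path from $(b,\ell)$ to $(0,0)$, and the region $R$ becomes the region above $p$ inside the rectangle $[0,b]\times[0,\ell]$, whose total area is $2b\ell$.

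Next I would use the standard bijection between such paths and partitions $\mu\subset B(\ell,b-\ell)$. Let $a_i$ denote the number of west steps strictly between the $(i-1)$-th and the $i$-th southwest step (with $a_1$ the number before the first SW and $a_{\ell+1}$ the number after the last). Setting $\mu_i := a_1+a_2+\cdots+a_{\ell+1-i}$ produces a weakly decreasing sequence with $\mu_1\le b-\ell$, and this is a bijection onto partitions in $B(\ell,b-\ell)$.

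The crucial computation is the identity $A(R) = 2b\ell-\ell^2-2|\mu|$, which I would obtain by computing the area $A(R')$ below the path column-by-column. A west step from $(x,y)$ to $(x-1,y)$ contributes the rectangle of area $2y$ beneath it, while a southwest step from $(x,y)$ to $(x-1,y-1)$ contributes $2y-1$ (a rectangle of area $2(y-1)$ plus the unit triangle of area $1$). The southwest contributions sum to $\sum_{i=1}^\ell(2(\ell-i+1)-1)=\ell^2$, and the west contributions sum to $2\sum_{i=1}^\ell (\ell-i+1)a_i=2|\mu|$ after swapping the order of summation in $|\mu|=\sum_{i=1}^\ell\mu_i$. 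Subtracting from the rectangle's total area gives the desired $A(R)$.

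Replacing $\mu$ by its complement $\nu=B(\ell,b-\ell)\setminus\mu$ (so $|\nu|=\ell(b-\ell)-|\mu|$) and applying \eqref{eq:box} with $q\mapsto q^2$ yields $\sum_p q^{A(R)}=q^{\ell^2}\Qbinom{b}{\ell}{q^2}$. Since $m=0$, the remaining factors in $w(p)$ are $\prod_{i=n+1}^k q^{2i}=q^{\ell(k+n+1)}$ and $\prod_{i=1}^b(1-\epsilon q^i)^{-1}=(\epsilon q;q)_b^{-1}$, and the identity $\ell^2+\ell(k+n+1)=\ell(2k+1)=(k-n)(2k+1)$ delivers the claimed formula. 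The one technical obstacle is the bookkeeping for the diagonal SW steps and their $-1$ adjustment; computing the area below the path rather than directly above it, and then complementing inside the box, is what makes this clean.
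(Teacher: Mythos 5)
Your proof is correct and follows essentially the same route as the paper: both reduce the path sum to the box identity \eqref{eq:box} in the variable $q^2$, with a triangle of area $(k-n)^2$ split off and the exponent bookkeeping $(k-n)^2+(k-n)(k+n+1)=(k-n)(2k+1)$. The only difference is cosmetic --- the paper identifies $R$ minus that triangle directly with a partition in $B(k-n,\,b-k+n)$, while you compute the area of the complementary region column by column and then complement the resulting partition inside the box.
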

\begin{proof}
  Let $p\in L[(b,k)\to(0,n)]$. From the definition of $w(p)$ in \eqref{eq:9},
  we have
\[
w(p) =  \frac{q^{k(k+1)-n(n+1)}}{(\epsilon q;q)_b} \cdot q^{A(R)}.
\]
Since $p$ consists of west steps and southwest steps, the region contains the
right triangle with three vertices $(0,n)$, $(0,k)$, and $(k-n,k)$, whose area
is $(k-n)^2$, see Figure~\ref{fig:L-path}. Let $S$ be the region obtained from
$R$ by removing this right triangle. Then $S$ is contained in the quadrilateral
with four vertices $(0,n)$, $(k-n,k)$, $(b,k)$, and $(b-k+n,n)$. Again by the
fact that $p$ consists of west steps and southwest steps, one can identify $S$
with a partition $\lambda$ contained in $B(k-n,b-k+n)$. In this identification
we have $A(S) = 2|\lambda|$. Thus, we get
\begin{align*}
\sum_{p\in L[(b,k)\to(0,n)]} w(p)
&=\frac{q^{k(k+1) - n(n+1)}}{(\epsilon q;q)_b}\cdot  q^{(k-n)^2} 
\sum_{\lambda\subset B(k-n,b-k+n)}  q^{2|\lambda|}\\
&=\frac{q^{(k-n)(2k+1)}}{(\epsilon q;q)_b} \Qbinom{b}{k-n}{q^2}. 
\end{align*}
\end{proof}

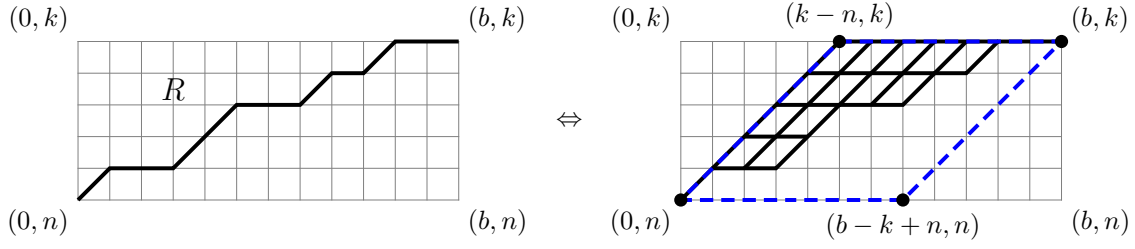
\begin{figure}
  \centering
\begin{pspicture}(-3,-1.5)(15,6.5) 
\psgrid[gridcolor=gray](0,0)(12,5) \psset{linewidth=1.5pt}
\psline(0,0)(1,1)(3,1)(5,3)(7,3)(8,4)(9,4)(10,5)(12,5)
\uput[225](0,0){$(0,n)$}
\uput[135](0,5){$(0,k)$}
\uput[45](12,5){$(b,k)$}
\uput[-45](12,0){$(b,n)$}
\rput(3,3.5){\Large $R$}
\end{pspicture}%
\begin{pspicture}(0,-1.5)(1,6.5) 
\rput(.5, 2.5){$\Leftrightarrow$}
\end{pspicture}%
\begin{pspicture}(-3,-1.5)(15,6.5) 
\psgrid[gridcolor=gray](0,0)(12,5) \psset{linewidth=1.5pt}
\psline(0,0)(1,1)(3,1)(5,3)(7,3)(8,4)(9,4)(10,5)(12,5)
\psline(1,1)(5,5)
\psline(2,1)(6,5)
\psline(5,3)(7,5)
\psline(6,3)(8,5)
\psline(8,4)(9,5)
\psline(5,5)(10,5)
\psline(4,4)(8,4)
\psline(3,3)(5,3)
\psline(2,2)(4,2)
\pspolygon[linestyle=dashed, linecolor=blue](0,0)(5,5)(12,5)(7,0)
\psdot(0,0)
\psdot(5,5)
\psdot(12,5)
\psdot(7,0)
\uput[135](0,5){$(0,k)$}
\uput[225](0,0){$(0,n)$}
\uput[90](5,5){$(k-n,k)$}
\uput[45](12,5){$(b,k)$}
\uput[270](7,0){$(b-k+n,n)$}
\uput[-45](12,0){$(b,n)$}
\end{pspicture}
  \caption{An example of $p\in L[(b,k)\to(0,n)]$.}
  \label{fig:L-path}
\end{figure}

\begin{lem}\label{lem:(m,0)}
For $b\geq0$, $k\geq1$ and $m\geq0$, we have
\[
\sum_{p\in L[(b,k)\to(m,0)]} w(p) =  
\frac{(\epsilon q;q)_m}{(\epsilon q;q)_b} q^{k(2k+2m+1)} \Qbinom{b-m-1}{k-1}{q^2}.
\]
\end{lem}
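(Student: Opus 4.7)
The plan is to mimic the proof of Lemma~\ref{lem:(0,n)}, adapted to paths ending on the $x$-axis. By the definition \eqref{eq:9}, for $p \in L[(b,k) \to (m,0)]$ we have
\[
w(p) = q^{A(R)} \cdot \frac{(\epsilon q;q)_m}{(\epsilon q;q)_b} \cdot q^{k(k+1)},
\]
using $\prod_{i=m+1}^b(1-\epsilon q^i)^{-1} = (\epsilon q;q)_m/(\epsilon q;q)_b$ and $\prod_{i=1}^k q^{2i} = q^{k(k+1)}$. It therefore suffices to show
\[
\sum_{p\in L[(b,k)\to(m,0)]} q^{A(R)} \;=\; q^{k^2+2km}\,\Qbinom{b-m-1}{k-1}{q^2}.
\]

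Because west steps on the $x$-axis are forbidden, the last step of every $p \in L[(b,k) \to (m,0)]$ must be southwest; hence $p$ consists of exactly $k$ southwest steps and $b-m-k$ west steps, and the identity is vacuous if $b<m+k$ (both sides being zero). Consider the extreme path $p_0$ that takes all west steps before any southwest step, tracing $(b,k)\to(m+k,k)\to(m,0)$. Its upper region is the trapezoid $T$ with vertices $(0,0), (0,k), (m+k,k), (m,0)$, whose area in the paper's convention equals $k(2m+k) = k^2+2mk$. For any other $p$, the minimal path $p_0$ stays at the highest possible $y$-value at every $x$, so $R \supseteq T$, and the difference $S(p) := R \setminus T$ lies inside the parallelogram $P$ with vertices $(m+k,k), (b,k), (b-k,0), (m,0)$.

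Next, apply the area-preserving shear $(u,v) = (x-y,y)$, which sends $P$ to the axis-aligned rectangle $[m,b-k]\times[0,k]$ and converts west steps into left steps and southwest steps into down steps. The forced final southwest step becomes a down step from $(m,1)$ to $(m,0)$; peeling it off identifies $p$ with an arbitrary monotone lattice path from $(b-k,k)$ to $(m,1)$ inside the sub-rectangle $[m,b-k]\times[1,k]$, which has dimensions $(b-m-k) \times (k-1)$. By the standard bijection between such paths and Young diagrams in a box, these correspond to partitions $\lambda \subset B(k-1, b-m-k)$, and $S(p)$ is identified with the Ferrers diagram of $\lambda$, so $A(S(p)) = 2|\lambda|$.

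Combining, $A(R) = k^2 + 2mk + 2|\lambda|$, and hence by \eqref{eq:box},
\[
\sum_p q^{A(R)} = q^{k^2+2mk} \sum_{\lambda\subset B(k-1,b-m-k)} q^{2|\lambda|} = q^{k^2+2mk}\,\Qbinom{b-m-1}{k-1}{q^2},
\]
which together with the factorization of $w(p)$ above yields the formula. The main subtlety is geometric: checking that $R \supseteq T$ for every $p$, identifying the correct parallelogram $P$, and properly accounting for the forced last southwest step, which is what produces $k-1$ rather than $k$ in the $q$-binomial coefficient.
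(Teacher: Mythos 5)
Your proof is correct and follows essentially the same route as the paper, which simply notes that the argument of Lemma~\ref{lem:(0,n)} carries over once one observes that the last step of $p$ is forced to be a southwest step through $(m+1,1)$. Your decomposition of $R$ into the trapezoid $T$ plus a (sheared) partition contained in $B(k-1,b-m-k)$ is precisely the adaptation the paper intends, with the forced final step accounting for the $k-1$ in the $q$-binomial coefficient.
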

\begin{proof}
  This is similar to the proof of the previous lemma.  The only difference is
  that since the last step of $p$ is always a southwest step, $p$ visits
  $(m+1,1)$ right before its end point. Then the same argument works, so we omit
  the details.
\end{proof}

Finally, we obtain a formula for $\aep(b,k)$.

\begin{thm}\label{thm:aep}
For $b\geq0$ and $k\geq1$, we have
\[
\aep(b,k) = 
\sum_{i=0}^{k-1} \frac{q^{i(2k+1)}}{(\epsilon q;q)_b} \Qbinom{b}{i}{q^2} \aep(0,k-i)
+\sum_{i=0}^{b-1}\frac{(\epsilon q;q)_i}{(\epsilon q;q)_b} q^{k(2k+2i+1)} 
\Qbinom{b-i-1}{k-1}{q^2}.
\]
\end{thm}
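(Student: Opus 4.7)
The plan is direct: combine the three preceding lemmas and perform a routine change of summation index. Specifically, Lemma~\ref{lem:alpha_wt} together with the fact $\aep(m,0)=1$ already gives the decomposition~\eqref{eq:12}, so it suffices to substitute the closed forms from Lemmas~\ref{lem:(0,n)} and~\ref{lem:(m,0)} into the two parts of~\eqref{eq:12}.

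For the first sum in~\eqref{eq:12}, note that $L[(b,k)\to(0,n)]$ is empty when $n>k$, so the sum over $n\geq 1$ terminates at $n=k$. Applying Lemma~\ref{lem:(0,n)} and substituting $i=k-n$ (so $n$ running from $1$ to $k$ corresponds to $i$ running from $k-1$ down to $0$) yields
\[
\sum_{n=1}^{k} \aep(0,n)\,\frac{q^{(k-n)(2k+1)}}{(\epsilon q;q)_b}\Qbinom{b}{k-n}{q^2}
\;=\;
\sum_{i=0}^{k-1}\frac{q^{i(2k+1)}}{(\epsilon q;q)_b}\Qbinom{b}{i}{q^2}\,\aep(0,k-i),
\]
which matches the first sum in the theorem.

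For the second sum in~\eqref{eq:12}, apply Lemma~\ref{lem:(m,0)} and rename $i=m$:
\[
\sum_{m\geq 0}\frac{(\epsilon q;q)_m}{(\epsilon q;q)_b}\,q^{k(2k+2m+1)}\Qbinom{b-m-1}{k-1}{q^2}.
\]
Since the $q^2$-Gaussian binomial $\Qbinom{b-m-1}{k-1}{q^2}$ vanishes whenever $b-m-1<k-1$, and in particular when $m\geq b$, we may truncate the sum at $m=b-1$ without changing its value, producing the second sum in the theorem.

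The bulk of the work was absorbed into Lemmas~\ref{lem:alpha_wt}, \ref{lem:(0,n)}, and~\ref{lem:(m,0)}, so the only care required here is bookkeeping on the summation ranges: verifying that $n>k$ contributes nothing in the first sum, and that the $q^2$-binomial naturally cuts off the second sum at $m=b-1$. Once these two observations are made, the theorem follows by direct substitution and reindexing, with no further computation needed.
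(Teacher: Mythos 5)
Your proposal is correct and follows essentially the same route as the paper: substitute Lemmas~\ref{lem:(0,n)} and \ref{lem:(m,0)} into \eqref{eq:12}, observe that the summands vanish for $n>k$ and $m\geq b$ respectively, and reindex by $i=k-n$ and $i=m$. The only cosmetic difference is that you justify the first truncation via emptiness of $L[(b,k)\to(0,n)]$ rather than vanishing of the $q^2$-binomial, which is equivalent.
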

\begin{proof}
By \eqref{eq:12} and Lemmas~\ref{lem:(0,n)} and \ref{lem:(m,0)}, we have
\[
\aep(b,k) = 
\sum_{n\geq1} \frac{q^{(k-n)(2k+1)}}{(\epsilon q;q)_b} \Qbinom{b}{k-n}{q^2} \aep(0,n)
+\sum_{m\geq0} \frac{(\epsilon q;q)_m}{(\epsilon q;q)_b} q^{k(2k+2m+1)} 
\Qbinom{b-m-1}{k-1}{q^2}.
\]
In the first sum the summand is zero unless $k-n\geq0$, and in the second sum
the summand is zero unless $m\leq b-1$. Replacing $k-n$ with $i$ in the first sum
and $m$ with $i$ in the second sum we get the desired formula.
\end{proof}

By Theorem~\ref{thm:aep} with $\epsilon=+$ and \eqref{eq:(0,k)+}, we get a
formula for $T_k(q^b,q)$.
\begin{cor}\label{cor:(b,k)+}
For $b\geq0$ and $k\geq1$, we have
\[
T_k(q^b,q) = 
\sum_{i=0}^{k-1} \frac{q^{i(2k+1)}}{(q;q)_b} \Qbinom{b}{i}{q^2} 
\sum_{j=-(k-i)}^{k-i} (-q)^{j^2}
+\sum_{i=0}^{b-1}\frac{(q;q)_i}{(q;q)_b} q^{k(2k+2i+1)} 
\Qbinom{b-i-1}{k-1}{q^2}.
\]
\end{cor}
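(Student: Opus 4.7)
The plan is essentially bookkeeping: Corollary \ref{cor:(b,k)+} is obtained by specializing Theorem \ref{thm:aep} to the sign $\epsilon=+$ and then plugging in the known value of $\alpha_+(0,k)$ from \eqref{eq:(0,k)+}. So no new combinatorics is needed — all of the work has been done upstream (the lattice-path interpretation of Lemma \ref{lem:alpha_wt}, the area evaluations in Lemmas \ref{lem:(0,n)} and \ref{lem:(m,0)}, and the base-case identity $T_k(1,q)=\sum_{i=-k}^{k}(-q)^{i^2}$ coming from the recurrence \eqref{eq:rec_Tk}).

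Concretely, I would start by recalling that $\alpha_+(b,k)=T_k(q^b,q)$ by definition, so that Theorem \ref{thm:aep} with $\epsilon=+$ reads
\[
T_k(q^b,q) =
\sum_{i=0}^{k-1} \frac{q^{i(2k+1)}}{(q;q)_b} \Qbinom{b}{i}{q^2} \alpha_+(0,k-i)
+\sum_{i=0}^{b-1}\frac{(q;q)_i}{(q;q)_b} q^{k(2k+2i+1)}
\Qbinom{b-i-1}{k-1}{q^2},
\]
since $(\epsilon q;q)_b=(q;q)_b$ when $\epsilon=+$. The second sum already matches the target formula verbatim, so the only remaining step is to replace the factor $\alpha_+(0,k-i)$ in the first sum.

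For this I would invoke \eqref{eq:(0,k)+}, which gives $\alpha_+(0,m)=T_m(1,q)=\sum_{j=-m}^{m}(-q)^{j^2}$ for every $m\ge 0$; taking $m=k-i$ (note $k-i\ge 1$ throughout the range $0\le i\le k-1$, so there is no edge case to worry about) turns the first sum into
\[
\sum_{i=0}^{k-1} \frac{q^{i(2k+1)}}{(q;q)_b} \Qbinom{b}{i}{q^2}
\sum_{j=-(k-i)}^{k-i} (-q)^{j^2},
\]
which is exactly the first sum in the claimed formula. Combining the two sums completes the proof.

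Since there are no delicate convergence, sign, or cancellation issues to navigate, the only potential source of error is notational consistency — in particular, matching the index shift in Theorem \ref{thm:aep} (where $i$ plays the role of $k-n$) with the index $j$ of summation in \eqref{eq:(0,k)+}. So I would present the derivation as a two-line substitution and explicitly record the identification $i \leftrightarrow k-n$ used in the first sum to make the correspondence transparent.
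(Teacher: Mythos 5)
Your proposal is correct and coincides exactly with the paper's argument: the paper derives Corollary~\ref{cor:(b,k)+} precisely by setting $\epsilon=+$ in Theorem~\ref{thm:aep} and substituting $\alpha_+(0,k-i)=T_{k-i}(1,q)=\sum_{j=-(k-i)}^{k-i}(-q)^{j^2}$ from \eqref{eq:(0,k)+}. Your added remarks about $(\epsilon q;q)_b=(q;q)_b$ and the absence of edge cases for $0\le i\le k-1$ are accurate.
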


If $b=1$ in Corollary~\ref{cor:(b,k)+}, we have that for $k\geq1$,
\[
T_k(q,q) = 
\frac{1}{1-q} \sum_{i=-k}^{k} (-q)^{i^2}
+ \frac{q^{2k+1}}{1-q} \sum_{i=-(k-1)}^{k-1} (-q)^{i^2},
\]
which together with Theorem~\ref{thm:main2} implies \eqref{eq:tan}.

By Theorem~\ref{thm:aep} with $\epsilon=-$ and \eqref{eq:(0,k)-}, we get a
formula for $T_k(-q^b,q)$.

\begin{cor}\label{cor:(b,k)-}
For $b\geq0$ and $k\geq1$, we have
\[
T_k(-q^b,q) = 
\sum_{i=0}^{k-1} \frac{q^{i(2k+1)}}{(-q;q)_b} \Qbinom{b}{i}{q^2} 
+\sum_{i=0}^{b-1}\frac{(-q;q)_i}{(-q;q)_b} q^{k(2k+2i+1)} 
\Qbinom{b-i-1}{k-1}{q^2}.
\]
\end{cor}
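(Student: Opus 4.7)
The plan is to derive Corollary~\ref{cor:(b,k)-} as a direct specialization of Theorem~\ref{thm:aep}. The setup in Section~\ref{sec:another_formula} has already isolated all the real work: Theorem~\ref{thm:aep} provides a closed form for $\aep(b,k)$ that is uniform in the sign $\epsilon$, and Corollary~\ref{cor:(b,k)-} is just the $\epsilon = -$ case. So the proof amounts to bookkeeping, not genuine new input.

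Concretely, I would first unfold the definitions: $\alpha_-(b,k) = T_k(-q^b,q)$, so the left-hand side of Theorem~\ref{thm:aep} at $\epsilon=-$ is exactly $T_k(-q^b,q)$. Next, I would substitute the initial data from equation \eqref{eq:(0,k)-}, namely $\alpha_-(0,n) = T_n(-1,q) = 1$ for every $n \geq 0$. Plugging $\alpha_-(0, k-i) = 1$ into the first sum of Theorem~\ref{thm:aep} collapses that factor and leaves
\[
\sum_{i=0}^{k-1} \frac{q^{i(2k+1)}}{(-q;q)_b} \Qbinom{b}{i}{q^2}.
\]
The second sum of Theorem~\ref{thm:aep} becomes
\[
\sum_{i=0}^{b-1}\frac{(-q;q)_i}{(-q;q)_b} q^{k(2k+2i+1)} \Qbinom{b-i-1}{k-1}{q^2}
\]
simply by replacing $(\epsilon q;q)_\bullet$ with $(-q;q)_\bullet$. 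Adding the two sums yields precisely the formula claimed. This parallels the derivation of Corollary~\ref{cor:(b,k)+} from Theorem~\ref{thm:aep} with $\epsilon = +$, except that the sign-choice makes the first sum much simpler because $T_n(-1,q) = 1$ is a constant rather than $\sum_{j=-n}^{n}(-q)^{j^2}$.

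There is essentially no obstacle here: all recurrences, initial values, and lattice-path enumerations have already been established (Lemmas~\ref{lem:aep}, \ref{lem:alpha_wt}, \ref{lem:(0,n)}, \ref{lem:(m,0)} and Theorem~\ref{thm:aep}), and the only delicate point is verifying that $\alpha_-(0,k)=1$, which is immediate from the recurrence \eqref{eq:rec_Tk} evaluated at $t=-1$ (the bracketed coefficient $(1+t)$ vanishes and $(1-t^2)$ vanishes, so the recurrence reduces to $T_k(-1,q) = T_{k-1}(-1,q)$ with $T_0(-1,q)=1$). Thus the proof is a single sentence: specialize Theorem~\ref{thm:aep} to $\epsilon=-$ and apply \eqref{eq:(0,k)-}.
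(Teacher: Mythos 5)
Your proposal is correct and is exactly the paper's own argument: the paper obtains Corollary~\ref{cor:(b,k)-} by specializing Theorem~\ref{thm:aep} to $\epsilon=-$ and inserting the initial values $\alpha_-(0,n)=T_n(-1,q)=1$ from \eqref{eq:(0,k)-}. Your verification that $T_n(-1,q)=1$ via the vanishing of $(1+t)$ and $(1-t^2)$ in \eqref{eq:rec_Tk} matches the paper's remark as well, so there is nothing to add.
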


If $b=1$ in Corollary~\ref{cor:(b,k)-}, we have that for $k\geq1$,
\begin{equation}
  \label{eq:10}
T_k(-q,q) = \frac{1+q^{2k+1}}{1+q}.  
\end{equation}
Note that the above identity is also true for $k=0$.  This gives the following
formula for $E_n(-q,q)$.

\begin{prop}\label{thm:-q}
We have
 \[ E_n(-q,q) = \frac{1}{(1-q)^{2n}} \sum_{k=0}^n \left( \binom{2n}{n-k} -
    \binom{2n}{n-k-1} \right) (-1)^k\frac{q^{k^2}+q^{(k+1)^2}}{1+q}.
\]
\end{prop}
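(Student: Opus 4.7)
The plan is to deduce the formula by substituting $t=-q$ into Theorem~\ref{thm:main2} and invoking the closed form~\eqref{eq:10} for $T_k(-q,q)$, specialized at $q^{-1}$. Theorem~\ref{thm:main2} gives
\[
E_n(-q,q) = \frac{1}{(1-q)^{2n}} \sum_{k=0}^n \left( \binom{2n}{n-k} - \binom{2n}{n-k-1} \right) (-q)^k q^{k(k+1)} T_k(-q^{-1}, q^{-1}),
\]
so the only nontrivial task is to evaluate $T_k(-q^{-1},q^{-1})$ and absorb the prefactor $(-q)^k q^{k(k+1)}$.

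First I would note that the identity $T_k(-q,q) = (1+q^{2k+1})/(1+q)$ in~\eqref{eq:10} holds as a rational function in $q$, so it is valid to substitute $q \mapsto q^{-1}$. This yields
\[
T_k(-q^{-1}, q^{-1}) = \frac{1+q^{-(2k+1)}}{1+q^{-1}}.
\]
Then I would multiply numerator and denominator by $q^{2k+1}$ to clear negative exponents, obtaining
\[
T_k(-q^{-1}, q^{-1}) = \frac{q^{2k+1}+1}{q^{2k+1}+q^{2k}} = \frac{1+q^{2k+1}}{q^{2k}(1+q)}.
\]

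Finally, I would compute the prefactor times this expression:
\[
(-q)^k q^{k(k+1)} \cdot \frac{1+q^{2k+1}}{q^{2k}(1+q)} = (-1)^k q^{k+k(k+1)-2k} \cdot \frac{1+q^{2k+1}}{1+q} = (-1)^k \, \frac{q^{k^2}+q^{(k+1)^2}}{1+q},
\]
since $k^2+2k+1 = (k+1)^2$. Substituting this back into the displayed expression for $E_n(-q,q)$ produces exactly the formula in the proposition. There is no real obstacle here; the proof is a routine verification, with the only mild care needed being the algebraic simplification in step two so that the powers of $q$ coming from $(-q)^k q^{k(k+1)}$ and the $q^{-2k}$ in the denominator combine cleanly into $q^{k^2}$.
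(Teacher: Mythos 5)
Your proof is correct and takes essentially the same route as the paper: substitute $t=-q$ into Theorem~\ref{thm:main2}, evaluate $T_k(-q^{-1},q^{-1})$ via the closed form \eqref{eq:10}, and absorb the prefactor $(-q)^k q^{k(k+1)}$ into $(-1)^k(q^{k^2}+q^{(k+1)^2})/(1+q)$. The only small point to make explicit is that \eqref{eq:10} comes from Corollary~\ref{cor:(b,k)-}, which is stated for $k\geq 1$, so the case $k=0$ needs the separate (trivial) check $T_0=1=(1+q)/(1+q)$, as the paper remarks.
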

\begin{proof}
By Theorem~\ref{thm:main2}, 
 \[
 E_n(-q,q) = \frac{1}{(1-q)^{2n}} \sum_{k=0}^n \left( \binom{2n}{n-k} -
  \binom{2n}{n-k-1} \right) (-q)^kq^{k(k+1)} T_k(-q^{-1},q^{-1}).
\]
By \eqref{eq:10} we get
\[
(-q)^kq^{k(k+1)} T_k(-q^{-1},q^{-1})  
= (-q)^{k^2+2k} \frac{1+q^{-2k-1}}{1+q^{-1}}
= (-1)^k\frac{q^{k^2}+q^{(k+1)^2}}{1+q},
\]
which finishes the proof.
\end{proof}

\subsection{Formula for $T_k(\pm q^r,q)$ when $r\leq0$}

Suppose $m$ and $n$ are nonnegative integers with $m=0$ or $n=0$. We define
$L'[(b,k)\to(m,n)]$ to be the set of lattice paths from $(b,k)$ to $(m,n)$
consisting of west steps $(-1,0)$ and south steps $(0,-1)$ without west steps on
the $x$-axis nor south steps on the $y$-axis.  For $p\in L'[(b,k)\to(m,n)]$ we
define the weight $w(p)$ by
\begin{equation}
  \label{eq:9'}
w(p) = q^{-A(R)} \prod_{i=m+1}^{b}(1-\epsilon q^{1-i})
 \prod_{i=n+1}^k(-q^{2i+1}),
\end{equation}
where $A(R)$ is the area of the upper region $R$ of the rectangle with four
vertices $(0,0)$, $(b,0)$, $(0,k)$, and $(b,k)$ divided by the path $p$.

\begin{lem}
For $b,k\geq0$, we have
  \[
\bep(b,k) = \sum_{\substack{m,n\geq0\\ mn=0}} \bep(m,n) \sum_{p\in L'[(b,k)\to(m,n)]} w(p).
\]
\end{lem}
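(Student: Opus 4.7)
The plan is to mimic the proof of Lemma~\ref{lem:alpha_wt} almost verbatim: show that both sides satisfy the same recurrence (namely Lemma~\ref{lem:bep}) and agree on the boundary of the quadrant $\{(b,k) : b,k \geq 0\}$, then conclude by induction on $b+k$. Let me denote the right-hand side by $G(b,k)$ and write $g_{m,n}(b,k) = \sum_{p \in L'[(b,k)\to(m,n)]} w(p)$ for the inner sums, so that $G(b,k) = \sum_{m,n \geq 0,\, mn=0} \bep(m,n)\, g_{m,n}(b,k)$.

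The main step is to prove, for all $b,k \geq 1$ and admissible $(m,n)$, the identity
\[
g_{m,n}(b,k) = (1-\epsilon q^{1-b})\, g_{m,n}(b-1,k) - q^{2k-2b+1}\, g_{m,n}(b,k-1),
\]
by splitting a path $p \in L'[(b,k) \to (m,n)]$ according to its first step. If the first step is west to $(b-1,k)$, then the remaining path $p'$ lies in $L'[(b-1,k)\to(m,n)]$; this step runs along the line $y=k$ and so the upper region is unchanged, while the product $\prod_{i=m+1}^{b}(1-\epsilon q^{1-i})$ picks up precisely the factor $(1-\epsilon q^{1-b})$ over that of $p'$. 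If the first step is south to $(b,k-1)$, then $p' \in L'[(b,k-1)\to(m,n)]$, the rectangle grows by the strip $[0,b]\times[k-1,k]$ of area $2b$ (using the convention that unit length is $\sqrt{2}$), and the $y$-product contributes the factor $-q^{2k+1}$; multiplying gives $w(p) = -q^{2k-2b+1}\, w(p')$. Summing the two contributions yields the claimed recurrence, which by linearity is inherited by $G(b,k)$.

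For the boundary values, observe that from $(0,k)$ neither a west step (to negative abscissa) nor a south step (forbidden on the $y$-axis) is available, so the only element of $L'[(0,k)\to(m,n)]$ is the empty path with $(m,n)=(0,k)$ and weight $1$; hence $G(0,k) = \bep(0,k)$. A symmetric argument at $k=0$ gives $G(b,0) = \bep(b,0)$. Induction on $b+k$ using the common recurrence then establishes $G(b,k) = \bep(b,k)$ for all $b,k \geq 0$.

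The only subtlety I anticipate is correctly tracking the change in the upper region $R$ under the south step: unlike the southwest steps used for $\aep$, a south step is parallel to the rectangle's right edge, so the new rectangle gains a full horizontal strip of area $2b$ rather than a triangular sliver. Once that accounting is in place, the argument is entirely parallel to the proof of Lemma~\ref{lem:alpha_wt}, so I do not expect any real difficulty.
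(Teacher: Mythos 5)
Your proposal is correct and follows exactly the route the paper intends: the paper omits this proof, deferring to the argument for Lemma~\ref{lem:alpha_wt}, which is precisely the recurrence-plus-boundary-values induction you carry out. Your accounting of the south step (the full strip of area $2b$ joining the upper region, giving the factor $-q^{2k-2b+1}$ matching Lemma~\ref{lem:bep}) is the right way to fill in the one detail that differs from the $\aep$ case.
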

\begin{proof}
  Since this can be done similarly as in the proof of Lemma~\ref{lem:alpha_wt},
  we omit the proof.
\end{proof}

Notice that $L'[(b,k)\to(0,0)]=\emptyset$ unless $(b,k)=(0,0)$.  Since
$\bep(m,0)=1$, the formula in the previous lemma can be written as follows: if
$(b,k)\ne (0,0)$, we have
\begin{equation}
  \label{eq:12'}
\bep(b,k) = \sum_{n\geq1} \bep(0,n)\sum_{p\in L'[(b,k)\to(0,n)]} w(p) 
+\sum_{m\geq1} \sum_{p\in L'[(b,k)\to(m,0)]} w(p). 
\end{equation}

\begin{lem}\label{lem:(0,n)'}
For $b,k\geq0$ and $n\geq1$ with $(b,k)\ne(0,0)$, we have
\[
\sum_{p\in L'[(b,k)\to(0,n)]} w(p) =  
(\epsilon q^{1-b};q)_b (-q)^{(k-n)(k+n-2b+2)}
\Qbinom{b+k-n-1}{k-n}{q^2}.  
\]
\end{lem}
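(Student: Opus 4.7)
The plan is to mirror the proof of Lemma~\ref{lem:(0,n)}: encode the paths by partitions, pull out the path-independent factors of $w(p)$, and evaluate what remains by a $q$-binomial identity.

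First I would observe that since $n\geq1$ and south steps on the $y$-axis are forbidden, every $p\in L'[(b,k)\to(0,n)]$ must end with a west step from $(1,n)$ to $(0,n)$. Removing that last step leaves a path from $(b,k)$ to $(1,n)$ consisting of $b-1$ west steps and $k-n$ south steps in any order. Parameterizing such a path by the $x$-coordinates $b\geq x_1\geq x_2\geq\cdots\geq x_{k-n}\geq 1$ at which the successive south steps occur, and setting $\lambda_j=x_j-1$, gives a bijection between $L'[(b,k)\to(0,n)]$ and partitions $\lambda\subset B(k-n,b-1)$.

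Next I would compute $A(R)$ for the path corresponding to $\lambda$. Since $p$ is monotone, the rectangle $[0,b]\times[0,k]$ splits along $y=n$, and $R$ is contained in $[0,b]\times[n,k]$. Reading row by row from the top, $R$ is a Young diagram whose $j$-th row has exactly $x_j$ cells, so in the scaled units $A(R)=2\sum_j x_j=2(k-n)+2|\lambda|$. The factors in \eqref{eq:9'} that do not depend on $p$ pull out of the sum: $\prod_{i=1}^b(1-\epsilon q^{1-i})=(\epsilon q^{1-b};q)_b$ and $\prod_{i=n+1}^k(-q^{2i+1})=(-1)^{k-n}q^{(k-n)(k+n+2)}$. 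What remains is
\[
q^{-2(k-n)}\sum_{\lambda\subset B(k-n,b-1)}q^{-2|\lambda|}
=q^{-2(k-n)}\Qbinom{b+k-n-1}{k-n}{q^{-2}},
\]
using \eqref{eq:box}. Applying the standard identity $\qbinom{a}{c}_{q^{-1}}=q^{-c(a-c)}\qbinom{a}{c}_q$ converts this into $q^{-2b(k-n)}\Qbinom{b+k-n-1}{k-n}{q^2}$.

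Multiplying all factors together gives $(\epsilon q^{1-b};q)_b(-1)^{k-n}q^{(k-n)(k+n-2b+2)}\Qbinom{b+k-n-1}{k-n}{q^2}$, and a short parity check---$(k-n)(k+n+1)$ is always even, since consecutive integers have opposite parity---shows that $(-1)^{k-n}=(-1)^{(k-n)(k+n-2b+2)}$, matching the claimed formula. The main obstacle is the area bookkeeping: one must carefully interpret $R$ in the scaled lattice (where unit cells have area $2$), handle the forced final west step without double-counting, and perform the $q^{-2}\leftrightarrow q^2$ conversion in the $q$-binomial so that the exponent $(k-n)(k+n-2b+2)$ comes out exactly right. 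Degenerate cases such as $b=0$ (forcing $k=n$) or $k=n$ with $b\geq 1$ are automatically handled by the convention $\Qbinom{-1}{0}{q^2}=1$ and the vanishing of $q$-binomials with negative upper argument below the lower.
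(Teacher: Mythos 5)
Your proof is correct and follows essentially the same route as the paper: encode each path by a partition in $B(k-n,b-1)$, pull out the path-independent factors, and evaluate the remaining sum via \eqref{eq:box}. The only (immaterial) difference is that you compute the area of the upper region $R$ directly and then invert $q^2\mapsto q^{-2}$ in the Gaussian binomial, whereas the paper works with the complementary lower region $R'$ and sums $q^{2|\lambda|}$ directly; these are related by complementation in the box and yield identical bookkeeping.
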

\begin{proof}
  Let $p\in L'[(b,k)\to(0,n)]$. From the definition of $w(p)$ in \eqref{eq:9'},
  we have
\[
w(p) =  q^{-A(R)} (\epsilon q^{1-b}; q)_b (-1)^{k-n} q^{(k+1)^2-(n+1)^2}.
\]
Note that $R$ is contained in the rectangle with four vertices $(0,n)$, $(0,k)$,
$(b,n)$, and $(b,k)$, see Figure~\ref{fig:L'-path}. Let $R'$ be the region of
this rectangle minus $R$. Then $-A(R) = - 2b(k-n) + A(R')$. Since the last step
of $p$ is a west step, $R'$ can be identified with a partition $\lambda\subset
B(k-n,b-1)$, which is rotated by an angle of $180^\circ$, and $A(R')=
2|\lambda|$. Therefore,
\begin{align*}
\sum_{p\in L'[(b,k)\to(0,n)]} w(p) 
&=  (\epsilon q^{1-b}; q)_b (-1)^{k-n} q^{(k-n)(k+n+2) - 2b(k-n)}
\sum_{\lambda\subset B(k-n,b-1)} q^{2|\lambda|}\\
&= (\epsilon q^{1-b};q)_b (-q)^{(k-n)(k+n-2b+2)} \Qbinom{b+k-n-1}{k-n}{q^2}.
\end{align*}
\end{proof}

\begin{figure}
  \centering
\begin{pspicture}(-3,-1.5)(15,6.5) 
\psgrid[gridcolor=gray](0,0)(12,5) \psset{linewidth=1.5pt}
\psline(0,0)(3,0)(3,1)(5,1)(5,3)(8,3)(8,4)(12,4)(12,5)
\uput[225](0,0){$(0,n)$}
\uput[135](0,5){$(0,k)$}
\uput[45](12,5){$(b,k)$}
\uput[-45](12,0){$(b,n)$}
\rput(3,3.5){\Large $R$}
\rput(9,1.5){\Large $R'$}
\end{pspicture}%
\begin{pspicture}(0,-1.5)(1,6.5) 
\rput(.5, 2.5){$\Leftrightarrow$}
\end{pspicture}%
\begin{pspicture}(-3,-1.5)(15,6.5) 
\psgrid[gridcolor=gray](0,0)(12,5) \psset{linewidth=1.5pt}
\psline(3,0)(3,1)(5,1)(5,3)(8,3)(8,4)(12,4)
\pspolygon[linestyle=dashed, linecolor=blue](1,0)(1,5)(12,5)(12,0)
\psline(3,0)(12,0)
\psline(5,1)(12,1)
\psline(5,2)(12,2)
\psline(8,3)(12,3)
\psline(4,0)(4,1)
\psline(5,0)(5,1)
\psline(6,0)(6,3)
\psline(7,0)(7,3)
\psline(8,0)(8,3)
\psline(9,0)(9,4)
\psline(10,0)(10,4)
\psline(11,0)(11,4)
\psline(12,0)(12,4)
\psdot(1,0)
\psdot(1,5)
\psdot(12,5)
\psdot(12,0)
\uput[135](1,5){$(1,k)$}
\uput[225](1,0){$(1,n)$}
\uput[45](12,5){$(b,k)$}
\uput[-45](12,0){$(b,n)$}
\end{pspicture}
  \caption{An example of $p\in L'[(b,k)\to(0,n)]$. The lower region $R'$ can be
    identified with a rotated partition contained in $B(k-n,b-1)$.}
  \label{fig:L'-path}
\end{figure}
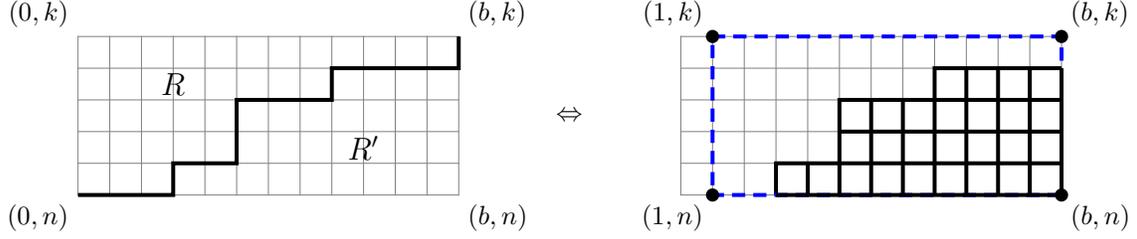

\begin{lem}\label{lem:(m,0)'}
  For $b,k\geq0$ and $m\geq1$ with $(b,k)\ne(0,0)$, we have
\[
\sum_{p\in L'[(b,k)\to(m,0)]} w(p) =  
(\epsilon q^{1-b};q)_{b-m} (-q)^{k(k-2b+2)+2(b-m)} \Qbinom{k+b-m-1}{b-m}{q^2}.
\]
\end{lem}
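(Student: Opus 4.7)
The plan is to adapt the template of Lemma \ref{lem:(0,n)'}, with the key preliminary observation that since $m\geq 1$ and west steps on the $x$-axis are forbidden, every $p\in L'[(b,k)\to(m,0)]$ must terminate with the south step $(m,1)\to(m,0)$. The two product factors in \eqref{eq:9'} simplify as $\prod_{i=m+1}^b(1-\epsilon q^{1-i})=(\epsilon q^{1-b};q)_{b-m}$ and $\prod_{i=1}^k(-q^{2i+1})=(-1)^kq^{k(k+2)}$, so all the work is in evaluating $q^{-A(R)}$.

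The bookkeeping for $R$ differs from the previous lemma because now the path endpoint lies on the bottom edge rather than the left edge. The region $R$ is bounded by the path, the bottom segment from $(m,0)$ to $(0,0)$, the $y$-axis, and the top edge, so it contains the entire left strip $[0,m]\times[0,k]$. Letting $R'$ be the complementary region in the full rectangle $[0,b]\times[0,k]$, we obtain $A(R)+A(R')=2bk$, hence $q^{-A(R)}=q^{A(R')-2bk}$. The region $R'$ lies in the strip $[m,b]\times[0,k]$, and because the path's last step is vertical at $x=m$, the entire bottom row $[m,b]\times[0,1]$ is contained in $R'$, contributing area $2(b-m)$. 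The remainder of $R'$ is the staircase determined by the path from $(b,k)$ to $(m,1)$, and via the standard identification corresponds to a partition $\lambda'\subset B(k-1,b-m)$ with $A(R')=2(b-m)+2|\lambda'|$. Summing $q^{2|\lambda'|}$ over such $\lambda'$ produces $\Qbinom{k+b-m-1}{b-m}{q^2}$ by \eqref{eq:box}.

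Assembling the pieces, the exponent of $q$ becomes $k(k+2)+2(b-m)-2bk+2|\lambda'| = k(k-2b+2)+2(b-m)+2|\lambda'|$. A brief parity check, namely that $k(k-2b+2)$ has the same parity as $k$ and that $2(b-m)$ is even, lets us absorb $(-1)^k$ to obtain $(-q)^{k(k-2b+2)+2(b-m)}$, and the lemma follows. The step I expect to be most delicate is the area accounting: one has to note that $R$ picks up the whole left strip $[0,m]\times[0,k]$, which is why the complementary relation here is $A(R)+A(R')=2bk$ rather than the $2b(k-n)$ appearing in Lemma \ref{lem:(0,n)'}, and that the bottom row of $R'$ contributes a separate summand $2(b-m)$ beyond the partition area $2|\lambda'|$.
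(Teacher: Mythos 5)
Your proposal is correct and follows exactly the route the paper intends: the paper's own proof of this lemma is just the remark that it follows ``by the same argument as in the proof of the previous lemma,'' and what you have written is precisely that adaptation, with the forced final south step $(m,1)\to(m,0)$, the complementary-area relation $A(R)+A(R')=2bk$, the extra bottom strip of area $2(b-m)$, and the identification of the rest of $R'$ with a partition in $B(k-1,b-m)$ giving $\Qbinom{k+b-m-1}{b-m}{q^2}$. All the bookkeeping (the two product factors, the exponent $k(k-2b+2)+2(b-m)$, and the parity argument absorbing $(-1)^k$ into $(-q)$) checks out.
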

\begin{proof}
This can be done by the same argument as in the proof of the previous lemma.
\end{proof}

Now we can find a formula for $\bep(b,k)$. 

\begin{thm}
  For $b,k\geq0$ with $(b,k)\ne(0,0)$, we have
\begin{multline*}
\bep(b,k) = 
\sum_{i=0}^{k-1} (\epsilon q^{1-b};q)_b (-q)^{i(2k-2b-i+2)}
\Qbinom{b+i-1}{i}{q^2} \bep(0,k-i)\\
+ \sum_{i=0}^{b-1} 
(\epsilon q^{1-b};q)_i (-q)^{k(k-2b+2)+2i} \Qbinom{k+i-1}{i}{q^2}.
\end{multline*}
\end{thm}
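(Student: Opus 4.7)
The plan is to substitute the closed-form weight sums from Lemmas~\ref{lem:(0,n)'} and \ref{lem:(m,0)'} directly into the decomposition~\eqref{eq:12'}, and then perform two index shifts to put the result in the stated form. Since~\eqref{eq:12'} already separates $\bep(b,k)$ into contributions from paths ending on the $y$-axis and paths ending on the $x$-axis, the two terms of the theorem will correspond to these two contributions.

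For the first sum of \eqref{eq:12'}, I would plug in Lemma~\ref{lem:(0,n)'} to get
\[
\sum_{n\geq 1} (\epsilon q^{1-b};q)_b (-q)^{(k-n)(k+n-2b+2)} \Qbinom{b+k-n-1}{k-n}{q^2} \bep(0,n).
\]
The Gaussian binomial $\Qbinom{b+k-n-1}{k-n}{q^2}$ vanishes when $k-n<0$, so the effective range is $1\leq n\leq k$. Setting $i=k-n$, the summand becomes $(\epsilon q^{1-b};q)_b (-q)^{i(2k-2b-i+2)} \Qbinom{b+i-1}{i}{q^2} \bep(0,k-i)$, and $i$ runs over $0,1,\dots,k-1$. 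This matches the first term of the theorem exactly.

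For the second sum, I would plug in Lemma~\ref{lem:(m,0)'}. Here $\Qbinom{k+b-m-1}{b-m}{q^2}$ vanishes when $b-m<0$ and is $1$ when $b=m$ (if $k\geq 1$); also $(\epsilon q^{1-b};q)_{b-m}$ truncates as expected, so the nontrivial range is $1\leq m\leq b$. Setting $i=b-m$, the summand becomes $(\epsilon q^{1-b};q)_i (-q)^{k(k-2b+2)+2i} \Qbinom{k+i-1}{i}{q^2}$, and $i$ runs over $0,1,\dots,b-1$. This matches the second term of the theorem.

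Combining the two reindexed sums yields the claimed formula. There is no genuine obstacle: the proof is a direct substitution followed by the two reindexations $n\mapsto k-i$ and $m\mapsto b-i$. The only points requiring a little care are verifying that the two sums in Lemmas~\ref{lem:(0,n)'} and \ref{lem:(m,0)'} truncate correctly through the vanishing of the Gaussian binomials, and checking the boundary cases where $b=0$ or $k=0$ (in which one of the path sets is empty and the corresponding sum is empty, in agreement with the stated formula).
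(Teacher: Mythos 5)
Your proposal is correct and follows essentially the same route as the paper: substitute Lemmas~\ref{lem:(0,n)'} and \ref{lem:(m,0)'} into the decomposition \eqref{eq:12'}, observe that the summands vanish unless $k-n\geq0$ and $b-m\geq0$ respectively, and reindex by $i=k-n$ and $i=b-m$. No issues.
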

\begin{proof}
By \eqref{eq:12'} and Lemmas~\ref{lem:(0,n)'} and \ref{lem:(m,0)'},
we have
\begin{multline*}
\bep(b,k) = \sum_{n\geq1} 
(\epsilon q^{1-b};q)_b (-q)^{(k-n)(k+n-2b+2)}
\Qbinom{b+k-n-1}{k-n}{q^2} \bep(0,n) \\
+\sum_{m\geq1} (\epsilon q^{1-b};q)_{b-m} 
(-q)^{k(k-2b+2)+2(b-m)} \Qbinom{k+b-m-1}{b-m}{q^2}. 
\end{multline*}
In the first sum the summand is zero unless $k-n\geq0$, and in the second sum
the summand is zero unless $b-m\geq0$. By replacing $k-n$ with $i$ in the first
sum and $b-m$ with $i$ in the second sum, we get the desired formula.
\end{proof}

By Theorem~\ref{thm:main2} with $\epsilon=+$ and \eqref{eq:(0,k)+}, we get a
formula for $T_k(q^{-b},q)$. 

\begin{cor}\label{cor:(-b,k)+}
For $b\geq1$ and $k\geq0$, we have
\[
T_k(q^{-b},q) = \sum_{i=0}^{b-1} (q^{1-b};q)_i (-q)^{k(k-2b+2)+2i} 
\Qbinom{k+i-1}{i}{q^2}.
\]
\end{cor}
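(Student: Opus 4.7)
The plan is to derive this corollary as a direct specialization of the preceding theorem. By definition $T_k(q^{-b},q) = \beta_+(b,k)$, so setting $\epsilon = +$ in the formula
\[
\bep(b,k) =
\sum_{i=0}^{k-1} (\epsilon q^{1-b};q)_b (-q)^{i(2k-2b-i+2)}
\Qbinom{b+i-1}{i}{q^2} \bep(0,k-i)
+ \sum_{i=0}^{b-1}
(\epsilon q^{1-b};q)_i (-q)^{k(k-2b+2)+2i} \Qbinom{k+i-1}{i}{q^2}
\]
already reproduces the right-hand side of the claimed formula in its second summand, provided the first summand vanishes.

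The key observation is that the first sum is identically zero for $b \geq 1$. Indeed,
\[
(q^{1-b};q)_b = \prod_{j=0}^{b-1}(1 - q^{1-b+j}),
\]
and the factor corresponding to $j = b-1$ is $1 - q^0 = 0$. Hence the coefficient $(q^{1-b};q)_b$ multiplying the entire first sum kills it. This is the only nontrivial step; the rest is bookkeeping.

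Therefore my proof will consist of two short lines: specialize the theorem to $\epsilon = +$, note the vanishing of $(q^{1-b};q)_b$, and identify the surviving sum (with $(\epsilon q^{1-b};q)_i = (q^{1-b};q)_i$) with the asserted expression for $T_k(q^{-b},q)$. There is no real obstacle here since all combinatorial work has been carried out in the proof of the preceding theorem; the only thing to verify is the sign convention and the $q$-Pochhammer evaluation, which is immediate.
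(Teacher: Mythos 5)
Your proposal is correct and matches the paper's own (one-line) derivation: specialize the preceding theorem for $\bep(b,k)$ to $\epsilon=+$, observe that $(q^{1-b};q)_b=0$ for $b\geq1$ because of the factor $1-q^0$, so the first sum vanishes, and the surviving second sum is exactly the claimed formula. The verification of the vanishing Pochhammer symbol is the only content needed, and you have it right.
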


If $b=1$ in Corollary~\ref{cor:(-b,k)+}, we get
\[
T_k(1/q,k) = (-q)^{k^2},
\]
which together with Theorem~\ref{thm:main2} implies
\[ E_n(1/q,q) = \frac{1}{(1-q)^{2n}}
\sum_{k=0}^n \left( \binom{2n}{n-k} - \binom{2n}{n-k-1} \right) (-1)^k,
\]
which is equal to $1$ if $n=0$, and $0$ otherwise.  Notice that this corresponds
to the trivial identity:
\[
 \sum_{n\geq0} E_{n}(1/q,q) x^n = 1. 
\]

By Theorem~\ref{thm:main2} with $\epsilon=-$ and \eqref{eq:(0,k)-}, we get a
formula for $T_k(-q^{-b},q)$. 

\begin{cor}\label{cor:(-b,k)-}
For $b\geq1$ and $k\geq0$, we have
\begin{multline*}
T_k(-q^{-b},q) = 
\sum_{i=0}^{k-1} (-q^{1-b};q)_b (-q)^{i(2k-2b-i+2)}
\Qbinom{b+i-1}{i}{q^2} \\
+(-q)^{k^2+2k-2kb} \sum_{i=0}^{b-1} 
(-q^{1-b};q)_i q^{2i} \Qbinom{k+i-1}{i}{q^2}.
\end{multline*}
\end{cor}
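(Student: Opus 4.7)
The plan is to derive this formula as a one-line specialization of the unlabeled theorem immediately preceding it. I would substitute $\epsilon = -$ into that theorem, so that its left-hand side becomes $\beta_-(b,k) = T_k(-q^{-b}, q)$ by the definition of $\bep$. The only external ingredient required is the initial value $\beta_-(0, n) = T_n(-1, q) = 1$ recorded in equation \eqref{eq:(0,k)-}; this collapses each factor $\bep(0, k-i)$ appearing in the first sum of the theorem to $1$, leaving exactly the first sum of the corollary.

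For the second sum, the master formula already contains no $\bep$-values, so only a cosmetic rearrangement of the prefactor on $(-q)$ is needed. Writing $k(k-2b+2) + 2i = (k^2 + 2k - 2kb) + 2i$ and using $(-q)^{2i} = q^{2i}$ pulls the $i$-independent part out of the sum:
\[
(-q)^{k(k-2b+2)+2i} = (-q)^{k^2 + 2k - 2kb} \cdot q^{2i},
\]
which matches the prefactor displayed in the corollary.

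There is essentially no obstacle here: all the real work, namely the lattice-path enumeration carried out in Lemmas~\ref{lem:(0,n)'} and \ref{lem:(m,0)'} and the recurrence argument behind the master theorem, has already been done. The hypotheses $b \geq 1$ and $k \geq 0$ keep us inside the domain $(b,k)\neq(0,0)$ of the master theorem, and at the boundary $k = 0$ the first sum is empty while the second contracts under the convention $\Qbinom{-1}{0}{q^2} = 1$ to give $T_0(-q^{-b}, q) = 1$, consistent with \eqref{eq:initial k=0}.
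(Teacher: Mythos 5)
Your proposal is correct and is exactly the paper's own derivation: the paper obtains this corollary by setting $\epsilon=-$ in the unlabeled theorem for $\bep(b,k)$ and invoking $\beta_-(0,n)=T_n(-1,q)=1$ from \eqref{eq:(0,k)-}, with the same trivial rewriting of the exponent $k(k-2b+2)+2i$. Your extra check of the boundary case $k=0$ is a harmless bonus not present in the paper.
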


If $b=1$ in Corollary~\ref{cor:(-b,k)-}, we get
\begin{align*}
T_k(-1/q,q) &= 2\sum_{i=0}^{k-1} (-q)^{i(2k-i)} + (-q)^{k^2}\\
&= (-q)^{k^2} + 2\sum_{i=1}^{k} (-q)^{(k-i)(k+i)}\\
&=(-q)^{k^2} \sum_{i=-k}^{k} (-q)^{-i^2},
\end{align*}
which together with Theorem~\ref{thm:main2} implies the following formula for
$E_n(-1/q,q)$.

\begin{prop}\label{thm:-1/q}
We have
\[ E_n(-1/q,q) = \frac{1}{(1-q)^{2n}}
\sum_{k=0}^n \left( \binom{2n}{n-k} - \binom{2n}{n-k-1} \right)
\sum_{i=-k}^k (-q)^{i^2}.
\]
\end{prop}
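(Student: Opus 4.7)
The plan is to combine Theorem~\ref{thm:main2} (applied with $t=-1/q$) with the closed form for $T_k(-1/q,q)$ that comes out of Corollary~\ref{cor:(-b,k)-} at $b=1$. This is essentially the same strategy used for Proposition~\ref{thm:-q}, and indeed the computation of $T_k(-1/q,q)$ is already carried out in the paragraph immediately preceding the statement; all that remains is to feed this into Theorem~\ref{thm:main2} and reconcile the resulting powers of $q$ and signs.

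First I would record the identity $T_k(-1/q,q)=(-q)^{k^2}\sum_{i=-k}^{k}(-q)^{-i^2}$ obtained above. The quickest way to derive it is to set $b=1$ in Corollary~\ref{cor:(-b,k)-}, which collapses the Gaussian binomials $\qbin{i}{i}{q^2}$ and $\qbin{k-1}{0}{q^2}$ to $1$ and reduces the Pochhammer factors $(-1;q)_1=2$ and $(-1;q)_0=1$, leaving $T_k(-1/q,q)=2\sum_{i=0}^{k-1}(-q)^{i(2k-i)}+(-q)^{k^2}$. The change of index $i\mapsto k-i$ rewrites the first sum as $\sum_{i=1}^{k}(-q)^{k^2-i^2}$, and pulling out $(-q)^{k^2}$ and symmetrising about $i=0$ gives the stated expression.

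Next I would specialise Theorem~\ref{thm:main2} at $t=-1/q$, which yields
\[
E_n(-1/q,q)=\frac{1}{(1-q)^{2n}}\sum_{k=0}^{n}\left(\binom{2n}{n-k}-\binom{2n}{n-k-1}\right)(-1/q)^{k}q^{k(k+1)}T_k(-q,q^{-1}).
\]
To evaluate $T_k(-q,q^{-1})$ I would simply substitute $q\mapsto q^{-1}$ in the closed form for $T_k(-1/q,q)$ found in the previous step, obtaining
\[
T_k(-q,q^{-1})=(-q^{-1})^{k^{2}}\sum_{i=-k}^{k}(-q^{-1})^{-i^{2}}=(-1)^{k}q^{-k^{2}}\sum_{i=-k}^{k}(-q)^{i^{2}},
\]
where I use $(-1)^{k^{2}}=(-1)^{k}$ and $(-1)^{-i^{2}}q^{i^{2}}=(-q)^{i^{2}}$.

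Finally, I would collect the prefactor:
\[
(-1/q)^{k}q^{k(k+1)}\cdot(-1)^{k}q^{-k^{2}}=(-1)^{2k}q^{-k+k^{2}+k-k^{2}}=1,
\]
so substituting back yields exactly the desired formula. The only conceivable obstacle is bookkeeping, namely the simultaneous handling of the $q\mapsto q^{-1}$ substitution and the signs coming from $(-1)^{k^{2}}$ and $(-1)^{-i^{2}}$; once these are handled carefully the claim follows immediately, with no new combinatorial input beyond Corollary~\ref{cor:(-b,k)-} and Theorem~\ref{thm:main2}.
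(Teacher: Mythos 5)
Your proposal is correct and follows essentially the same route as the paper: the paper derives $T_k(-1/q,q)=(-q)^{k^2}\sum_{i=-k}^{k}(-q)^{-i^2}$ from Corollary~\ref{cor:(-b,k)-} with $b=1$ in the paragraph immediately preceding the proposition and then substitutes it into Theorem~\ref{thm:main2} exactly as you do, mirroring the explicit computation given for Proposition~\ref{thm:-q}. Your bookkeeping (the identities $(-1)^{k^2}=(-1)^k$, $(-q^{-1})^{-i^2}=(-q)^{i^2}$, and the cancellation of the prefactor $(-1/q)^k q^{k(k+1)}\cdot(-1)^k q^{-k^2}=1$) all checks out.
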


We note that Proposition~\ref{thm:-1/q} was first discovered by \JV. (personal
communication).

\section{The original formula of \JV. for $E_n(q)$}
\label{sec:jv.-orig-form}

The original formula for $E_n(q)$ in \cite{JV2010} is the following:
\begin{equation}
  \label{eq:JV0}
E_{2n}(q) = \frac{1}{(1-q)^{2n}} 
 \sum_{k=0}^n \left( \binom{2n}{n-k} - \binom{2n}{n-k-1} \right)
 \sum_{i=0}^{2k} (-1)^{i+k} q^{i(2k-i)+k}, 
\end{equation}
\begin{equation}
  \label{eq:JV1}
 E_{2n+1}(q) = \frac{1}{(1-q)^{2n+1}} 
 \sum_{k=0}^n \left( \binom{2n+1}{n-k} - \binom{2n+1}{n-k-1} \right)
 \sum_{i=0}^{2k+1} (-1)^{i+k} q^{i(2k+2-i)}.  
\end{equation}

In this section we prove that \eqref{eq:seq} and \eqref{eq:tan} are equivalent
to \eqref{eq:JV0} and \eqref{eq:JV1} respectively.  By changing the index $i$
with $i+k$ in \eqref{eq:JV0} we obtain \eqref{eq:seq}.  For the second identity,
let
\[
f(k) = \frac{1}{1-q} \sum_{i=0}^{2k+1} (-1)^{i+k} q^{i(2k+2-i)}.
\]
Using Pascal's identity, we obtain
that $(1-q)^{2n} E_{2n+1}(q)$ is equal to
  \begin{align*}
&  \sum_{k=0}^n \left( \binom{2n+1}{n-k} - \binom{2n+1}{n-k-1} \right) f(k)\\
=&  \sum_{k=0}^n \left( \binom{2n}{n-k-1} - \binom{2n}{n-k-2} \right) f(k)
+\sum_{k=0}^n \left( \binom{2n}{n-k} - \binom{2n}{n-k-1} \right) f(k) \\
=&  \sum_{k=1}^{n+1} \left( \binom{2n}{n-k} - \binom{2n}{n-k-1} \right) f(k-1)
+\sum_{k=0}^n \left( \binom{2n}{n-k} - \binom{2n}{n-k-1} \right) f(k).
\end{align*}
Since $\left( \binom{2n}{n-k} - \binom{2n}{n-k-1} \right) f(k-1)=0$ when $k=0$
and $k=n+1$, we have
\begin{equation}
  \label{eq:1}
E_{2n+1}(q) = \frac{1}{(1-q)^{2n}} 
\sum_{k=0}^n \left( \binom{2n}{n-k} - \binom{2n}{n-k-1} \right) \big(f(k)+f(k-1)\big).
\end{equation}
Thus in order to get \eqref{eq:tan} it suffices to show $f(k)+f(k-1)=q^{k(k+2)}
A_k(q^{-1})$. Since
\begin{align*}
(1-q) f(k) &= \sum_{i=0}^{2k+1} (-1)^{i+k} q^{i(2k+2-i)}  
= \sum_{i=-(k+1)}^{k} (-1)^{i+2k+1} q^{(i+k+1)(k+1-i)}  \\
&= - q^{(k+1)^2}\sum_{i=-(k+1)}^{k} (-q)^{-i^2}  
= (-1)^k - q^{(k+1)^2}\sum_{i=-k}^{k} (-q)^{-i^2},
\end{align*}
we have $f(k)+f(k-1)=1$ if $k=0$, and for $k\geq1$, 
\[
f(k)+f(k-1) = - \frac{q^{(k+1)^2}}{1-q}\sum_{i=-k}^{k} (-q)^{-i^2}
- \frac{q^{k^2}}{1-q}\sum_{i=-(k-1)}^{k-1} (-q)^{-i^2},
\]
which is easily seen to be equal to $q^{k(k+2)}A_k(q^{-1})$.  Thus we get
\eqref{eq:tan}.

\section{Concluding remarks}
\label{sec:concluding-remarks}

In this paper we have found a formula for the coefficient $E_n(t,q)$ of $x^n$ in
the continued fraction
\[
\cfrac{1}{
    1- \cfrac{\qint{1}\Qint{1}{t,q} x}{
       1- \cfrac{\qint{2}\Qint{2}{t,q} x}{\cdots}}}.
\]

Since $E_n(t,q)$ is a generalization of the $q$-Euler number, it is natural to
consider a similar generalization of \eqref{eq:3}. Thus we propose the following
problem. 

\begin{problem}
Find a formula for the coefficient of $x^n$ in the following continued fraction:
\[
\cfrac{1}{
  1- \cfrac{\Qint{1}{t,q} x}{
     1- \cfrac{\Qint{2}{t,q} x}{\cdots}}}. 
\]
\end{problem}

Also, we can consider a generalization of $E_n(t,q)$ as follows.

\begin{problem}
Find a formula for the coefficient of $x^n$ in the following continued fraction:
\[
\cfrac{1}{
    1- \cfrac{\Qint{1}{y,q}\Qint{1}{t,q} x}{
       1- \cfrac{\Qint{2}{y,q}\Qint{2}{t,q} x}{\cdots}}}.
\]
\end{problem}
 
Recently Prodinger \cite{Prodinger} expressed the continued fractions (in fact
the corresponding $T$-fractions, see \cite[Lemma~6.1]{JVKim} for the relation
between $S$-fractions and $T$-fractions) in the above two problems as fractions
of formal power series when both $y$ and $t$ are equal to $q^d$ for a positive
integer $d$.  From another result of Prodinger \cite[Section~11]{Prodinger}, one
can obtain the following formula for $T_k(q^b,q)$ for a positive integer $b$:
\begin{equation}
  \label{eq:prodinger}
T_k(q^b,q) = \sum_{i=0}^b q^{\binom{i+1}2} \qbinom{b}{i}
\sum_{j=-k}^{k-i} (-1)^j q^{j^2+i(k+j)} \qbinom{k+j+b}b.
\end{equation}

\begin{problem}
  Find a direct proof of the equivalence of \eqref{eq:++b} and
  \eqref{eq:prodinger}.
\end{problem}

In the introduction we have two formulas \eqref{eq:JV_mu} and
Corollary~\ref{cor:Tk} for $E_n(t,q)$.  Using hypergeometric series Kim and
Stanton \cite{KimStanton} showed that these are equivalent and simplified to the
following formula:
\begin{equation}
  \label{eq:KS}
E_n(t,q) = \frac{1}{(1-q)^{2n}}
\sum_{k=0}^n \left( \binom{2n}{n-k} - \binom{2n}{n-k-1} \right) (-q)^k
\sum_{i=0}^{k} t^i q^{\binom{k-i}{2}} (q;q^2)_i \qbinom{k+i}{k-i}. 
\end{equation}
 
Han et al.~\cite{Han1999} introduced the polynomials $P_n^{\alpha}(x,q)$ defined
by $P_0^{\alpha}(x,q)=1$ and
\[
P_n^{\alpha}(x,q) = [x,a]_q 
\frac{[x,b]_q P_{n-1}^{\alpha}([x,c]_q,q) - [x,d]_q P_{n-1}^{\alpha}(x,q)}
{1+(q-1)x},
\]
where $\alpha=(a,b,c,d)$ is a tuple of nonnegative integers and
$[x,n]_q = xq^n + [n]_q$. They proved that
\[
\sum_{n\ge0} P_n^{\alpha}(x,q) z^n
=
\cfrac{1}{
  1-\cfrac{q^d[b-d]_q[x,a]_q z}{
    1- \cfrac{q^a[c]_q[x,b]_qz}{
      1-\cfrac{q^d[b-d+c]_q[x,a+c]_qz}{
        1-\cfrac{q^a[2c]_q[x,b+c]_qz}{
          \cdots}}}}}.
\]
One can easily check that $E_n(t,q)=P_{n}^{(0,1,2,0)} ([1]_{t,q},q)$.  Thus as a
special case of \cite[Proposition~1]{Han1999} we have
\[
\sum_{n\ge0} E_n(t,q) z^n = 
\sum_{m\ge0} \frac{tq^{2m+1} [2m]_{t,q}!}
{\prod_{i=0}^m (tq^{2i+1} + [2i+1]_{t,q}^2 z)} z^m.
\]
Using the idea in the last section of \cite{Shin2010} Zeng proved
the following formula (personal communication):
\begin{equation}
  \label{eq:zeng}
E_n(t,q) = t^{-n} 
\sum_{m=0}^n \sum_{i=0}^m (-1)^{n-i}
\frac{q^{2m-2in+i^2-n-i} [2m]_{t,q}! [2i+1]_{t,q}^{2n}}
{[2i]_q!! [2m-2i]_q!! \prod_{k=0,k\ne i}^m [2k+2i+2]_{t^2,q}},
\end{equation}
where $[2m]_{t,q}! = \prod_{i=1}^{2m} [i]_{t,q}$ and
$[2i]_q!! = \prod_{k=1}^i [2k]_q$. 

\begin{problem}
  Find a direct proof of the equivalence of \eqref{eq:KS} and \eqref{eq:zeng}.
\end{problem}

\section*{Acknowledgement}
I am grateful to Dennis Stanton for helpful discussions especially on the idea
of the proof of Lemma~\ref{lem:dist}. I also thank Matthieu \JV. for
helpful comments and Jiang Zeng for the formula \eqref{eq:zeng}.

\end{document}